\theoremstyle{definition}
\newtheorem{theo+}           {Theorem}
\newtheorem{prop+}[theo+]    {Proposition}
\newtheorem{coro+}[theo+]            {Corollary}
\newtheorem{lemm+}[theo+]            {Lemma}
\newtheorem{conjecture}[theo+]       {Conjecture}
\newtheorem{defi+}           {Definition}
\newtheorem{problem}         {Problem}
\newtheorem*{ack}            {Acknowledgement}
\newtheorem*{not+}            {Notation}
\newtheorem{rema+}           {Remark}
\newtheorem{example}         {Example}
\newenvironment{theorem}{\begin{theo+}}{\end{theo+}}
\newenvironment{proposition}{\begin{prop+}}{\end{prop+}}
\newenvironment{corollary}{\begin{coro+}}{\end{coro+}}
\newenvironment{lemma}{\begin{lemm+}}{\end{lemm+}}
\newenvironment{remark}{\begin{rema+}}{\end{rema+}}
\newcommand{\coorshifting}{
\coordinate (AA) at ($(A)+(S1)$);
\coordinate (AAp) at ($(Ap)+(S1)$);
\coordinate (BB) at ($(B)+(S1)$);
\coordinate (BBp) at ($(Bp)+(S1)$);
\coordinate (CC) at ($(C)+(S1)$);
\coordinate (CCp) at ($(Cp)+(S1)$);
}
\newcommand{\lan}{\langle}
\newcommand{\ran}{\rangle}
\newcommand{\bL}{\mathbb {L}}
\newcommand{\bR}{\mathbb R}
\newcommand{\RR}{\mathbb R}
\newcommand{\bZ}{\mathbb Z}
\newcommand{\Verti}{\mathcal{V}}
\newcommand{\Vol}{\mathrm{Vol}}
\newcommand{\conv}{\mathrm{conv}}
\newcommand{\Boxe}{\text{Box}_\varepsilon}
\newcommand{\B}{\mathcal B}
\newcommand {\cP} {\mathcal P}
\newcommand{\cS}{\mathfrak S}
\newcommand{\cT}{\mathcal{T}}
\newcommand{\De}{\Delta}
\newcommand{\eps}{\epsilon}
\newcommand{\veps}{\varepsilon}
\newcommand{\U}{\mathbf u}
\newcommand{\bfe}{\mathbf e}
\newcommand{\bi}{\mathbf i}
\newcommand{\bm}{\mathbf m}
\newcommand{\bu}{\mathbf u}
\newcommand{\bv}{\mathbf v}
\newcommand{\bw}{\mathbf w}
\newcommand{\bW}{\mathbf W}
\newcommand{\bx}{\mathbf x}
\newcommand{\x}{\mathbf {x}}
\newcommand{\z}{\mathbf {z}}
\newcommand {\LL}{\mathfrak L}
\newcommand{\M}{\mathfrak M}
\newcommand{\F}{\mathfrak F}
\newcommand{\Rat}{\mathfrak {Rat}}
\numberwithin{equation}{section}
\begin{document}

\title[On  moments of a polytope]
{On  moments of a polytope}

\author [N.~Gravin] {Nick Gravin}

\address {Shanghai University of Finance and Economics, 
100 Wudong Road, Yangpu district, Shanghai, China}
\email{nikolai@mail.shufe.edu.cn}

\author [D.V.~Pasechnik] {Dmitrii V.  Pasechnik}

\address {Department of Computer Science,
University of Oxford, Wolfson Building, Parks Road, Oxford OX1 3QD, UK}
\email{dimpase@cs.ox.ac.uk}

\author[B.~Shapiro]{Boris Shapiro}
\address{Department of Mathematics, Stockholm University, SE-106 91 Stockholm,
      Sweden}
\email{shapiro@math.su.se}

\noindent
\author[M.~Shapiro]{Michael Shapiro}
\noindent
\address{Department of Mathematics, Michigan State University, East
Lansing, MI 48824-1027, USA}
\email{mshapiro@math.msu.edu}

\date{\today}

\keywords{moments of a polytope, generating function}
\subjclass[2010]{Primary 44A60; Secondary 31B20}

\begin{abstract}  
We show that the multivariate generating function of
appropriately normalized moments of a measure with homogeneous polynomial
density supported on a compact polytope $\cP\subset \bR^d$ 
is a rational function.  Its denominator is the product
of linear forms dual to the vertices of $\cP$ raised  to the power
equal to the degree of the density function. 
 %The used normalization  of moments is connected with a special case of  the Fantappi\`{e}  transform of the weight. 
Using this, we solve the inverse moment problem for the set of, not necessarily convex, 
polytopes having a given  set $S$ of vertices. 
Under a weak non-degeneracy assumption we also show that the uniform measure supported on any such 
polytope is a linear combination of uniform measures supported on simplices with vertices in $S$.
\end{abstract}

\dedicatory{To the memory of Mikael Passare}

\maketitle

\section{Introduction}\label{s1}

The initial motivation for the present paper came from proposed in \cite{GLPR} efficient algorithm
recovering an arbitrary  convex polytope from axial moments of a polynomial measure supported on it. 
This algorithm is based on  the formulas for the axial moments of polytopes found over 20 years ago independently 
by M.~Brion, J.~Lawrence, A.~Khovanskii-A.~Pukhlikov, and A.~Barvinok \cites{Bri,La,MR1190788,Bar2}, 
see \cites{MR2455889,BR} for accessible explanation.  
In  \cite{GLPR}  the authors made an essential, although implicit, 
use of a univariate rational generating function for 
appropriately normalized axial moments.
Here a multivariate, and explicit, analog of the latter function is developed. It turns out it provides a very convenient
encoding of non-convex polytopes, which is of independent interest. E.g. it leads to a natural definition of
\emph{vertices} of such non-convex polytopes, which have similar  properties to vertices of convex polytopes.
It also allows to find the \emph{exact solutions} of a class of inverse moment problems on non-convex polytopes.

After the first version \cite{GPSS12} of this text was released in 2012,
it was pointed out to us by Prof. Mich{\`e}le Vergne
that Laplace transform techniques developed for studying hyperplane 
arrangements in \cite{BrVe99} 
simplify and strengthen a number of
our results. We discuss this in the Section~\ref{subs:arr},
while leaving
full details for another publication.

%(Leaving an interested
%reader to  explore new features of Google Scholar    we do not
%attempt in this short note to give an overview of  the  vast
%field of  the classical inverse moment problem  going back  to H.~Poincar\'e.) The main result
%of the present paper is a simple formula for the multivariate generating
%function of all moments of an arbitrary polytope w.r.t. to an arbitrary
%homogeneous polynomial weight function which generalizes %To the best of our knowledge it is the first
%one of its kind giving all moments for a sufficiently large class of bodies
%and/or weight functions.

\begin{not+}
%Let $\cP\subset \bR^d$
%denote an arbitrary  compact $d$-dimensional polytope not necessarily convex or
%connected.  (We assume that $\bR^d$ is endowed with the standard scalar product
%$\lan  . , . \ran $ and with  fixed orthonormal coordinates  $(x_1,...,x_d)$.)
%Let $\rho(x_1,...,x_d)$ be an arbitrary non-trivial homogeneous polynomial of
%some degree $\delta$. 
In what follows we shall always assume that $\bR^d$ is endowed  with a fixed coordinate system $(x_1,...,x_d)$,  
orthonormal with respect to the standard scalar product $\lan \cdot, \cdot \ran$.  Let $\mu$ be a finite complex-valued Borel measure in $\bR^d$. (For standard measure-theoretic notions we follow \cite{MR924157}.) 
Given a multiindex $I=(i_1,\dots,i_d)$, let $\x^I$ be the shorthand of the 
monomial $x_1^{i_1}\dots x_d^{i_d}$ and $|I|$ the shorthand for $i_1+\dots +i_d$.    
For any multiindex  $I$, define the  {\em moment} $m_I(\mu)$ of $\mu$  as
\begin{equation}\label{eq:moment}
%m_{i_1,...,i_d}^\rho(\cP)
m_I(\mu):=\int_{\bR^d}x_1^{i_1}x_2^{i_2}...x_d^{i_d}d\mu(x_1,x_2,\ldots, x_d)=\int_{\bR^d}{\bx}^I  d\mu(\bx).
\end{equation}
%Given a vector $\Xi=(\xi_1,...,\xi_d)\in \bR^d$ we call  the function $$L_\Xi(u_1,...u_d)=1-\lan \Xi,U\ran =1-\xi_1u_1+...+\xi_du_d$$ the {\em linear form dual to $\Xi$}. (By definition, if $\Xi=0$ then $L_\Xi=1$.)

Define the {\em normalized moment generating function} $F_{\mu}(\U)=F_{\mu}(u_1,\dots,u_d)$ of $\mu$  by 
\begin{equation}\label{eq:nmomgf}
F_{\mu}(\U):=\sum_{I:=(i_1,\dots,i_d)\geq 0}\frac{(|I|+d)!}{i_1!\cdots i_d!} 
m_{I}(\mu) \U^I,\quad\text{where $\U^I=u_1^{i_1}\dots u_d^{i_d}$.}
\end{equation}

Note that $F_{\mu}(\bu)$ admits the  integral representation 
\begin{equation} \label{eq:Mainint}
F_{\mu}(\bu)=d!\int_{\bR^d}\frac{d\mu(\x)}{(1-\lan \x,\U\ran)^{d+1}},
\end{equation}
which is a special case of a \emph{Fantappi\`{e}  transformation}. For details on the latter, see e.g.
\cite{APS}*{Chapter~3}.
 A proof of \eqref{eq:Mainint} will be  given at the end of Section~\ref{s2}; see also Remark~\ref{rem:Fanta}. 

%\begin{rema+}
%The definitions of  $m_I^\rho(\cP)$ and $F_{\cP}^\rho(\U)$ as well as relation~\eqref{eq:Mainint} 
%remain valid for arbitrary compact sets  $\cP\subset\RR^d$. They can be also extended to a larger class of measurable sets but we do not need this extension  for the purposes of the present paper. 
%The same is also true for \eqref{eq:Mainint},
%if $\cP$ is, in addition, compact.
%\end{rema+}

Given any complex-valued finite measure $\mu$ 
and any degree $\delta$ homogeneous $d$-variate polynomial $\rho$, 
it is convenient to define the (re)normalized moment
generating function $F_\mu^\rho(\bu)$ for the measure $\rho\mu$, where
by definition, $\int_{\bR^d}f d(\rho\mu)=\int_{\bR^d} f \rho d\mu$,
in such a way that it can be obtained  from  $F_\mu(u)$ by application of the 
differential operator $\rho\left (\frac{\partial}{\partial \bu}\right)$. Namely, set
\begin{equation}\label{eq:nmomgfrho}
F_{\mu}^\rho(\bu):=\sum_{I:=(i_1,...,i_d) \ge 0}\frac{(|I|+d+\delta)!}{i_1!\cdots i_d!}
m_{I}(\rho\mu) \bu^I.
\end{equation}
Note that $F_{\mu}^\rho(\bu)\neq F_{\rho\mu}(\bu)$ for non-constant $\rho$. 
However, they are also connected, by an 
explicit differential operator as follows. 
\begin{theorem}\label{th:weight} 
For any complex-valued finite measure $\mu$ and any homogeneous polynomial $\rho$ of degree $\delta$, 
\begin{align} \label{eq:gen_i}
F_{\mu}^\rho (\bu)&=
\prod_{\ell=d}^{d+\delta-1}\left(\sum_k u_k\frac{\partial}{\partial u_k}+\ell\right)\circ F_{\rho\mu}(\bu)\\
\label{eq:gen_ii}
&=\rho\left (\frac{\partial}{\partial \bu}\right)\circ F_{\mu}(\bu)\\
\label{eq:gen_iii}
&=(d+\delta)!\int_{\bR^d}\frac{\rho(\x)d\mu(\x)}{(1-\lan \x,\U\ran)^{d+\delta+1}}.
\end{align}
\end{theorem}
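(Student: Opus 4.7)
The plan is to establish \eqref{eq:gen_iii} first directly from the defining series, then to derive \eqref{eq:gen_ii} by differentiating under the integral sign in \eqref{eq:Mainint}, and finally to obtain \eqref{eq:gen_i} by a dual Euler-operator computation. All three steps use \eqref{eq:Mainint} as a common anchor.

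For \eqref{eq:gen_iii}, I would expand the integrand via the standard binomial-then-multinomial identity
\[
(1-\lan\x,\U\ran)^{-(d+\delta+1)}=\sum_{I\geq 0}\frac{(|I|+d+\delta)!}{(d+\delta)!\,i_1!\cdots i_d!}\,\x^I\bu^I,
\]
obtained by combining $(1-s)^{-(d+\delta+1)}=\sum_m\binom{m+d+\delta}{m}s^m$ with the multinomial expansion of $\lan\x,\U\ran^m$. Multiplying by $(d+\delta)!\,\rho(\x)\,d\mu(\x)$ and integrating termwise reproduces the defining series \eqref{eq:nmomgfrho} of $F_\mu^\rho$. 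For \eqref{eq:gen_ii}, I would differentiate \eqref{eq:Mainint} under the integral sign. A direct induction from $\partial_{u_k}(1-\lan\x,\U\ran)^{-n} = n\,x_k\,(1-\lan\x,\U\ran)^{-(n+1)}$, applied successively in each variable, yields for every multi-index $J$ with $|J|=\delta$
\[
\Bigl(\prod_k \tfrac{\partial^{j_k}}{\partial u_k^{j_k}}\Bigr)(1-\lan\x,\U\ran)^{-(d+1)}=\tfrac{(d+\delta)!}{d!}\,\x^J\,(1-\lan\x,\U\ran)^{-(d+\delta+1)}.
\]
Extending linearly in $J$ to arbitrary homogeneous $\rho$ of degree $\delta$ and inserting into \eqref{eq:Mainint} turns the prefactor $d!$ into $(d+\delta)!$ and reproduces the integrand of \eqref{eq:gen_iii}.

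Equation \eqref{eq:gen_i} then follows from a parallel Euler-operator computation. With $E=\sum_k u_k\,\partial/\partial u_k$, a short calculation using $E(1-s)^{-(n+1)}=(n+1)s(1-s)^{-(n+2)}$ together with $s=1-(1-s)$ gives $(E+n+1)(1-s)^{-(n+1)}=(n+1)(1-s)^{-(n+2)}$. Iterating this identity $\delta$ times inside the integral form of $F_{\rho\mu}$ advances the exponent from $d+1$ to $d+\delta+1$ while accumulating the factor $(d+\delta)!/d!$, producing $F_\mu^\rho$ by \eqref{eq:gen_iii}. Equivalently, on the series side $(E+\ell)\bu^I=(|I|+\ell)\bu^I$, so the product over $\ell$ transforms the coefficient $(|I|+d)!$ into $(|I|+d+\delta)!$. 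None of the three steps is substantive in itself: the work is pure bookkeeping with factorials and multinomial coefficients, and the only point requiring technical care is the interchange of differentiation, summation, and integration. This is standard on a sufficiently small polydisc around $\U=0$, where the defining series for $F_\mu$ converges absolutely (in particular when $\mu$ is compactly supported, the setting of the applications later in the paper).
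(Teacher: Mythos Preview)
Your proof is correct and essentially mirrors the paper's: both pivot on the Euler-operator identity $(E+\ell)(1-\lan\x,\bu\ran)^{-\ell}=\ell(1-\lan\x,\bu\ran)^{-\ell-1}$ (which the paper isolates as a separate lemma) combined with routine series and integral bookkeeping. The only difference is the order: the paper first proves \eqref{eq:gen_ii} by differentiating the defining series of $F_\mu$ term by term for monomial $\rho$, and then obtains \eqref{eq:gen_i} and \eqref{eq:gen_iii} by applying the Euler operators to the series and integral representations of $F_{\rho\mu}$, whereas you anchor everything at \eqref{eq:gen_iii} via direct expansion of the integrand.
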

\end{not+}
Here and in what follows $\circ$ denotes the application of a differential operator to a function.
The proof of the latter result is basically an exercise in manipulating formal power series, 
and we do not claim its novelty.
For the sake of completeness, we include a proof in Section~\ref{s2}.

\subsection*{Results on convex polytopes.}
A finite set $S\subset \bR^d$ is called {\it spanning} if it is not contained in any (affine)
hyperplane in $\bR^d$.  (Obviously, $\text{card}(S)\ge d+1$.)
As usual, by a (compact, convex) \emph{polytope} $\cP\subset \bR^d$ we  mean the convex hull of a finite 
spanning set in  $\bR^d$.   The set of vertices of a convex polytope $\cP$ is the inclusion-minimal finite set with convex hull $\cP$. %(Obviously, the set of vertices of a convex $\cP$ is uniquely defined.) 
A {\it $d$-simplex} in $\bR^d$ is the convex hull of a spanning $(d+1)$-tuple of  points.  By an {\it open polytope (resp. simplex)} we mean the set of interior points of a compact polytope (resp. simplex).  

Given a convex polytope $\cP$ let $\Verti=(\bv_1,...,\bv_N)$ denote the set of its vertices.  
Assume that $\cP$ is  simple, i.e. each $\bv\in \Verti$ 
has exactly $d$ incident edges $\bv\bv_{e_1}$, \dots, $\bv\bv_{e_d}$. 
Set $w_k(\bv):=\bv_{e_k}-\bv$, for $1\leq k\leq d$.
The non-negative real span $K_{\bv}$ of $w_1(\bv)$,\dots, $w_d(\bv)$  
is called {\em the tangent cone} of $\cP$ 
at $\bv$. For each $K_{\bv},$ define $|\det K_{\bv}|=|\det (w_1(\bv),\ldots ,w_d(\bv))|$ to be the
volume of the parallelepiped formed by $w_1(\bv),\ldots, w_d(\bv)$. 

Given a bounded domain $\Omega\subset\bR^d$, we call the measure 
$$\mu_\Omega=\chi_\Omega dx_1dx_2\ldots dx_d,$$
 where $\chi_\Omega$ is the characteristic function of $\Omega$,  the {\it standard measure} of $\Omega$.  

For a simple convex polytope $\cP$, we have the following explicit representation of $F_{\mu_\cP}(\bu).$
\begin{theorem}\label{th:weight1} For an arbitrary simple convex polytope $\cP$,  
\begin{align}\label{eq:Main}
F_{\cP}(\bu):=F_{\mu_{\cP}}(\bu)&=(-1)^d\sum_{\bv\in \Verti}\frac{\lan \bv,\U\ran ^d|\det K_{\bv} |}{\prod\limits_{j=1}^d\lan w_j(\bv),\U\ran }\cdot \frac{1}{1-\lan \bv,\U\ran }\\
\label{eq:Mainsimp}
&=(-1)^d\sum_{\bv\in \Verti}\frac{|\det K_{\bv} |}{\prod\limits_{j=1}^d\lan w_j(\bv),\U\ran }\cdot \frac{1}{1-\lan \bv,\U\ran }.
\end{align}
\end{theorem}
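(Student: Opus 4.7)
My plan is to compute $F_\cP(\bu)$ via the integral representation \eqref{eq:Mainint}, combined with Brion's theorem decomposing a simple polytope as a signed sum of its vertex tangent cones. Starting from $F_\cP(\bu)=d!\int_\cP(1-\lan\x,\U\ran)^{-d-1}\,d\x$, I would invoke the identity $[\cP]=\sum_{\bv\in\Verti}[K_\bv]$ in the polytope algebra of $\bR^d$, which, after integration against the kernel $(1-\lan\x,\U\ran)^{-d-1}$, yields
\[
F_\cP(\bu)\;=\;d!\sum_{\bv\in\Verti}\int_{K_\bv}\frac{d\x}{(1-\lan\x,\U\ran)^{d+1}},
\]
understood as an equality of rational functions of $\U$.

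To evaluate each cone integral, I would affinely parameterize $K_\bv=\bv+\sum_{j=1}^d t_j w_j(\bv)$ for $t_j\ge 0$, with Jacobian $|\det K_\bv|$. Writing $A=1-\lan\bv,\U\ran$ and $B_j=\lan w_j(\bv),\U\ran$, the integral becomes $|\det K_\bv|\int_{\RR_+^d}(A-\sum_j t_j B_j)^{-d-1}dt_1\cdots dt_d$, which by iterated one-dimensional integration (carried out in the chamber where each $B_j<0$ and then extended by analytic continuation) equals $\dfrac{(-1)^d|\det K_\bv|}{d!\,A\,\prod_j B_j}$. A routine induction on $d$ handles the iterated integral. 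Substituting back and multiplying by $d!$ delivers \eqref{eq:Mainsimp} directly.

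To pass from \eqref{eq:Mainsimp} to \eqref{eq:Main}, I would factor
\[
\lan\bv,\U\ran^d-1=-(1-\lan\bv,\U\ran)\bigl(1+\lan\bv,\U\ran+\cdots+\lan\bv,\U\ran^{d-1}\bigr),
\]
so that the difference between the two formulas collapses to a $\bZ$-linear combination of the quantities $\sigma_k(\U):=\sum_\bv\frac{|\det K_\bv|\lan\bv,\U\ran^k}{\prod_j\lan w_j(\bv),\U\ran}$ for $0\le k\le d-1$. These $\sigma_k$ vanish identically by the classical Brion--Lawrence identities: the function $\int_\cP e^{\lan\x,\U\ran}d\x$ is entire in $\U$, yet Brion's formula represents it as $(-1)^d\sum_\bv\frac{|\det K_\bv|e^{\lan\bv,\U\ran}}{\prod_j\lan w_j(\bv),\U\ran}$, whose Laurent expansion around $\U=0$ has leading order $-d$; consequently every homogeneous component of negative total degree must cancel, forcing $\sigma_k\equiv 0$ for $k<d$.

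The principal technical subtlety is step one: the cone decomposition is performed at the level of indicator functions modulo those of cones containing a line, and the resulting identity of integrals against $(1-\lan\x,\U\ran)^{-d-1}$ must be justified even though the individual cone integrals converge only on overlapping sub-chambers of $\U$-space. I would handle this in the standard way, either via Barvinok's valuation-theoretic framework for polyhedra or by showing directly that both sides, agreeing on a common open set where all integrals converge absolutely, extend uniquely to the same rational function of $\U$. Once this foundational step is in place, the remainder of the proof is essentially bookkeeping.
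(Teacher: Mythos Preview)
Your argument is correct but takes a genuinely different route from the paper's. The paper cites the Brion--Lawrence--Khovanskii--Pukhlikov--Barvinok formula for the axial moments $\mu_j(\z)=\int_\cP\lan\x,\z\ran^j d\x$ as a black box (their Theorem~\ref{th:BLKPB}), forms the univariate generating function $\Psi_\z(u)=\sum_{j\ge 0}\frac{(j+d)!}{j!}\mu_j(\z)u^j$, sums the resulting geometric series at each vertex to obtain \eqref{eq:Main} with $\U=u\z$, and then checks via a multinomial expansion that $\Psi_\z(u)=F_\cP(u\z)$; the passage from \eqref{eq:Main} to \eqref{eq:Mainsimp} uses the same vanishing identities \eqref{eq:Add} you invoke, but in the opposite direction. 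You instead start from the Fantappi\`e integral \eqref{eq:Mainint}, apply Brion's cone decomposition at the level of indicator functions, and compute each tangent-cone integral explicitly to land directly on \eqref{eq:Mainsimp}, reversing the bridge to \eqref{eq:Main}. Your approach is more geometric and avoids the detour through axial moments and their generating series; on the other hand, the paper's manipulation is purely algebraic once Theorem~\ref{th:BLKPB} is granted and entirely sidesteps the analytic-continuation and valuation issue you correctly flag in your last paragraph, since every series involved converges absolutely over the bounded domain $\cP$. One small caution on that point: the chambers where the individual cone integrals converge generally have empty common intersection, so the ``common open set'' variant of your justification does not work as stated; you will need to commit to the valuation-theoretic route.
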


\medskip
\begin{rema+} Instead of the explicit choice of $w_k(\bv)$ for $\bv\in\Verti$ made above, we can 
take any fixed set of non-zero vectors $w_1(\bv),\ldots,w_d(\bv)$, 
spanning the tangent cone of $\bv$ in $\cP$. This does not affect the validity of \eqref{eq:Main} and
\eqref{eq:Mainsimp}.
\end{rema+}

Theorem~\ref{th:weight1} implies
%F_{\Delta}^1(u_1,...,u_d)=  \frac  {d! \Vol (\Delta)}   {\prod_{i=1}^{d+1} L_{v_i}},
\begin{corollary}\label{cor:simplex}
Let $\Delta=\conv(\Verti)\subset \bR^d$ be an arbitrary  $d$-simplex. Then 
\begin{equation}\label{eq:simplex}
F_{\Delta}(\bu)=\frac{d!\Vol(\Delta)}{\prod\limits_{\bv\in\Verti }(1-\lan\bv,\bu\ran)}.
\end{equation}
%where $L_{v_1},\ldots , L_{v_{d+1}}$ are the linear forms dual to the vertices $v_1,\ldots , v_{d+1}$ of the simplex $\Delta$.
\end{corollary}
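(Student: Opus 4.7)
\textbf{Proof plan for Corollary~\ref{cor:simplex}.} The plan is to specialize Theorem~\ref{th:weight1} (in the form \eqref{eq:Mainsimp}) to a $d$-simplex and then collapse the resulting sum by a single partial-fraction identity. First I note that every $d$-simplex is automatically simple: each vertex $\bv_i$ is incident to exactly $d$ edges, and the tangent cone at $\bv_i$ is spanned by the vectors $w_j(\bv_i):=\bv_j-\bv_i$ for $j\neq i$, where $\Verti=\{\bv_0,\dots,\bv_d\}$. Thus Theorem~\ref{th:weight1} applies.

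Next I compute the factor $|\det K_{\bv_i}|$. By the standard formula for the volume of a simplex, $\Vol(\Delta)=\tfrac{1}{d!}|\det(\bv_1-\bv_0,\dots,\bv_d-\bv_0)|$, and an elementary row/column manipulation (subtracting one column from the others) shows that the determinant $|\det(\bv_j-\bv_i\ :\ j\neq i)|$ is independent of the choice of $i$. Hence $|\det K_{\bv_i}|=d!\,\Vol(\Delta)$ for every $i$. Substituting into \eqref{eq:Mainsimp} yields
\begin{equation*}
F_{\Delta}(\bu)=(-1)^d\,d!\,\Vol(\Delta)\sum_{i=0}^{d}\frac{1}{\prod_{j\neq i}\langle \bv_j-\bv_i,\bu\rangle}\cdot\frac{1}{1-\langle \bv_i,\bu\rangle}.
\end{equation*}

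Writing $a_i:=\langle \bv_i,\bu\rangle$, and using $\prod_{j\neq i}(a_j-a_i)=(-1)^d\prod_{j\neq i}(a_i-a_j)$, the claim reduces to the purely algebraic identity
\begin{equation*}
\sum_{i=0}^{d}\frac{1}{(1-a_i)\prod_{j\neq i}(a_i-a_j)}=\frac{1}{\prod_{i=0}^{d}(1-a_i)}.
\end{equation*}
This is exactly the partial fraction decomposition of the rational function $g(z)=1/\prod_{i=0}^{d}(z-a_i)$ evaluated at $z=1$, valid whenever the $a_i$ are pairwise distinct (a generic condition on $\bu$), and then extending to all $\bu$ by rationality.

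I do not anticipate any real obstacle here: simplicity of a simplex is free, the determinant calculation is routine linear algebra, and the sum telescopes via a textbook partial fraction identity. The only point requiring a little care is justifying the genericity argument for the identity (or, equivalently, clearing denominators to obtain a polynomial identity that holds everywhere), which is straightforward.
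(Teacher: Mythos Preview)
Your proof is correct. The overall strategy---specialize Theorem~\ref{th:weight1} to a simplex, note that $|\det K_{\bv_i}|=d!\,\Vol(\Delta)$ for every vertex, and reduce to an algebraic identity in the scalars $a_i=\langle\bv_i,\bu\rangle$---is the same as the paper's. The differences are in the details of that last reduction. The paper starts from \eqref{eq:Main}, so the identity to be verified carries the extra factors $\zeta_i^d$, and is then handled by introducing $\zeta_{d+1}:=1$ and expanding a Vandermonde-type determinant with a repeated row. You instead start from the already-simplified form \eqref{eq:Mainsimp}, which drops those powers, and then recognize the remaining sum as the partial-fraction expansion of $1/\prod_i(z-a_i)$ evaluated at $z=1$. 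Your route is a bit shorter and arguably more transparent; the paper's Vandermonde computation is of course doing the same work (Lagrange interpolation and partial fractions being two faces of the same coin), just in a more hands-on way. The genericity-then-rationality argument you sketch for passing from distinct $a_i$ to all $\bu$ is fine and matches the spirit of what the paper does implicitly.
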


\medskip
\begin{rema+} As we discovered after we proved the above results,  statements similar to   Corollary~\ref{cor:simplex}   in the complex setting can be found  in \cite{APS}*{Section 3.5}
and in particular \cite{APS}*{Corollary 3.5.6}.

A variation of \eqref{eq:simplex} also appears in \cite{BBDL}, in the context of
designing an efficient procedure for integration of polynomials
over simplices.  
\end{rema+}

Notice that an arbitrary convex polytope $\cP$ admits a triangulation which only uses
the existing vertices of $\cP$, see e.g. \cite{BR}*{Theorem~3.1}.  Applying
Corollary~\ref{cor:simplex} and Theorem~\ref{th:weight} to the sum of measures
corresponding to such a triangulation we get the following.
\begin{corollary}\label{cor:arbit}
The normalized moment generating function $F_{\cP}^\rho(\bu)$ 
of any convex polytope $\cP$ with respect to  any homogeneous
polynomial density function $\rho$ of degree $\delta$ is a rational function with denominator 
dividing
%is the product of all the linear forms dual to its vertices raised to the power $\delta$.
$$\prod_{\bv\in\Verti}(1-\lan \bv,\bu\ran)^\delta.$$
\end{corollary}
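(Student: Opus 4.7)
My approach combines a triangulation of $\cP$ with Theorem~\ref{th:weight}\eqref{eq:gen_ii} and Corollary~\ref{cor:simplex}. By \cite{BR}*{Theorem~3.1}, fix a triangulation $\cP=\bigcup_j\Delta_j$ into $d$-simplices whose vertex sets lie in $\Verti$ and that overlap only on boundary faces of Lebesgue measure zero. Then $\rho\,\mu_\cP=\sum_j\rho\,\mu_{\Delta_j}$ up to a null set, and the linearity of each coefficient in \eqref{eq:nmomgfrho} in the underlying measure gives $F^\rho_\cP(\bu)=\sum_j F^\rho_{\Delta_j}(\bu)$.

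For each simplex, Corollary~\ref{cor:simplex} supplies
$$F_{\Delta_j}(\bu)=\frac{d!\,\Vol(\Delta_j)}{\prod_{\bv\in\Verti(\Delta_j)}(1-\lan\bv,\bu\ran)},$$
whose denominator has each vertex-factor with multiplicity $1$. Formula \eqref{eq:gen_ii} then yields $F^\rho_{\Delta_j}(\bu)=\rho(\partial/\partial\bu)\circ F_{\Delta_j}(\bu)$, so each $F^\rho_{\Delta_j}$ is rational, and hence so is the sum $F^\rho_\cP$, with a denominator composed from the linear forms $(1-\lan\bv,\bu\ran)$ for $\bv\in\Verti$.

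The quantitative step is to bound those multiplicities. A first-order partial derivative acting on $N(\bu)/\prod_i L_i(\bu)^{k_i}$, with distinct linear forms $L_i$, can raise at most one exponent $k_i$ by $1$ (quotient rule). Composing the $\delta$ derivatives built into $\rho(\partial/\partial\bu)$ therefore raises each multiplicity from its starting value $1$ to at most $\delta+1$, so the natural line of argument yields denominator dividing $\prod_{\bv\in\Verti}(1-\lan\bv,\bu\ran)^{\delta+1}$.

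The main obstacle is closing the gap between $\delta+1$ and the stated exponent $\delta$. The one-dimensional test case $d=1$, $\cP=[0,1]$, $\rho(x)=x$ evaluates directly to $F^\rho_\cP(u)=1/(1-u)^2$, saturating multiplicity $\delta+1=2$ at the vertex $\bv=1$; thus the multiplicity bound extracted from the triangulation-plus-differentiation argument is already tight and cannot be improved on its own. Reaching the stated $\delta$ would require either a genuinely different technique (for example, an algebraic cancellation obtained by Taylor-expanding $\rho$ about each vertex and using $(1-\lan\bx,\bu\ran)=(1-\lan\bv,\bu\ran)-\lan\bx-\bv,\bu\ran$ to absorb one power of $(1-\lan\bv,\bu\ran)$ for each Taylor monomial $(\bx-\bv)^\beta$), or else an alternative normalization in \eqref{eq:nmomgfrho} that shifts \eqref{eq:gen_ii} to an operator of order $\delta-1$; absent such a refinement, I would present the proof at the $\delta+1$ ceiling and flag the exponent in the statement for revision.
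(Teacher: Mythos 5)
Your argument is the paper's own proof: the entire justification given for this corollary is the sentence preceding it---triangulate $\cP$ using only its existing vertices (via \cite{BR}*{Theorem~3.1}) and apply Corollary~\ref{cor:simplex} together with Theorem~\ref{th:weight} to the sum of the simplex measures---which is exactly your decomposition $F^\rho_\cP=\sum_j F^\rho_{\Delta_j}$ followed by \eqref{eq:gen_ii}. You are also right to balk at the exponent. Carried out carefully, this route yields a denominator dividing $\prod_{\bv\in\Verti}(1-\lan\bv,\bu\ran)^{\delta+1}$, not $(\,\cdot\,)^{\delta}$, and your test case is a genuine counterexample to the statement as printed: for $\cP=[0,1]$ and $\rho(x)=x$ one has
\[
F^{\rho}_{\cP}(u)=\sum_{i\ge 0}\frac{(i+2)!}{i!}\cdot\frac{1}{i+2}\,u^{i}=\sum_{i\ge 0}(i+1)u^{i}=\frac{1}{(1-u)^{2}},
\]
whose denominator does not divide $(1-u)^{\delta}=(1-u)$. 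The case $\delta=0$ already contradicts the printed bound, since it would force $F_{\cP}$ to be a polynomial, against Corollary~\ref{cor:simplex}; and the paper itself, in the proof of Corollary~\ref{cor:generic}, uses the $\delta=0$ case with exponent $1$. The same off-by-one appears in the abstract and in Proposition~\ref{prop:gener}. So there is no hidden cancellation for you to find and no alternative normalization to invoke: the correct exponent is $\delta+1$, your example shows it is attained, and your decision to prove the $\delta+1$ bound and flag the stated exponent for correction is the right outcome rather than a gap in your argument.
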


\begin{example}\label{ex:one} 
Let $\Delta$ be a triangle in $\bR^2$ with vertices $v_1=(1,1),\;v_2=(2,5)$ and $v_3=(3,2)$. 
Its normalized moment generating function equals
$$F_\Delta(u_1,u_2)=\frac{7}{(1-u_1-u_2)(1-2u_1-5u_2)(1-3u_1-2u_2)}.$$
Its Taylor expansion about the origin up to the terms of degree $7$ is given by
\begingroup
\everymath{\scriptstyle}
\scriptsize
\begin{multline*}
7+42u_1+56u_2+175u_1^2+455u_1u_2+329u_2^2+630u_1^3+2387u_1^2u_2+3367u_1u_2^2+1750u_2^3+2107u_1^4+10318u_1^3u_2\\
+21217u_1^2u_2^2+21546u_1u_2^3+8967u_2^4+6762u_1^5+40082u_1^4u_2+106526u_1^3u_2^2+157976u_1^2u_2^3+128772u_1u_2^4\\+45276u_2^5+21175u_1^6+145845u_1^5u_2+468895u_1^4u_2^2+900123u_1^3u_2^3+10744451u_1^2u_2^4+741993u_1u_2^5+227269u_2^6,
\end{multline*}
\normalsize
which implies that
\scriptsize
\begin{multline*}
m_{00}=\frac{7}{2}, m_{10}=7, m_{01}=\frac{28}{3}, m_{20}=\frac{175}{12}, m_{11}=\frac{455}{24}, m_{02}=\frac{329}{12}, m_{30}=\frac{63}{2}, 
m_{21}=\frac{2387}{60},\\ 
m_{12}=\frac{3591}{20}, m_{03}=\frac{175}{2}, m_{40}=\frac{2107}{30}, m_{31}=\frac{5159}{60}, m_{22}=\frac{21217}{180}, m_{13}=\frac{3591}{20},
m_{04}=\frac{2989}{10}, \\ 
m_{50}={161}, m_{41}=\frac{2863}{15}, m_{32}=\frac{7609}{30}, m_{23}=\frac{5642}{15}, m_{14}=\frac{3066}{5}, m_{05}=1078,  m_{60}=\frac{3025}{8},\\
m_{51}=\frac{6945}{16}, m_{42}=\frac{13397}{24}, m_{33}=\frac{128589}{160}, m_{24}=\frac{153493}{120}, m_{15}=\frac{35333}{16}, m_{06}=\frac{32467}{8}.
\end{multline*}
\endgroup
\end{example}

\subsection*{Results on non-convex polytopes.}
Our second group of results addresses the problem of distinguishing different polytopes with the same underlying set of vertices from information on their moments. The problem of restoring  the vertices  of a polygon  or a polytope with a constant mass density from information on its moments was addressed earlier in e.g.
\citelist{\cite{Bro1} \cite{BroSt} \cite{GMP} \cite{GGMPV} \cite{GMV00} \cite{GLPR}}. However, 
the latter do not provide the recovery of the vertices in the generality required in the present paper.  
Below we concentrate on the case of constant density and known vertices, and plan to return to the general inverse problem for polytopes with unknown polynomial  density and unknown  location of their vertices in the future. 

First we need to define what we mean by a polytope.   It turned out  that there is no general consensus about this notion. Instead there exist several competing definitions having their own advantages in different situations.  %We assume that  our readers are familiar with the standard notion of a simplicial complex and its faces and
  We shall study the following class of polytopal objects.

\newcommand{\gp}{generalized polytope}
\begin{defi+} 
A subset $\cP\subset \bR^d$  coinciding with a finite union of arbitrary convex  
$d$-dimensional polytopes is called  a {\it \gp}. 
\end{defi+}

\begin{defi+}
The {\it number of
components} of a   generalized polytope $\cP$ is the number of connected
components of the set $\cP^o\subset \cP$ of interior points of  $\cP$. The
closure of each connected component of $\cP^o$ is called a {\it component}  of
$\cP$. A generalized polytope with one component is called {\it
indecomposable}. 
\end{defi+}

\begin{rema+} We say that a simplicial complex in $\bR^d$ is {\it pure} if all its maximal simplices have dimension $d$.  
Clearly any generalized polytope  in $\bR^d$ can be represented as the topological space of an appropriate  pure simplicial
complex. 
\end{rema+}

\begin{rema+}
Often one considers a more restricted class of objects, namely
{\it polytopes}. A polytope $\cP\subset \bR^d$ is a \gp\ 
homeomorphic to a $d$-dimensional manifold with boundary.  
\end{rema+}
%Each traingulation $\cT$ of a \gp\ $\cP\subset\bR^d$ (note that we allow arbitrarily fine finite subdivisions
%of $\cP$) specifies a collection $F_{\cT}(\cP)$ of flats 
%(i.e. affine subspaces of $\bR^d$), spanned by faces of each simplex in $\cT$. A flat will be called 
%$\cP$-{\em stable} if it appears in $F_{\cT}(\cP)$ for {\em any} triangulation $\cT$.
%\begin{defi+} 
%A {\em facet} (or a $d-1$-dimensional {\em face}) 
%of $\cP$ is the closure of a $d-1$-dimensional component of the  interior of $S\cap\partial\cP$,
%for $S$ a $\cP$-stable $d-1$-dimensional flat.\\
%More generally, a $k$-dimensional {\em face} of $\cP$, for $k\leq d-2$, is the closure of a $k$-dimensional 
%component of the  interior of $S\cap\bigcup_{X\subset\cP}\partial X$,
%where the union is taken over all $k+1$-dimensional flats $X$, and $S$ is a $\cP$-stable $k$-dimensional flat. \\
%A 0-dimensional flat of $\cP$ is called a {\em vertex}.
%\end{defi+}
%\begin{proposition}\label{prop:strat} a) Each \gp\ has a unique stratification into faces where each face is an indecomposable \gp. 

%\noindent
%b) For each face $f\subseteq \cP$ the affine subspace spanned by $f$ coincides with the affine subspace spanned by its set of vertices. 

%\noindent
%c) A vertex of $\cP$ is a 0-dimensional $\cP$-stable flat. 
%\qed
%\end{proposition}

We need to introduce the notion of a vertex of a generalized polytope.  

\begin{defi+} Given a generalized polytope $\cP\subset \bR^d,$ we call a finite collection of open  disjoint $d$-dimensional simplices in $\bR^d$ a {\it dissection} of $\cP$ if  the closure of their union coincides with $\cP$.  
\end{defi+}
A wealth of material on dissections of polytopes can be found in \cite{Pak08}, see also
\cite{MR2743368}.
\begin{defi+}
Given a generalized polytope $\cP\subset \bR^d$, we call a point $\bv$  a {\it vertex} of $\cP$, 
if $\bv$ is a vertex of (the closure of) some open simplex in every dissection of $\cP$. 
\end{defi+} 

\begin{defi+} Given a point $p\in \cP$ of a generalized polytope $\cP,$ we denote by the tangent cone $T_p(\cP)$ of $\cP$ at $p$ the set obtained as follows. For a sufficiently small $\eps>0,$ set $\cP_p(\eps)=\cP\cap B_p(\eps)$ where $B_p(\eps)$ is the $\eps$-ball centered at $p$. Define $T_p(\cP)$ as the set obtained by taking a ray through $p$ and every point of $\cP_p(\eps)$. In other words,  $T_p(\cP)$ is the cone with the apex at $p$ and the base $B_p(\eps)$. (Obviously, 
$T_p(\cP)$ is independent of $\eps$ for a sufficiently small $\eps>0$, and it need not be convex.) 
\end{defi+}

\begin{lemma}\label{lm:inv}
A point $\bv$ is a {\it vertex} of $\cP$ if and only if $T_\bv(\cP)$ does not admit a decomposition in 
the disjoint union of convex polyhedral subcones, such that each subcone in the decomposition has a translation-invariant direction (i.e. is not pointed). 
In particular, if the tangent cone to $\cP$ at $\bv$ has a connected component with no translation-invariant direction, then $\bv$ is a vertex. 
\end{lemma}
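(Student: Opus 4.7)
My plan is to prove the equivalence in two directions, and then derive the ``in particular'' clause as an easy consequence.

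\emph{Direction $\bv$ not a vertex $\Rightarrow$ decomposition exists.} I start from a dissection $\D$ of $\cP$ in which $\bv$ is not a vertex of any simplex, and set $\D_\bv := \{\sigma \in \D : \bv \in \bar\sigma\}$. For each $\sigma \in \D_\bv$, since $\bv$ is not a vertex of $\sigma$, it lies in the relative interior of a unique face $F_\sigma \subsetneq \bar\sigma$ of dimension $\dim F_\sigma \ge 1$. The collection $\{T_\bv(\sigma)\}_{\sigma \in \D_\bv}$ tiles $T_\bv(\cP)$ by convex polyhedral subcones with pairwise disjoint interiors, and each $T_\bv(\sigma)$ contains the full linear span of $F_\sigma - \bv$ in its lineality space, providing the required translation-invariant direction.

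\emph{Direction decomposition exists $\Rightarrow$ $\bv$ not a vertex.} Assume $T_\bv(\cP) = \bigsqcup_i C_i$ with each $C_i$ a convex polyhedral cone whose lineality space contains some $w_i \ne 0$. I construct a dissection of $\cP$ in which $\bv$ is not a vertex. Choose $\varepsilon > 0$ so small that $\cP \cap \bar B(\bv,\varepsilon) = (\bv + T_\bv(\cP)) \cap \bar B(\bv,\varepsilon)$, and pick a small polytopal neighborhood $Q \subset B(\bv,\varepsilon)$ of $\bv$. For each $C_i$, write $C_i = \RR w_i + C'_i$ with $C'_i$ polyhedral in a hyperplane transverse to $w_i$; triangulate the base $(\bv + C'_i) \cap Q$ into $(d-1)$-simplices, and over each base simplex $\tau$ form the prism $\tau + [-\eta_i,\eta_i]\,w_i$ (with $\eta_i > 0$ small enough that the prism lies in $\cP$), subdividing it into $d$-simplices by a standard prism triangulation. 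Every vertex of these $d$-simplices has the form $v \pm \eta_i w_i$ for a vertex $v$ of a base simplex; in particular $\bv$, which lies in the relative interior of the edge from $\bv - \eta_i w_i$ to $\bv + \eta_i w_i$, is never among them. Finally I extend to a dissection of all of $\cP$ by triangulating $\cP \setminus Q$ compatibly with the induced boundary dissection on $\partial Q \cap \cP$.

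The main technical obstacle is ensuring that the base triangulations in adjacent subcones $C_i$ and $C_j$ agree on their shared boundary face, since the lineality directions $w_i \ne w_j$ typically differ and the two prism constructions need not \emph{a priori} match. I plan to handle this by taking a common polyhedral refinement along the shared boundaries between the $C_i$, and then triangulating the bases inductively on dimension, so that lower-dimensional pieces of the dissection agree across subcones and the global gluing is consistent.

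Finally, for the ``in particular'' clause: if a connected component $K$ of $T_\bv(\cP)$ has no translation-invariant direction, then $K$ contains no line through $\bv$, and the same is true of any convex polyhedral subcone of $K$. In any decomposition $T_\bv(\cP) = \bigsqcup_i C_i$ as in the lemma, each $C_i$ is convex and hence connected, so it lies in a single component of $T_\bv(\cP)$; those $C_i$ landing in $K$ would then need to carry a translation-invariant direction, which is impossible. Hence no such decomposition exists, and by the equivalence $\bv$ is a vertex of $\cP$.
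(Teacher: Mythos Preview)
Your first direction and the ``in particular'' clause coincide with the paper's. For the converse the paper takes a shorter route than your prism construction: it fixes a small polytopal ball $\Boxe$ centred at $\bv$ and observes that each $C_i\cap\Boxe$ is a convex polytope in which $\bv$ is \emph{not} a vertex, since the lineality direction $w_i$ places $\bv$ in the relative interior of an edge of $C_i\cap\Boxe$. Every convex polytope admits a triangulation on its own vertex set, so each $C_i\cap\Boxe$ is dissected without using $\bv$. The complement $\cP\setminus\Boxe$ is then dissected arbitrarily; since $\bv$ lies in the interior of $\Boxe$, none of these simplices has $\bv$ as a vertex either.

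The ``main technical obstacle'' you raise does not actually arise. Reread the paper's definition of a dissection: it is merely a finite collection of \emph{disjoint open} $d$-simplices whose union has closure $\cP$; there is no face-to-face requirement. Consequently the dissections of the pieces $C_i\cap\Boxe$ and of $\cP\setminus\Boxe$ can simply be concatenated, with nothing to reconcile along shared boundaries. Your prism argument can be pushed through, but the compatibility worry is self-imposed, and your final step is slightly off: the prisms you build cover only a prismatic slab around the base, not all of $(\bv+C_i)\cap Q$, so it is $\cP$ minus the union of the closed prisms (whose closure indeed avoids $\bv$), rather than $\cP\setminus Q$, that you must dissect at the end.
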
  

We denote by $\conv(S)$ the convex hull of an arbitrary set $S\subset\bR^d$.
The above lemma implies that any vertex of $\conv(\cP)$ is a vertex of $\cP$.

The following result extends Corollary~\ref{cor:arbit} to the case of generalized polytopes. 

\begin{prop+}\label{prop:gener}
For any generalized polytope $\cP$  with the set of  vertices $\Verti(\cP)$,
the denominator of its normalized moment generating function $F^\rho_\cP(\bu)$ 
with respect to  a homogeneous
polynomial density function $\rho$ of degree $\delta$ divides 
$$\Phi_\cP(\bu):=\prod_{\bv\in \Verti(\cP)}(1-\lan \bv,\bu\ran)^\delta.$$
\end{prop+}

\begin{rema+} \label{rema:schon}
There exist generalized polytopes which do not admit
dissections with only existing vertices.  The simplest example of this kind 
is the {\em Sch\"onhardt polyhedron}, see Figure~\ref{fig:schon} and  \cite{Sch}. Absence of 
a dissection $\cT$ which uses only its $6$ vertices can be established by observing that none of
the edges $AC$, $A'B$, and $B'C'$ can appear in a simplex of $\cT$, yet 
any simplex on these $6$ vertices must contain one of them. Therefore,
Proposition~\ref{prop:gener} is not an immediate consequence of
Corollary~\ref{cor:simplex}.  
\end{rema+}

\begin{figure}[h]
\begin{tikzpicture}[z=-5.5,line join=bevel,scale=2]%,z=-5.5]
\coordinate (Ap) at (-0.07,-0.05,-1); % A'
\coordinate (Cp) at (-1,0.05,-0.15);  % C'
\coordinate (A) at (-0.07,-0.05,1);  % A
\coordinate (C) at (1,0.05,-0.15);   % C
\coordinate (B) at (0.05,1,0.25);    % B
\coordinate (Bp) at (0.05,-1,0.25);   % B'

%\coordinate (Ap) at (-0.07,-0.05,-1); % A'
%\coordinate (Cp) at (-1,0.05,-0.05);  % C'
%\coordinate (A) at (-0.07,-0.05,1);  % A
%\coordinate (C) at (1,0.05,-0.05);   % C
%\coordinate (B) at (0.05,1,0.05);    % B
%\coordinate (Bp) at (0.05,-1,0.05);   % B'

\coordinate (S1) at (-0.5,0.3,0);
\coorshifting
\draw [fill opacity=0.7,fill=green!70!blue] (CC) -- (AAp) -- (BB) -- cycle; 
\draw (CC) -- (AAp) -- (CCp) -- cycle; 
\draw (CC) -- (BB) -- (CCp) -- cycle; 
\draw [fill opacity=0.7,fill=green!70!blue]  (AAp) -- (BB) -- (CCp) -- cycle; 

\coordinate (S1) at (-0.6,-0.25,0);
\coorshifting
\draw (AA) -- (AAp) -- (BBp) -- cycle; 
\draw (AA) -- (AAp) -- (CCp) -- cycle; 
\draw [fill opacity=0.7,fill=gray!70!black]   (AA) -- (BBp) -- (CCp) -- cycle; 
\node (z) at (0,0,0) [label=left:$0$]{};
\fill [blue] ($(z)$) circle (0.3pt);
\draw (Cp) node (a2) [label=left:$C'$]{};

\draw (C) -- (Cp) -- (Ap) -- cycle;
\draw (C) -- (Cp) -- (B) -- cycle;
\draw (Ap) -- (A) -- (Cp) -- cycle;

\draw (C) -- (Ap) -- (Bp) -- cycle;
\draw [fill opacity=0.7,fill=green!80!blue] (Cp) -- (A) -- (B) -- cycle;
\draw (Ap) -- (A) -- (Bp) -- cycle;
\draw [fill opacity=0.8,fill=orange!80!black] (A) -- (Bp) -- (B) -- cycle;
\draw [fill opacity=0.7,fill=purple!70!black] (C) -- (B) -- (Bp) -- cycle;

\draw (A) node (a3) [label=-120:$A$]{};
\draw (C) node (a4) [label=right:$C$]{};
\draw (B) node (b1) [label=90:$B$]{};
\draw (Bp) node (c1) [label=-90:$B'$]{};
\draw (Ap) node (a1) [label=45:$A'$]{};

\coordinate (S1) at (0.53,0,0);
\coorshifting
\draw (AA) -- (BB) -- (BBp) -- cycle; 
\draw [fill opacity=0.7,fill=green!80!blue] (AA) -- (BB) -- (CC) -- cycle; 
\draw [fill opacity=0.7,fill=gray!70!black]  (AA) -- (BBp) -- (CC) -- cycle; 

% now we draw the initial tetrahedron:
\coordinate (S1) at (-3.2,0,0);
\coorshifting
\draw (CC) node (d2) [label=-90:$C$]{};
\draw (CCp) node (c2) [label=-90:$C'$]{};
\draw (AAp) node (a1) [label=45:$A'$]{};
\draw (BB) node (b1) [label=90:$B$]{};
\draw (BBp) node (c1) [label=-90:$B'$]{};

\draw[ultra thin] (AAp) -- (AA);
\draw[semithick] (CCp) -- (CC);
\draw[very thick] (BBp) -- (BB);

%\draw (AAp) -- (AA);
%\draw (BBp) -- (BB);
%\draw (CCp) -- (CC);
\draw (AAp) -- (BB) -- (CCp) -- cycle; 
\draw (AAp) -- (BBp) -- (CCp) -- cycle; 
\draw (AAp) -- (BB) -- (CC) -- cycle; 
\draw (AAp) -- (BBp) -- (CC) -- cycle; 

\draw [fill opacity=0.7,fill=green!80!blue] (AA) -- (BB) -- (CC) -- cycle; 
\draw [fill opacity=0.7,fill=green!80!blue] (AA) -- (BB) -- (CCp) -- cycle; 
\draw [fill opacity=0.7,fill=gray!70!black]  (AA) -- (BBp) -- (CC) -- cycle; 
\draw [fill opacity=0.7,fill=gray!70!black]  (AA) -- (BBp) -- (CCp) -- cycle; 

\draw (AA) node (a3) [label=-120:$A$]{};
\end{tikzpicture}
\caption{Sch\"{o}nhardt polyhedron  
obtained from an octahedron (on the left) by removing  tetrahedra 
$[ABB'C]$, $[AA'B'C']$, and $[A'BCC']$.
\label{fig:schon}}
\end{figure}
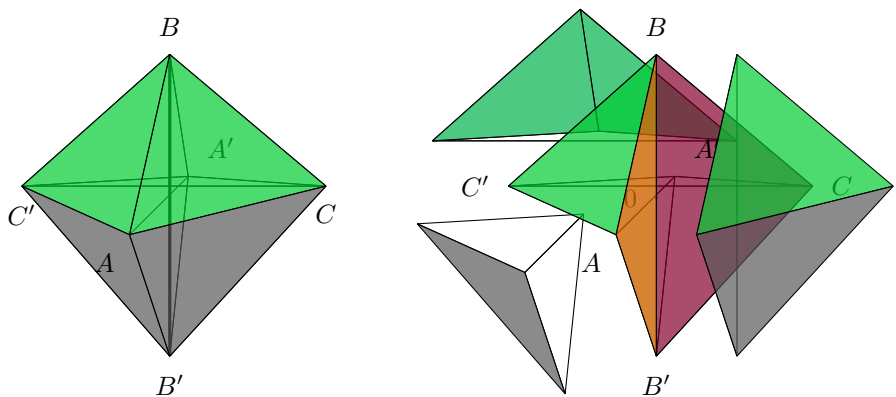

\begin{rema+}
For ``generic'' generalized polytopes $\cP$, the denominator $\Omega(\bu)$ of $F_\cP(\bu)$ 
equals  $\Phi_\cP(\bu)$, but for certain special polytopes  the denominator $\Omega(\bu)$ 
may be its proper divisor, as can be seen from 
the following example. Let $A=\{0,a_1,a_2,a_3\}\subset\RR^3$ be a spanning
set, and $v\in\RR^3$.
Let $\cP_\pm:=\conv(v\pm A)$ and
$\cP:=\cP_+\cup\cP_-$.
Then $1-\lan\bu,v\ran$ does not appear in $\Omega(\bu)$, as
$$F_\cP(\bu)=F_{\cP_+}(\bu)+F_{\cP_-}(\bu)=K\frac{\sum\limits_{1\leq i<j\leq 3}\lan
\bu,a_i\ran\lan\bu,a_j\ran +(1-\lan\bu,v\ran)^2}
{\prod\limits_{1\leq i\leq 3}((1-\lan\bu, v\ran)^2-\lan \bu,a_i\ran^2)},$$
where $K\neq 0$ is a real constant. 
\end{rema+}

%\begin{not+}
Now we  introduce several finite-dimensional linear spaces related to a given finite spanning set $S\subset \bR^d$. Let $\cP(S)$ be the set of all generalized polytopes $\cP$  whose sets $\Verti(\cP)$ of vertices are contained in   $S$. %(Part b) of Proposition~\ref{prop:strat} implies that  $\cP(S)$ is finite.) 
For $\cP\in \cP(S)$, we denote by $\mu_\cP$ its standard measure.  (Obviously, $\mu_\cP$ is supported on $\cP\subseteq \conv(S)$.) 

Denote by $\M(S)$ the linear space of all signed measures, i.e. the linear span of all standard measures $\mu_\cP$ for $\cP\in \cP(S)$. Let $\M^\Delta(S)\subseteq \M(S)$ be its subspace spanned by  $\mu_\Delta$, for $\Delta\in \cP(S)$ a $d$-dimensional simplex.  (The space $\M^\De(S)$ has earlier appeared in \cite{AlGelZel}, \cite{Al1}, \cite{Al2} in a somewhat different context.) We shall refer to  elements of $\M(S)$ as to {\it polytopal measures} with the vertex set $S$. The following conjecture was central to our study; it was
pointed out to us that it follows from results in \cite{BrVe99}
after \cite{GPSS12} was released (cf. Subsection~\ref{subs:arr}
for a discussion). As well, at the same time authors of \cite{ABR17}
started working on this question; their
 \cite{ABR17}*{Theorem~1}, proved using a distinct from \cite{BrVe99}
 set of ideas, implies the conjecture.
\begin{conjecture}\label{conj:main} 
(Corollary to \cite{ABR17}*{Theorem~1}) For an arbitrary spanning set $S$
and any $\cP\in\cP(S)$, its standard measure $\mu_\cP$ belongs to $\M^\Delta(S)$. In other words, $\M(S)=\M^\Delta(S)$.  %Moreover, for any spanning set $S$ the  dimension of the space $\M(S)=\M^\Delta(S)$ depends only on the (non-oriented) matroid determined by $S$.
\end{conjecture}

By Remark~\ref{rema:schon}, the above is non-trivial.  In fact, \cite{ABR17}
shows a stronger result, namely that the coefficients in a decomposition of
$\mu_cP$ into a sum of $\mu_\Delta$ are  integers, in particular
resolving in the affirmative \cite{GPSS12}*{Problem~3}.
In view of this, we can
make a stronger, ``inclusion-exclusion``-like conjecture.

\begin{conjecture}\label{conj:inclexcl}
For an arbitrary spanning set $S$ and
any $\cP\in\cP(S)$, its
standard measure $\mu_\cP$ can be decomposed as
\[
\mu_\cP=\sum_{\Delta\in\mathcal{D}_\cP} \sigma_\Delta\mu_\Delta,
\qquad \sigma_\Delta=\pm 1\text{ for all $\Delta$},
\]
with $\mathcal{D}_\cP$ a set of $d$-dimensional simplices in $\cP(S)$.
\end{conjecture}
Note that this holds true for $d=2$, as well as for any convex
$\cP$, with a stronger condition that all $\sigma_\Delta=1$.

While we did not have a proof of Conjecture~\ref{conj:main} in its full generality, we have
succeeded in proving it for a rather large class of spanning sets. 
Roughly speaking, the latter should be close to ``generic''.
Specifically, given a finite spanning set $S\subset \bR^d,$  we say that $S$ is {\it weakly non-degenerate} if any $(d+2)$-tuple of points from $S$  is spanning.  If $S$ satisfies the stronger condition that each $(d+1)$-subset of $S$ is spanning then we call  the latter $S$  {\em strongly non-degenerate}.

\begin{theorem}\label{th:fund} 
Conjecture~\ref{conj:main} holds for any weakly non-degenerate finite set $S$. 
\end{theorem}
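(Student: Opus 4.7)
The overall plan is to identify both $\M(S)$ and $\M^\De(S)$ with subspaces of a common finite-dimensional space of polynomials via the moment generating functional $F$, and then to show these subspaces coincide.

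First, $F$ is injective on $\M(S)$: each $\mu\in\M(S)$ is compactly supported inside $\conv(S)$, and its moments (encoded by the coefficients of $F_\mu$) uniquely determine such a signed Borel measure. Thus it suffices to establish $F(\M(S))\subseteq F(\M^\De(S))$.

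Set $n:=|S|$, $\ell_\bv(\bu):=1-\langle\bv,\bu\rangle$, $\Phi_S(\bu):=\prod_{\bv\in S}\ell_\bv(\bu)$, and $V:=\bR[\bu]_{\le n-d-1}$. For any generalized polytope $\cP$ with $\Verti(\cP)\subseteq S$, Proposition~\ref{prop:gener} gives $F_{\mu_\cP}=N_\cP/\prod_{\bv\in\Verti(\cP)}\ell_\bv$ for some polynomial $N_\cP$. The integral representation~\eqref{eq:Mainint} yields $F_{\mu_\cP}(t\bu)=O(t^{-(d+1)})$ as $t\to\infty$ for generic $\bu$; comparing this with the denominator's growth of order $|\Verti(\cP)|$ forces $\deg N_\cP\le|\Verti(\cP)|-d-1$. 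Hence
\[ F_{\mu_\cP}\cdot\Phi_S = N_\cP\cdot\prod_{\bv\in S\setminus\Verti(\cP)}\ell_\bv\in V, \]
and by linearity $F(\M(S))\cdot\Phi_S\subseteq V$. On the other hand, Corollary~\ref{cor:simplex} shows that for each affinely spanning $(d+1)$-subset $T\subseteq S$,
\[ F_{\mu_{\De_T}}\cdot\Phi_S = d!\,\Vol(\De_T)\cdot Q_T,\qquad Q_T:=\prod_{\bv\in S\setminus T}\ell_\bv, \]
so $F(\M^\De(S))\cdot\Phi_S = \mathrm{Span}\{Q_T\}\subseteq V$.

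The proof thus reduces to the purely algebraic statement: under weak non-degeneracy of $S$, the family $\{Q_T\}$ spans all of $V$. I plan to prove this by induction on $n$. The base case $n=d+1$ is immediate, as $V$ is one-dimensional and $Q_S=1$. For the inductive step, choose $\bv_0\in S$ and set $S':=S\setminus\{\bv_0\}$; since every $(d+2)$-subset of $S'$ is also one of $S$, weak non-degeneracy descends to $S'$. The inductive hypothesis then yields spanning of $\bR[\bu]_{\le n-d-2}$ by the polynomials $\prod_{\bv\in S'\setminus T'}\ell_\bv$ attached to spanning $(d+1)$-subsets $T'\subseteq S'$; multiplying these by $\ell_{\bv_0}$ embeds the span into $V$ and accounts precisely for the $Q_T$ with $\bv_0\in S\setminus T$. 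It then remains to show that the restrictions to the hyperplane $H_0:=\{\ell_{\bv_0}=0\}\subset\bR^d$ of those $Q_T$ with $\bv_0\in T$ span all polynomials of degree $\le n-d-1$ on $H_0\cong\bR^{d-1}$: a lower-dimensional version of the same problem, applied to the projections of $S\setminus\{\bv_0\}$ onto $H_0$.

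The principal obstacle is this boundary spanning. Weak non-degeneracy of $S$ only forces every $(d+2)$-subset to span, so some $(d+1)$-subsets of $S$ may remain degenerate and the corresponding $Q_T$ are absent from our list. The subtle point is that the remaining polynomials must nonetheless span $V$; I expect to handle this by exploiting weak non-degeneracy to isolate the degenerate configurations geometrically---e.g., via an explicit Vandermonde-type determinant argument on the matrix of coefficients of the $Q_T$ in a monomial basis of $V$, or by a further peeling reducing ultimately to the case $d=1$, where the claim is elementary.
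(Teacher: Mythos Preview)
Your reduction is clean up to the point where you claim that, under weak non-degeneracy, the family $\{Q_T: T\text{ spanning}\}$ spans all of $V=\bR[\bu]_{\le n-d-1}$. That statement is \emph{false} whenever $S$ is weakly but not strongly non-degenerate. Indeed, multiplication by $\Phi_S$ is injective, so $\dim\mathrm{Span}\{Q_T\}=\dim\F^\De(S)=\dim\M^\De(S)$; and Proposition~\ref{pr:basic}(i) gives $\dim\M^\De(S)=\binom{n-1}{d}-\sharp_{deg}$, strictly less than $\dim V=\binom{n-1}{d}$ as soon as some $(d+1)$-subset of $S$ is non-spanning. Example~\ref{ex:three} exhibits this concretely: there $\dim V=6$ while $\dim\F^\De(S)=4$. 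So the ``purely algebraic statement'' you are aiming at cannot be proved because it does not hold; no amount of Vandermonde-type cleverness or dimension peeling will rescue it.

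The paper's proof avoids this trap by \emph{not} trying to show $\F^\De(S)=\Rat(S)$. Instead it shows (Lemma~\ref{lem:generic}) that the products $Q_T$ over \emph{all} $(d+1)$-subsets $T$, spanning or not, fill out $V$; this is where weak non-degeneracy is actually used. One then writes $F_{\mu_\cP}$ as a combination of the fractions $1/\prod_{\bv\in T}\ell_\bv$ over all $T$. For non-spanning $T$, Proposition~\ref{pr:degsimplex} identifies the corresponding fraction as the Fantappi\`e transform of a measure \emph{singular} with respect to Lebesgue measure on $\bR^d$ (supported on a hyperplane). Since $\mu_\cP$ is absolutely continuous, its singular part vanishes, forcing all coefficients of degenerate $T$'s to be zero. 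What remains is a combination of honest simplex measures, proving $\mu_\cP\in\M^\De(S)$. The missing idea in your approach is precisely this measure-theoretic step: the inclusion $\F(S)\subseteq\F^\De(S)$ is not a fact about polynomials alone, but relies on the absolute continuity of standard measures of generalized polytopes.
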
 

\begin{rema+} Theorem~\ref{th:fund} would imply Conjecture~\ref{conj:main} if one could prove that the standard measure of an arbitrary generalized polytope $\cP$ can be obtained as the limit of the  standard measures  of a $1$-parameter family of generalized polytopes $\cP(t)$ with $\cP(0)=\cP$  such that for $t\neq 0$ the vertices of $\cP(t)$ are weakly non-degenerate, and 
each vertex of $\cP(t)$ tending to a vertex of $\cP$
as $t\to 0$. We are unable to prove the existence of such deformations in general. 
\end{rema+} 

The key idea in the proof of  Theorem~\ref{th:fund} is to study the 
corresponding spaces of Fantappi\`{e}  transformations of signed measures in $\M(S)$. 
In particular, we are able to compute the corresponding dimensions\footnote{
Note that presently we are not aware of a formula or a recipe for calculating the dimension of 
$\M^\De(S)$ without the assumption of the theorem. 
(The manuscript \cite{Al1} contains an algorithm constructing a basis of this space.) }.
In more detail, 
let $\F(S)$ (resp. $\F^\Delta(S)$) be the linear space of Fantappi\`{e}  transformations of signed measures in $\M(S)$ (resp. $\M^\Delta(S)$). In other words, $\F(S)$  (resp. $\F^\Delta(S)$) is the space of normalized moment generating functions of signed  measures in $\M(S)$ (resp. $\M^\Delta(S)$). %Obviously, $\mathfrak F^\Delta_S\subseteq \mathfrak F_S.$

Since each compactly supported measure is uniquely determined by its complete set of moments, the map 
\begin{equation}\label{eq:isom}
F_\mu: \M(S)\to \F(S),
\end{equation}
induced by the Fantappi\`{e} transformation is a linear isomorphism, cf. \cite{APS}*{Section 3.5}.

%Given a finite spanning set $S\subset \bR^d$  and its arbitrary point $\bv\in S$ associate to the pair $(S,\bv)$ the vector configuration $Vect(S,\bv)$ consisting of all vectors $\bw-\bv$ where $\bw\in S$. Since $S$ is spanning then  $Vect(S,\bv)$ spans $\bR^d$. Denote by $Matr(S,\bv)$ the associated vectorial matroid and by $\sharp(S,\bv)$ the number of bases in $Matr(S,\bv)$, see e.g. \cite{BVSWZ}. 

%\end{not+}

Finally, given a spanning set $S=\{\bv_1,\ldots , \bv_N\}\subset \bR^d,$ denote by $\Rat(S)$ the linear space of all rational functions with the denominator $\Phi_S(\bu)$ as in \eqref{eq:denom},
\begin{equation}\label{eq:denom}
\Phi_S(\bu)=\prod_{i=1}^N (1-\lan\bv_i,\bu\ran),
\end{equation}
and with the numerator an arbitrary real (inhomogeneous) polynomial of degree at most $N-d-1$. Here the 
numerator and the denominator might have common factors.

\begin{proposition}\label{pr:generic} 
  $\F^\Delta(S)$  coincides with $\Rat(S)$ if and only if 
$S$ is strongly non-degenerate. 
%\item If $S$ is strongly non-degenerate then for any $i=1,\ldots, N$ 
%the set $\B_{i}$ of the standard measures  of all simplices containing $\bv_i$ 
%is a natural basis of $\F^\Delta(S)$.  
%\item If $S$ is degenerate then  $\dim\Rat(S)<\binom{N-1}{d}$. \label{prop:degcase}
%\end{enumerate}
\end{proposition}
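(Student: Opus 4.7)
The inclusion $\F^\Delta(S)\subseteq\Rat(S)$ holds for any spanning $S$: by Corollary~\ref{cor:simplex}, every $d$-simplex $\Delta_J$ with vertex set $J\subseteq S$ satisfies
\[
F_{\Delta_J}(\bu) \;=\; \frac{d!\Vol(\Delta_J)\,\prod_{\bv\in S\setminus J}(1-\langle\bv,\bu\rangle)}{\Phi_S(\bu)},
\]
whose numerator is a polynomial of degree $N-d-1$, so only the ``iff'' needs argument.

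For sufficiency, I plan to prove the following more flexible purely algebraic statement and specialize it to $\ell_i(\bu):=1-\langle\bv_i,\bu\rangle$: \emph{if $\ell_1,\dots,\ell_N$ are affine forms on $\bR^d$ such that any $d+1$ of them are linearly independent in the $(d+1)$-dimensional space of affine forms, then the $\binom{N}{d+1}$ products $\prod_{i\in K}\ell_i$ over $(N-d-1)$-subsets $K\subseteq\{1,\dots,N\}$ span the space of polynomials of degree $\leq N-d-1$ on $\bR^d$.} This is shown by induction on $N+d$, with easy base cases $N=d+1$ (the empty product equals $1$) and $d=0$ (only constant polynomials). For the inductive step, pick an index $N$ with $\ell_N$ non-constant; the zero locus $H^*:=\{\ell_N=0\}$ is a $(d-1)$-dimensional affine subspace and the restrictions $\ell_i|_{H^*}$ for $i<N$ remain in general position. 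Given a target $p$ of degree $\leq N-d-1$, apply the inductive hypothesis for $(N-1,d-1)$ to $p|_{H^*}$: this produces $p_0=\sum_{K'}c_{K'}\prod_{i\in K'}\ell_i$ with $K'\subseteq\{1,\dots,N-1\}$, $|K'|=N-d-1$, and $p_0|_{H^*}=p|_{H^*}$. Then $p-p_0=\ell_N\,r$ with $\deg r\leq N-d-2$; the inductive hypothesis for $(N-1,d)$ applied to $r$ writes it as a combination of products $\prod_{i\in K''}\ell_i$ with $K''\subseteq\{1,\dots,N-1\}$, $|K''|=N-d-2$, and multiplying each by $\ell_N$ supplies the remaining products (those containing index $N$). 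Specialization to $\ell_i=1-\langle\bv_i,\bu\rangle$ gives $\Rat(S)\subseteq\F^\Delta(S)$ when $S$ is strongly non-degenerate.

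For necessity, suppose some $(d+1)$-subset $J_0\subseteq S$ lies in an affine hyperplane $H$ and consider
\[
g(\bu) \;:=\; \frac{1}{\prod_{\bv\in J_0}(1-\langle\bv,\bu\rangle)} \;=\; \frac{\prod_{\bv\in S\setminus J_0}(1-\langle\bv,\bu\rangle)}{\Phi_S(\bu)} \;\in\;\Rat(S).
\]
If $0\notin H$, the system $\langle\bv,\bu^*\rangle=1$ for $\bv\in J_0$ has a unique solution $\bu^*$; along the line $\bu=t\bu^*$, $g$ has pole of order $d+1$ at $t=1$, while every $F_{\Delta_J}(t\bu^*)$ has pole of order $|J\cap H|\leq d$, since $J$ is affinely independent. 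If instead $0\in H$, pick $\mathbf{n}\in H^\perp\setminus\{0\}$: then $g(\bu+t\mathbf{n})$ is constant in $t$ (each $\ell_\bv$ with $\bv\in J_0$ being independent of $t$), whereas for any spanning $(d+1)$-subset $J$ at least one vertex lies outside $H$, forcing $F_{\Delta_J}(\bu+t\mathbf{n})\to 0$ as $t\to\infty$. In either case $g$ cannot be a linear combination of the $F_{\Delta_J}$'s, so $g\notin\F^\Delta(S)$.

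The main obstacle is formulating the sufficiency induction with the right flexibility: restricting to $H^*$ produces affine forms whose constant terms are no longer normalized to $1$, so the statement must be phrased for arbitrary general-position affine forms rather than only those attached to a vertex set. Once this flexible algebraic formulation is adopted, the interlocking sub-inductions on $(N-1,d-1)$ (for the restriction step) and $(N-1,d)$ (for the remainder) close cleanly.
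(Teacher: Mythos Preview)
Your argument is correct, and it differs from the paper's in both directions.

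For sufficiency, the paper first invokes Lemma~\ref{lm:simplex} to reduce to the $\binom{N-1}{d}$ simplices through a fixed vertex $\bv_N$, and then (Lemma~\ref{lm:product}) argues by iterative substitution: fixing $N-d-2$ of the linear-form factors and letting the last one range over the remaining $d+1$ forms (which, by strong non-degeneracy, are independent and hence span) one replaces one $l_j$ by an arbitrary variable $u_k$; repeating this peels off all the $l$'s. Your restriction-to-a-hyperplane double induction on $(N,d)$ is cleaner and entirely self-contained---it does not need Lemma~\ref{lm:simplex}---though the paper's route has the side benefit of simultaneously producing the basis $\B_N$ that is reused later for the explicit inversion formula in Theorem~\ref{th:inverse}.

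For necessity, the paper just counts dimensions: if $S$ is not strongly non-degenerate then the spanning set $\B_N$ has fewer than $\binom{N-1}{d}$ elements (degenerate $(d{+}1)$-tuples either fail to give simplices or produce linear relations), so $\dim\F^\Delta(S)<\dim\Rat(S)$. Your approach---exhibiting an explicit $g\in\Rat(S)\setminus\F^\Delta(S)$ via pole order along a line, respectively asymptotics along a ray---is more direct and avoids Lemma~\ref{lm:simplex} here too. One small inaccuracy: in your Case~1 the solution $\bu^*$ need not be \emph{unique} (uniqueness would require $J_0$ to span $\bR^d$ linearly), but you only use existence; the hyperplane that matters is then $\{\bx:\langle\bx,\bu^*\rangle=1\}$, which contains $J_0$ and meets any affinely independent $(d{+}1)$-set $J$ in at most $d$ points, so the pole-order bound $\le d$ still holds.
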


\begin{corollary}\label{cor:generic} If $S$ is strongly non-degenerate then  $\M^\Delta(S)=\M(S)$. \end{corollary}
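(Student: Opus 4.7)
The plan is to transport the problem through the Fantappi\`e isomorphism $F_\mu\colon \M(S)\to\F(S)$ from~\eqref{eq:isom} and then squeeze using Proposition~\ref{pr:generic}. Since $\M^\Delta(S)\subseteq \M(S)$ tautologically, the desired equality is equivalent to $\F^\Delta(S)=\F(S)$. Under the strong non-degeneracy hypothesis, Proposition~\ref{pr:generic} already identifies the smaller space as $\F^\Delta(S)=\Rat(S)$. So the whole argument reduces to verifying the a priori inclusion $\F(S)\subseteq \Rat(S)$: once this is in place, the chain
$\F^\Delta(S)\subseteq \F(S)\subseteq \Rat(S)=\F^\Delta(S)$
collapses, and the isomorphism transports the equality of Fantappi\`e images back to $\M^\Delta(S)=\M(S)$.

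To establish $\F(S)\subseteq \Rat(S)$, I would take an arbitrary generator $\mu_\cP$ with $\cP\in\cP(S)$ a generalized polytope and extend by linearity. By Proposition~\ref{prop:gener} applied to the constant density, the denominator of $F_{\mu_\cP}(\bu)$ divides $\prod_{\bv\in\Verti(\cP)}(1-\lan\bv,\bu\ran)$, which in turn divides $\Phi_S(\bu)$ because $\Verti(\cP)\subseteq S$. Writing $F_{\mu_\cP}(\bu)=P(\bu)/\Phi_S(\bu)$ for a polynomial $P$, the only remaining task is the numerator bound $\deg P\le N-d-1$, where $N=|S|$.

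For the degree bound I would invoke the integral representation~\eqref{eq:Mainint}: $F_{\mu_\cP}(\bu)=d!\int_{\cP}(1-\lan\x,\bu\ran)^{-(d+1)}\,d\x$. Along any ray $\bu=t\U$ with $t\to\infty$, the integrand is uniformly $O(t^{-(d+1)})$ in $\x\in\cP$ (which is compact), hence $F_{\mu_\cP}(t\U)=O(t^{-(d+1)})$. On the other hand, for $\U$ outside the zero locus of the leading form of $\Phi_S$, the ratio $P(t\U)/\Phi_S(t\U)$ behaves asymptotically as $t^{\deg P-N}$ times a nonzero rational function of $\U$. Matching the two decay rates forces $\deg P-N\le -(d+1)$, i.e.\ $\deg P\le N-d-1$, so $F_{\mu_\cP}\in\Rat(S)$.

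The main point requiring care is precisely this asymptotic: Proposition~\ref{prop:gener} controls only the denominator, while membership in $\Rat(S)$ is a two-sided constraint that fixes the allowed numerator degree as well. Everything else is formal book-keeping with the isomorphism~\eqref{eq:isom} and the identification $\F^\Delta(S)=\Rat(S)$ from Proposition~\ref{pr:generic}.
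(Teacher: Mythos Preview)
Your overall architecture matches the paper exactly: reduce $\M^\Delta(S)=\M(S)$ to $\F^\Delta(S)=\F(S)$ via the Fantappi\`e isomorphism, invoke Proposition~\ref{pr:generic} for $\F^\Delta(S)=\Rat(S)$, and conclude by showing $\F(S)\subseteq\Rat(S)$. The only substantive step is the numerator bound $\deg P\le N-d-1$, and here your argument has a gap.

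You claim that along a ray $\bu=t\U$ the integrand $(1-t\lan\x,\U\ran)^{-(d+1)}$ is \emph{uniformly} $O(t^{-(d+1)})$ for $\x\in\cP$. This requires $|\lan\x,\U\ran|$ to be bounded below on $\cP$, i.e.\ the hyperplane $\{\lan\cdot,\U\ran=0\}$ must miss $\cP$. Your genericity condition on $\U$ (avoiding the zero locus of the leading form of $\Phi_S$) only guarantees $\lan\bv,\U\ran\neq 0$ at the \emph{vertices}; it says nothing about interior points. If $0\in\conv(S)$, no real $\U$ separates the origin from $\cP$, and your uniform bound fails. A crude estimate of the integral along, say, $t=iR$ then only yields $O(R^{-1})$, not $O(R^{-(d+1)})$. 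The argument can be repaired by first translating so that $0\notin\conv(S)$ (the hypothesis and conclusion are translation-invariant), but as written the step does not stand.

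The paper avoids this analytic detour entirely: take any dissection $\cT$ of $\cP$ and use Corollary~\ref{cor:simplex} to write $F_\cP=\sum_{\Delta\in\cT}c_\Delta/\prod_{\bv\in\Verti(\Delta)}(1-\lan\bv,\bu\ran)$. Over the common denominator $\prod_{\bv\in\Verti(\cT)}(1-\lan\bv,\bu\ran)$ the numerator has degree $|\Verti(\cT)|-(d+1)$; Proposition~\ref{prop:gener} then cancels the factors coming from $\Verti(\cT)\setminus\Verti(\cP)$, and padding up to $\Phi_S$ gives $\deg P\le N-d-1$ directly. This is what the paper means by ``for obvious reasons---take an arbitrary dissection and sum over its simplices.''
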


Corollary~\ref{cor:generic} implies that for  strongly non-degenerate $S$, the
dimension  of all these   linear spaces equals $\binom{N-1}{d}$. Note that
Corollary~\ref{cor:generic} settles Theorem~\ref{th:fund} for the strongly
non-degenerate $S$. 

\medskip
Our final  goal is  to explicitly solve  the following inverse moment problem. 
\medskip

\begin{problem}
 Given a strongly non-degenerate spanning set $S\subset \bR^d$, $|S|=N$, find the unique 
 polytopal measure in  $\M(S)$ with a given  set of all moments up to order $N-d-1$. 
\end{problem}
  
  \medskip
  We start with the following simple observation.

\begin{lemma}\label{lm:McL} Given an arbitrary spanning set $S\subset \bR^d$, $|S|=N$, and an arbitrary polynomial $T(\bu)$ of degree at most $N-d-1$, there exists a unique rational function $R(\bu)=P(\bu)/\Phi_S(\bu)$ with Taylor polynomial of degree $N-d-1$ at the origin equal to 
$T(\bu)$. Namely, $P(\bu)=\left[T(\bu) \Phi_S(\bu)\right]_{N-d-1}$, where $\left[\cdot\right]_{N-d-1}$ 
stands for the truncated polynomial with all monomials up to degree $N-d-1$. %(By the Maclaurin polynomial we mean the Taylor polynomial at the origin.) 
\end{lemma}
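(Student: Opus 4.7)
The plan is to pass to the ring of formal power series $\bR[[u_1,\dots,u_d]]$. The crucial observation is that $\Phi_S(\bu)$ is a \emph{unit} in this ring, since $\Phi_S(0)=\prod_{i=1}^{N}1=1$. Therefore every rational function of the form $R(\bu)=P(\bu)/\Phi_S(\bu)$ admits a well-defined Taylor expansion at the origin, and the statement ``the Taylor polynomial of $R$ of degree $N-d-1$ equals $T$'' translates into the clean congruence
\[
P(\bu)\equiv T(\bu)\,\Phi_S(\bu)\pmod{\mathfrak m^{N-d}},
\]
where $\mathfrak m$ is the maximal ideal generated by $u_1,\dots,u_d$.

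From this reformulation both existence and uniqueness will follow with no further input. For \emph{uniqueness}, suppose $R_1=P_1/\Phi_S$ and $R_2=P_2/\Phi_S$ are two such rational functions, with $\deg P_1,\deg P_2\le N-d-1$ and identical Taylor polynomials of degree $N-d-1$. Multiplying by the unit $\Phi_S$, I get $P_1-P_2\in\mathfrak m^{N-d}$, but $P_1-P_2$ is a polynomial of degree at most $N-d-1$, hence $P_1=P_2$. For \emph{existence}, I set $P(\bu):=[T(\bu)\Phi_S(\bu)]_{N-d-1}$ and write $T\Phi_S=P+Q$ with $Q\in\mathfrak m^{N-d}$. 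Dividing by $\Phi_S$ gives $R=P/\Phi_S=T-Q/\Phi_S$. Since $\Phi_S$ is a unit, $Q/\Phi_S\in\mathfrak m^{N-d}$, so the Taylor polynomial of $R$ of degree $N-d-1$ agrees with $T$, as required.

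There is essentially no obstacle: the whole statement is a formal consequence of $\Phi_S(0)=1$ combined with a degree count. The only point worth stressing is that the \emph{particular} formula $P=[T\Phi_S]_{N-d-1}$ is forced, not merely one valid choice, precisely because the space of numerator polynomials of degree at most $N-d-1$ has the same dimension $\binom{N-1}{d}+\cdots$ as the space of target Taylor polynomials of the same degree, and the map $P\mapsto [P\Phi_S^{-1}]_{N-d-1}$ between them is an isomorphism whose inverse is $T\mapsto [T\Phi_S]_{N-d-1}$.
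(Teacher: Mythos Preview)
Your argument is correct. The paper itself does not supply a proof of this lemma at all: it is introduced as ``the following simple observation'' and then used without further justification, so your write-up in fact fills in exactly the details the authors leave implicit. The key point---that $\Phi_S(0)=1$ makes $\Phi_S$ a unit in $\bR[[u_1,\dots,u_d]]$, so that the map $P\mapsto[P/\Phi_S]_{N-d-1}$ is a linear bijection on polynomials of degree at most $N-d-1$ with inverse $T\mapsto[T\Phi_S]_{N-d-1}$---is precisely what the paper is taking for granted.
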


For $S=\{\bv_1,\ldots , \bv_N\}\subset \bR^d$ strongly non-degenerate, we give an explicit inversion formula determining the densities of an unknown polytopal measure  having a given set of moments up to order $N-d-1$  on each  simplex in a natural basis of $\M^\Delta(S)$. In view of Lemma~\ref{lm:McL} we can assume that we are already given an arbitrary rational function $R(\bu)=P(\bu)/\Phi_S(\bu)$, where $\deg P(\bu)\le N-d-1$, and we want to determine the densities of the required signed measure from $\M(S)$ in terms of  numerator $P(\bu)$. 

From now on we shall choose the basis of $\M^\Delta(S)$ consisting of the standard measures of all simplices containing the last vertex $\bv_N$, see Lemma~\ref{lm:simplex} below. Let  $\LL=\{l_1,l_2,....,l_{N-1}\}$ be the $(N-1)$-tuple of  linear forms corresponding to  vertices $\bv_1,\bv_2,\ldots, \bv_{N-1}$, where $l_i(\bu)=1-\lan\bv_i,\bu\ran$.  Consider the linear span $V_\LL$ of all possible products of the form $l_{j_1}\cdot l_{j_2} \cdot ... \cdot l_{j_{N-d-1}},\; 1\le j_1<j_2<...<j_{N-d-1}$.  There are $\binom{N-1}{d}$ such products, and each of them is a polynomial of degree at most  $N-d-1$.  On the other hand,  the dimension of the space $Pol(N-d-1,d)$ of all (inhomogeneous) polynomials of degree at most $N-d-1$ in $d$ variables equals $\binom{N-1}{d}$, as well.  

Define the square matrix $Mat_{S}$ of size $\binom{N-1}{d}$ with entries being coefficients of the above products of linear forms with respect to  the standard monomial basis in $Pol(N-d-1,d)$. We assume that $Mat_{S}$ acts on the space $V_\LL$ of  {\em column} vectors.

\begin{theorem}\label{th:inverse}
For an arbitrary strongly non-degenerate spanning set $S\subset \bR^d,$ $|S|=N$, the matrix $Mat_{S}$ is invertible. Moreover, for a   rational function $R(\bu)=P(\bu)/\Phi_S(\bu)$, where  $P(\bu)$ is an arbitrary polynomial of degree $N-d-1$, there exists a unique  measure $\mu_R\in \M(S)$ with Fantappi{\`e} transform $R(\bu)$. Namely, 
\begin{equation}\label{eq:inv}
\mu_R=Mat_{S}^{-1}(P(\bu)).
\end{equation}
\end{theorem}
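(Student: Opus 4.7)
The plan is to reduce the theorem to bookkeeping on top of three already-established facts: the Fantappi\`{e} isomorphism \eqref{eq:isom}, and the two identifications $\M(S)=\M^\Delta(S)$ (Corollary~\ref{cor:generic}) and $\F^\Delta(S)=\Rat(S)$ (Proposition~\ref{pr:generic}) available under strong non-degeneracy. All of the linear spaces in sight have the common dimension $\binom{N-1}{d}$, and the whole argument is a matter of identifying three different bases of one $\binom{N-1}{d}$-dimensional vector space.

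First I would fix the natural basis of $\M^\Delta(S)$ supplied by Lemma~\ref{lm:simplex}: for every $d$-subset $J\subset\{1,\dots,N-1\}$, form the simplex $\Delta_J=\conv(\bv_N,\{\bv_j:j\in J\})$, which is genuinely a $d$-simplex by strong non-degeneracy. There are exactly $\binom{N-1}{d}$ such simplices. Applying Corollary~\ref{cor:simplex} and clearing denominators to the common denominator $\Phi_S(\bu)$ yields
$$F_{\Delta_J}(\bu)=\frac{d!\,\Vol(\Delta_J)\prod_{i\in\{1,\dots,N-1\}\setminus J} l_i(\bu)}{\Phi_S(\bu)}.$$
The numerator is a scalar multiple of a product of exactly $N-d-1$ forms from $\LL$, hence lies in $V_\LL\subseteq Pol(N-d-1,d)$.

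Next I would observe that the Fantappi\`{e} transform is injective on $\M(S)$ by \eqref{eq:isom}, and by Proposition~\ref{pr:generic} it maps $\M(S)=\M^\Delta(S)$ bijectively onto $\Rat(S)$. Consequently, the numerators $\{d!\,\Vol(\Delta_J)\prod_{i\notin J\cup\{N\}}l_i\}_J$ are linearly independent, so the $\binom{N-1}{d}$ products $\{l_{j_1}\cdots l_{j_{N-d-1}}\}$ are linearly independent. Since $\dim Pol(N-d-1,d)=\binom{N-1}{d}$ too, and $V_\LL\subseteq Pol(N-d-1,d)$, this forces $V_\LL=Pol(N-d-1,d)$. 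The matrix $Mat_S$ is then precisely the change-of-basis matrix between the two bases of this single space, so it is invertible.

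Finally, given any $R(\bu)=P(\bu)/\Phi_S(\bu)$ with $\deg P\leq N-d-1$, the surjectivity of the Fantappi\`{e} map onto $\Rat(S)$ and its injectivity furnish a unique $\mu_R\in\M(S)$ with $F_{\mu_R}=R$. To make the formula \eqref{eq:inv} explicit, one expands the polynomial $P$ in the product-of-linear-forms basis of $V_\LL$ via $Mat_S^{-1}$; the resulting coefficients, weighted by the $d!\,\Vol(\Delta_J)$ factors appearing in the Fantappi\`{e} transforms of the basis simplices, are exactly the densities of $\mu_R$ against the basis $\{\mu_{\Delta_J}\}_J$. There is no real obstacle beyond this bookkeeping; the only delicate point is respecting the $d!\,\Vol(\Delta_J)$ normalization when translating between the product basis of $V_\LL$ and the simplex basis of $\M^\Delta(S)$, which is why the simplices through $\bv_N$ are chosen as the canonical basis from the outset.
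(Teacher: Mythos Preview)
Your proposal is correct and follows essentially the same route as the paper: both reduce to clearing the common denominator $\Phi_S(\bu)$ to arrive at the linear system $Mat_S\cdot\bw=P$, with invertibility of $Mat_S$ drawn from the identification $V_\LL=Pol(N-d-1,d)$ already established inside Proposition~\ref{pr:generic} (Lemma~\ref{lm:product}) together with the dimension count $\dim Pol(N-d-1,d)=\binom{N-1}{d}$. The paper's proof simply writes this linear system down directly, whereas you frame it more abstractly as a change-of-basis between two bases of a common $\binom{N-1}{d}$-dimensional space, but the content is identical.
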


\medskip
\begin{rema+}
A detailed explanation  of the meaning of \eqref{eq:inv}  can be found in the proof of  Theorem~\ref{th:inverse}, see also Example~\ref{ex:two} below. 
An explicit formula for the matrix $Mat_{S}^{-1}$  is given in Lemma~\ref{lm:mat}. 
\end{rema+}
\medskip

Recall that  a spanning set $S$ is weakly non-degenerate if any $(d+2)$-tuple
of its points is spanning. With minor changes, the above solution of the inverse
moment problem can be adapted to this more general
case. In order not to overload the introduction we refer the readers interested
in this situation to Section~\ref{s4}.  The case of an arbitrary spanning set
$S$, however, remains unsolved and offers several interesting challenges in
matroid theory. We hope to return to it in the future. 

It will be convenient to work with scaled volumes of simplices, which we call {\em weights}.
\begin{defi+}  Given a signed measure $\mu$ in $\bR^d$ and a $d$-dimensional simplex $\De\subset \bR^d$, we define the {\it weight} $w_\De$ of $\De$ by the formula: 
\begin{equation}\label{eq:weight}
w_\De=d!\int_\De d\mu.
\end{equation}
 In other words, the density $d_\De$ of the measure in question  which should be placed at $\De$ equals $$d_\De=\frac{w_\De}{d! \Vol(\De)}.$$
\end{defi+} 

We finish the introduction by explicitly solving  the above inverse problem for a concrete $5$-tuple of points in $\bR^2$.
 
\begin{example}\label{ex:two} 
Set $S=\{\bv_1,\bv_2,\bv_3,\bv_4,\bv_5\}$ where $\bv_1=(1,0), \bv_2=(2,1), \bv_3=(1,2), \bv_4=(0,1), \bv_5=(0,0)$.  The corresponding set $\LL=\{l_1,l_2,l_3,l_4\}$ of linear forms is given by $l_1=1-u_1,\,   l_2=1-2u_1-u_2,\,   l_3=1-u_1-2u_2,\,   l_4=1-u_2$. Additionally, $l_5=1$. We are considering the basis of $\M^\De(S)$ consisting of (the standard measures of) 6 triangles containing $\bv_5$. Therefore we need 6 quadratic forms obtained as pairwise products $l_il_j, \, 1\le i<j \le 4$. We get 
$$\begin{cases}
l_1l_2=1-3u_1-u_2+2u_1^2+u_1u_2\\
l_1l_3=1-2u_1-2u_2+u_1^2+2u_1u_2\\
l_1l_4=1-u_1-u_2+u_1u_2\\
l_2l_3=1-3u_1-3u_2+2u_1^2+5u_1u_2+2u_2\\
l_2l_4=1-2u_1-2u_2+2u_1u_2+u_2^2\\
l_3l_4=1-u_1-3u_2+u_1u_2+2u_2^2.\\
\end{cases}
$$
Notice that $l_1l_2$ corresponds to  triangle $\De_{345}$, $l_1l_3$ to $\De_{245}$, $l_1l_3$ to $\De_{245}$, $l_1l_4$ to $\De_{234}$, $l_2l_3$ to $\De_{145}$, $l_2l_4$ to $\De_{135}$, and $l_3l_4$ to $\De_{125}$.  
Ordering monomials spanning the space $Pol(2,2)$ as $(1,u_1,u_2,u_1^2,u_1u_2, u_2^2)$, we get the $6\times 6$-matrix $Mat_S$ and its inverse $Mat_S^{-1}$ as follows

\[
\begin{blockarray}{rrrrrrr}
\begin{block}{cc\BAmulticolumn{4}{c}c}
&&Mat_S= \\
\end{block}
 & l_1l_2 & l_1l_3 & l_1l_4 & l_2l_3&l_2l_4&l_3l_4 \\
\begin{block}{r(rrrrrr)}
1&  1& 1& 1& 1& 1& 1 \\
u_1   &-3&-2&-1&-3&-2&-1\\
u_2   &-1&-2&-1&-3&-2&-3\\
u_1^2 &2&1&0&2&0&0\\
u_1u_2&1&2&1&5&2&1\\
u_2^2 &0&0&0&2&1&2\\
\end{block}
\end{blockarray}
\quad  
\begin{blockarray}{rrrrrrr}
\begin{block}{cc\BAmulticolumn{4}{c}c}
&& 4Mat_S^{-1}= \\
\end{block}
 & 1 & u_1 & u_2 & u_1^2&u_1u_2&u_2^2 \\
\begin{block}{r(rrrrrr)}
l_1l_2&1&-1&1&1&1&-1\\
l_1l_3&-4&0&-4&0&0&-4\\
l_1l_4&9&3&3&1&1&1\\
l_2l_3&1&1&1&1&1&1\\
l_2l_4&-4&-4&0&-4&0&0\\
l_3l_4&1&1&-1&1&-1&1\\
\end{block}
\end{blockarray}
\]
\iffalse
\[
\quad Mat_S=\qquad\qquad\qquad  \qquad   \qquad     4Mat_S^{-1}= 
\]
\[
\bordermatrix{~ & l_1l_2 & l_1l_3 & l_1l_4 & l_2l_3&l_2l_4&l_3l_4 \cr 
               1&       1&       1&       1&      1&     1&     1 \cr
u_1   &-3&-2&-1&-3&-2&-1\cr
u_2   &-1&-2&-1&-3&-2&-3\cr
u_1^2 &2&1&0&2&0&0\cr
u_1u_2&1&2&1&5&2&1\cr
u_2^2 &0&0&0&2&1&2}   
\quad
\bordermatrix{~ & 1 & u_1 & u_2 & u_1^2&u_1u_2&u_2^2 \cr  
l_1l_2&1&-1&1&1&1&-1\cr
l_1l_3&-4&0&-4&0&0&-4\cr
l_1l_4&9&3&3&1&1&1\cr
l_2l_3&1&1&1&1&1&1\cr
l_2l_4&-4&-4&0&-4&0&0\cr
l_3l_4&1&1&-1&1&-1&1}. 
\]
%\iffalse
$$Mat_S=\bordermatrix{~ & l_1l_2 & l_1l_3 & l_1l_4 & l_2l_3&l_2l_4&l_3l_4 \cr 
1&1&1&1&1&1&1\cr
u_1&-3&-2&-1&-3&-2&-1\cr
u_2&-1&-2&-1&-3&-2&-3\cr
u_1^2&2&1&0&2&0&0\cr
u_1u_2&1&2&1&5&2&1\cr
u_2^2&0&0&0&2&1&2}
\quad  4Mat_S^{-1}=\bordermatrix{~ & 1 & u_1 & u_2 & u_1^2&u_1u_2&u_2^2 \cr  
l_1l_2&1&-1&1&1&1&-1\cr
l_1l_3&-4&0&-4&0&0&-4\cr
l_1l_4&9&3&3&1&1&1\cr
l_2l_3&1&1&1&1&1&1\cr
l_2l_4&-4&-4&0&-4&0&0\cr
l_3l_4&1&1&-1&1&-1&1}. 
$$
\fi
(For \TeX nical reasons we give $4Mat_S^{-1}$ above.) 
Thus, given an arbitrary  rational function $R(u_1,u_2)=P(u_1,u_2)/\Phi_S(u_1,u_2)$ where  
$P(u_1,u_2)=a_{00}+a_{1,0}u_1+a_{0,1}u_2+a_{2,0}u_1^2+a_{11}u_1u_2+a_{02}u_2^2$ is a
polynomial  of degree at most $2$ and $\Phi_S(u_1,u_2)=l_1l_2l_3l_4l_5$, we get 
$$\begin{cases}
w_{345}=\frac{1}{4}(a_{00}-a_{10}+a_{01}+a_{20}-a_{11}+a_{02})\\
w_{245}=-a_{00}-a_{01}-a_{02}\\
w_{235}=\frac{1}{4}(9a_{00}+3a_{01}+3a_{10}+a_{20}+a_{11}+a_{02})\\
w_{145}=\frac{1}{4}(a_{00}+a_{01}+a_{10}+a_{20}+a_{11}+a_{02})\\
w_{135}=-a_{00}-a_{10}-a_{20}\\
w_{125}=\frac{1}{4}(a_{00}+a_{10}-a_{01}+a_{20}-a_{11}+a_{02}),\\
\end{cases}
$$
where $w_{ijk}$ is the weight of the signed measure to be placed on $\De_{ijk}$,  see \eqref{eq:weight}.

To illustrate  all steps of  solution of our inverse moment problem, %with the fixed set of vertices 
assume that we are looking for a polygonal measure with the vertex set $S$
and (ad hoc chosen) moments $m_{00}=1, m_{10}=2, m_{01}=3, m_{20}=4, m_{11}=5, m_{02}=6$. Then its normalized moment generating function $F_\mu(\bu)$ satisfies the relation 
$$F_\mu(\bu)= 1\frac{2!}{0!0!}+2\frac{3!}{1!0!}u_1+3\frac{3!}{0!1!}u_2+4\frac{4!}{2!0!}u_1^2+5\frac{4!}{1!1!}u_1u_2+6\frac{4!}{0!2!}u_2^2+\dots=\frac{P(u_1,u_2)}{l_1l_2l_3l_4l_5},$$
where $P(u_1,u_2)$ is a (non-homogeneous) polynomial of at most second degree. Thus, truncating the product of the left-hand side and ${l_1l_2l_3l_4l_5}$ up to the second degree, we obtain
$$P(u_1,u_2)= 2+4u_1+10u_2+10u_1^2+24u_1u_2+10u_2^2,$$
i.e. $a_{00}=2, a_{10}=4, a_{01}=10, a_{20}=10, a_{11}=24, a_{02}=10$. Thus  $w_{345}=1,  w_{245}=-22,  w_{235}=26, 
w_{145}=15, w_{135}=-16,  w_{125}=-2$. The areas of the corresponding triangles are equal to:  
$Area{(\De_{345})}=\frac{1}{2}; Area({\De_{245}})=1;  Area{(\De_{235})}=\frac{3}{2}; Area{(\De_{145})}=\frac{1}{2}; Area{(\De_{135})}=1; Area{(\De_{125})}=\frac{1}{2}.$  This implies that the densities of the measure of the corresponding triangles are equal to $d_{345}=1, d_{245}=-11, d_{235}=\frac{26}{3},  d_{145}=15, d_{135}=-8,    d_{125}=-2$.  To obtain the final  densities in the convex hull $\conv(S)$ of $S$, one has to decompose $\conv(S)$ into domains obtained by removing from $\conv(S)$  the set of all hyperplanes  spanned by  vertices in $S$. For each such domain,  we should add up the densities of all basic simplices containing this domain.  The resulting measure  is shown in Fig.~\ref{fg:T1}. 

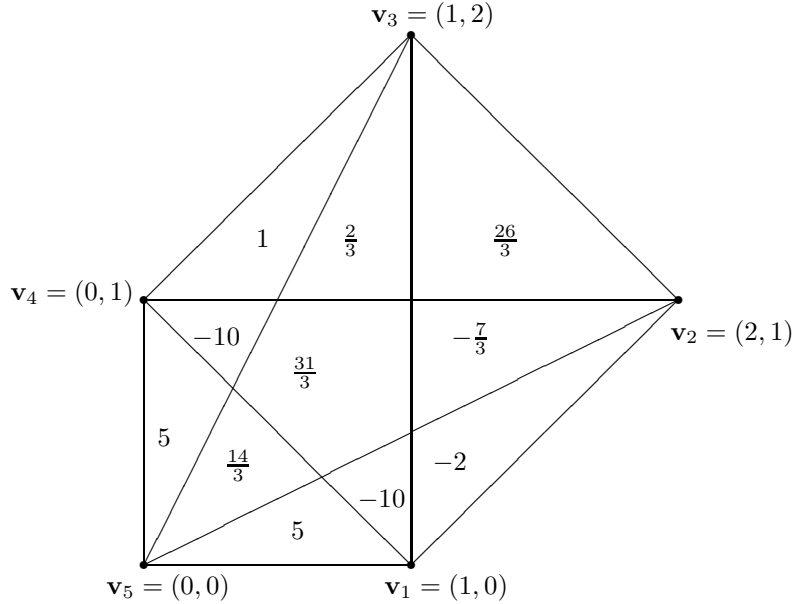
\begin{figure}[h]
\begin{center}
\begin{picture}(360,230)(0,0)
\put(120,20){\line(1,0){100}}
\put(220,20){\line(1,1){100}}
\put(120,20){\line(0,1){100}}
\put(120,120){\line(1,1){100}}
\put(220,220){\line(1,-1){100}}
\put(120,20){\line(2,1){200}}
\put(120,20){\line(1,2){100}}
\put(120,120){\line(1,-1){100}}
\put(120,120){\line(5,0){200}}
\put(220,20){\line(0,5){200}}

\put(106,9){$\bv_5=(0,0)$}
\put(210,9){$\bv_1=(1,0)$}
\put(317,105){$\bv_2=(2,1)$}
%\put(325,120){$\bv_2=(2,1)$}
\put(205,225){$\bv_3=(1,2)$}
\put(70,120){$\bv_4=(0,1)$}

\put(175,30){$5$}
\put(200,42){$-10$}
\put(228,56){$-2$}
\put(250,140){$\frac{26}{3}$}
\put(175,90){$\frac{31}{3}$}
\put(235,103){$-\frac{7}{3}$}
\put(194,140){$\frac{2}{3}$}
\put(162,140){$1$}
%\put(210,105){$3$}
\put(150,55){$\frac{14}{3}$}
\put(125,65){$5$}
%\put(140,125){$1$}
\put(138,103){$-10$}

\put(120,20){\circle*{3}}
%\put(120,20){\circle{8}}
\put(220,20){\circle*{3}}
\put(320,120){\circle*{3}}
\put(220,220){\circle*{3}}
\put(120,120){\circle*{3}}

\end{picture}
\end{center}
\caption{Final measure in Example~\ref{ex:two}.  \label{fg:T1}}
\end{figure}
\end{example}

\begin{rema+} Domains into which the convex hull $\conv(S)$ is cut by the hyperplanes spanned by $S$ were introduced in \cite{AlGelZel} where they were called {\it chambers}.  The incidence matrix of the simplices spanned by $S$ and those chambers was studied in some detail in \cite {Al1},\cite{Al2}. This matrix allows to formalize the last step of construction of the above polygonal measure, where information on the densities of the simplices is transformed into information on the densities of the chambers.  But, in general, already the number of chambers is a complicated invariant of the set $S$. It seems that the general problem of constructing the set of chambers and the corresponding incidence matrix in terms of  a given $S$  is quite non-trivial.  
\end{rema+} 

\medskip
\begin{ack}
The second author is grateful to the Mathematics Department of Stockholm
University for the hospitality in June 2011 when this project was initiated.
The third author wants  to acknowledge the hospitality of the School of
Physical and Mathematical Sciences, Nanyang Technological University  in April
2012 when this project was completed. We want to thank Sinai Robins
and Mich{\`e}le Vergne for numerous discussions of the topic. 
We acknowledge extremely helpful answers and comments on our questions on \url{mathoverflow.net}, 
in particular ones by David Eppstein, Dirk Lorenz, Igor Pak, David Speyer, and Gjergji Zaimi.
 Finally, the third author wants to
thank late Mikael Passare (who unfortunately left us so early) for  discussions
of the properties of Fantappi{\`e} transformation and for pointing out
reference~\cite{APS} in September 2011. 
\end{ack}

\section{Proving results on convex polytopes}\label{s2}
Following Brion-Lawrence-Khovanskii-Pukhlikov-Barvinok, see 
\cites{Bar2,BR,GLPR,La}, we
define for each vector $\z\in \bR^d$, the $j$-th \emph{axial} moment $\mu_j(\z)$ of a simple convex polytope $\cP$ with respect to $\z$ as
$$\mu_j(\z)=\int_{\cP}\lan \x,\z\ran ^jd\x.$$
We will use the following important statement, cf. e.g. \cite{BR}*{Theorem 10.5}.  
\begin{theorem}\label{th:BLKPB}
The moment $\mu_j(\z)$ satisfies
\begin{equation}\label{eq:BrBa}
\mu_j(\z)=\frac{(-1)^dj!}{(j+d)!}\sum_{\bv\in \Verti}\lan \bv,\z\ran ^{j+d}D_\bv(\z),
\end{equation}
where $D_\bv(\z):=\frac{|\det K_\bv|}{\prod_{j=1}^d\lan w_j(\bv),\z\ran }$, and $\z$ is an arbitrary vector for which the products $\prod_{j=1}^d\lan w_j(\bv),\z\ran $, $\bv\in\Verti$, do not vanish.   Moreover, the following  identities hold:
\begin{equation}\label{eq:Add}
\sum_{\bv\in \Verti}\lan \bv,\z\ran ^jD_\bv(\z)=0,
\quad\text{for $0\leq j\leq d-1$.}
\end{equation}
\end{theorem}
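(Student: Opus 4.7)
The plan is to derive both parts of the theorem from the exponential moment generating function
\begin{equation*}
\Psi(t,\z):=\int_\cP e^{t\lan \x,\z\ran}d\x=\sum_{j\ge 0}\frac{t^j}{j!}\mu_j(\z),
\end{equation*}
which is entire in $t$ because $\cP$ is compact. The key classical input I would use is Brion's polyhedral exponential identity, which asserts that as a meromorphic function of the covector parameter,
\begin{equation*}
\int_\cP e^{\lan \x,\z\ran}d\x=\sum_{\bv\in\Verti}\int_{\bv+K_\bv}e^{\lan \x,\z\ran}d\x,
\end{equation*}
even though each individual right-hand side integral is only absolutely convergent on the open cone dual to $K_\bv$. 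For simple $\cP$ this can be produced by a short inclusion–exclusion after triangulating each tangent cone into simplicial subcones, where the identity reduces to an elementary computation over an orthant.

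Next, I would compute each cone integral explicitly. Parametrizing $\bv+K_\bv$ via $\bv+\sum_{k=1}^d s_k w_k(\bv)$ with $s_k\ge 0$, the Jacobian is $|\det K_\bv|$, and the one-dimensional integrals separate to give
\begin{equation*}
\int_{\bv+K_\bv}e^{t\lan \x,\z\ran}d\x=\frac{(-1)^d|\det K_\bv|\,e^{t\lan \bv,\z\ran}}{t^d\prod_{k=1}^d\lan w_k(\bv),\z\ran}=\frac{(-1)^d\,e^{t\lan \bv,\z\ran}}{t^d}\,D_\bv(\z).
\end{equation*}
Summing over vertices and expanding the exponentials,
\begin{equation*}
\Psi(t,\z)=\frac{(-1)^d}{t^d}\sum_{\bv\in\Verti}e^{t\lan \bv,\z\ran}D_\bv(\z)=\frac{(-1)^d}{t^d}\sum_{k\ge 0}\frac{t^k}{k!}\sum_{\bv\in\Verti}\lan \bv,\z\ran^k D_\bv(\z).
\end{equation*}

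The final step is a Taylor-coefficient comparison. Since $\Psi(t,\z)$ is holomorphic at $t=0$, the apparent pole of order $d$ on the right must be spurious; this forces $\sum_{\bv\in\Verti}\lan \bv,\z\ran^k D_\bv(\z)=0$ for $0\le k\le d-1$, which is exactly \eqref{eq:Add}. Matching the coefficient of $t^j$ on both sides (taking $k=j+d$ in the inner sum) then yields \eqref{eq:BrBa}.

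The main obstacle is the honest justification of Brion's identity, since for generic $\z$ no single cone integral converges. The standard remedy is to first restrict $\z$ to the nonempty open set on which all $\lan w_k(\bv),\z\ran$ have the same sign simultaneously on each cone (which is possible after an affine change of variables localized at each vertex), establish the identity there by direct computation, and then extend by analytic/meromorphic continuation, using the non-vanishing hypothesis on the products $\prod_k\lan w_k(\bv),\z\ran$ to ensure each $D_\bv(\z)$ is well defined. The remainder of the argument is a routine manipulation of formal power series, and the hypothesis of simplicity of $\cP$ is used only to ensure that every tangent cone $K_\bv$ is simplicial, so that the cone-by-cone evaluation above is literally valid.
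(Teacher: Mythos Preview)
The paper does not supply its own proof of this statement: Theorem~\ref{th:BLKPB} is quoted as a known result, with a pointer to \cite{BR}*{Theorem~10.5} (and the surrounding references to Brion, Lawrence, Khovanskii--Pukhlikov, and Barvinok). So there is no in-paper argument to compare against; what matters is whether your sketch recovers the statement correctly.

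Your approach --- pass through the exponential generating function $\Psi(t,\z)=\int_\cP e^{t\lan\x,\z\ran}d\x$, apply Brion's decomposition into tangent-cone integrals, evaluate each simplicial cone integral explicitly, and then read off \eqref{eq:Add} from holomorphy at $t=0$ and \eqref{eq:BrBa} from the coefficient of $t^j$ --- is exactly the standard derivation one finds in the cited sources, and the algebra is correct.

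One technical point deserves tightening. You write that one can ``restrict $\z$ to the nonempty open set on which all $\lan w_k(\bv),\z\ran$ have the same sign simultaneously on each cone.'' For a \emph{bounded} polytope this set is empty: the tangent cones together cover all directions, so no real $\z$ makes every cone integral absolutely convergent at once. The honest route is the one you also mention --- compute each $\int_{\bv+K_\bv}e^{t\lan\x,\z\ran}d\x$ on its own dual cone, observe that the result extends to the rational function $(-1)^d e^{t\lan\bv,\z\ran}D_\bv(\z)/t^d$ of $\z$, and then invoke Brion's theorem as an identity of meromorphic functions (or, equivalently, use the Lawrence--Varchenko signed decomposition in which all cones can be made to open into a common half-space). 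With that correction your proof is complete and matches the literature the paper cites.
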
 
\begin{proof}[Proof of Theorem~\ref{th:weight1}]
To  prove  \eqref{eq:Main}, consider the generating function  $$\Psi_\z(u)=\sum_{j=0}^\infty\frac{(j+d)!}{j!}\mu_j(\z)u^j,$$
where  $u\in \bR$. 
Formula \eqref{eq:BrBa} implies that $\Psi_\z(u)$ is rational. Indeed,
\begin{multline*}
	\Psi_\z(u)=\sum_{j=0}^\infty(-1)^d\sum_{\bv\in \Verti}\lan \bv,\z\ran ^{j+d}\frac{|\det K_{\bv}|u^j}{\prod_{k=1}^d\lan w_k(\bv),\z\ran }=\\
=(-1)^d\sum_{\bv\in \Verti}
\frac{\lan \bv,\z\ran ^d|\det K_{\bv}|}{\prod_{k=1}^d\lan w_k(\bv),\z\ran }
\sum_{j=0}^\infty\lan \bv,\z\ran ^ju^j=\sum_{\bv\in \Verti}
\frac{\lan \bv,\U\ran ^d|\det K_{\bv}|}{\prod_{k=1}^d\lan w_k(\bv),\U\ran }\cdot 
\frac{(-1)^d}{1-\lan \bv,\U\ran },
\end{multline*}
where $\U=u\z$. On the other hand, using  the multinomial coefficients  
$\binom{|J|}{J}=\frac{|J|!}{j_1 !\dots j_d!}$ of 
 multiindices  $J=(j_1,\dots,j_d)\vdash |J|$, one gets
\begin{multline*}
\int_\cP\lan \x,\z\ran ^jd\x=\int_{\cP}\left(\sum_{i=1}^dx_iz_i\right)^jd\x=\sum_{J\vdash j}\binom {j} {J}\z^J\int_{\cP}\bx^J d\x=\sum_{J\vdash j}\binom {j} {J}\z^J m_{_J}(\cP),
\end{multline*}
where $m_{_J}(\cP)=m_{_J}(\mu_\cP)$. 
Therefore,
\begin{multline*}
F_\cP(u\z):=F_\cP(uz_1,...,uz_d):=\sum_{j=0}^{\infty}\sum_{(j_1,...,j_d)\vdash j}\frac{(j+d)!}{j_1!\dots j_d!}m_{j_1,\dotsc,j_d}(uz_1)^{j_1}\cdots (uz_d)^{j_d}\\
%=\sum_{j=0}^{\infty}\frac{(j+d)!}{j!}\left(\sum_{J:=(j_1,...,j_d)\vdash j}\frac{j!}{j_1!\cdots j_d!}m_{j_1,...,j_d}u_1^{j_1}\cdots u_d^{j_d}\right)  =    \\ 
=\sum_{j=0}^{\infty}\frac{(j+d)!}{j!}\left[\sum_{J:=(j_1,...,j_d)\vdash j}\binom{j}{J}m_{_J}(\cP)\z^J\right]u^j = \sum_{j=0}^{\infty}\frac{(j+d)!}{j!}\mu_j(\z)u^j=\Psi_\z(u),%\\ 
%=\sum_{J:=(j_1,...,j_d)\vdash j}\frac{(j+d)!}{j!} \binom{j}{J}  m_{J}(u\z)^J.%=\sum_{J:=(j_1,...,j_d)\vdash j}\frac{(j+d)!}{j!} \frac{j!}   {j_1!\cdots j_d!}  m_{j_1,...,j_d}(uz_1)^{j_1}\cdots (uz_d)^{j_d}\sum_{j=0}^\infty \frac{(j+d)!}{j!}\mu_j(\z)u^j=\Psi_{\z}(u).
\end{multline*}
and \eqref{eq:Main} follows.  

In view of relations~(\ref{eq:Add}), the right-hand side of \eqref{eq:Main} can be rewritten as \eqref{eq:Mainsimp}.
%\begin{equation}\label{eq:Main2}
%F_{\cP}^1(u_1,...,u_d)=(-1)^d\sum_{v\in \Verti}\frac{|\det K_v |}{\prod_{j=1}^d\lan w_j(v),U\ran }\cdot \frac{1}{1-\lan v,U\ran }.
%\end{equation}
Indeed, writing $(1-\lan\bv,\bu\ran)^{-1}=\sum_{j=0}^\infty\lan\bv,\bu\ran^j$ and
expanding  \eqref{eq:Main} with respect  to $j$th powers of $\lan\bv,\bu\ran$, 
we see that \eqref{eq:Add} implies that for $j<d$ 
the sum of all terms  $\lan \bv,\U\ran ^j$ vanishes.
\end{proof}

\begin{proof}[Proof of Corollary~\ref{cor:simplex}]
Let $\Verti=(\bv_0,\bv_1,\ldots,\bv_d)$. Then
for each $j\neq i$, we have $w_j(\bv_i)=\bv_j-\bv_i$.  Hence $|\det K_{\bv_i}|$  does not depend upon $i$ 
and equals $d! \Vol(\Delta)$. The right-hand side of \eqref{eq:Main} becomes 
\begin{multline*}
(-1)^d d! \Vol(\Delta)\sum_{i=0}^d\frac{\lan \bv_i,\U\ran ^d}{\prod_{j=1}^d\lan w_j(\bv_i),\U\ran }\cdot \frac{1}{1-\lan \bv_i,\U\ran }=\\
=(-1)^d d! \Vol(\Delta)\sum_{i=0}^d\frac{\lan \bv_i,\U\ran ^d}{\prod_{j\ne i}\lan \bv_j-\bv_i,\U\ran }\cdot \frac{1}{1-\lan \bv_i,\U\ran }=\\
=(-1)^d d! \Vol(\Delta)\sum_{i=0}^d\frac{\zeta_i^d}{\prod_{j\ne i}(\zeta_j-\zeta_i)}\cdot \frac{1}{1-\zeta_i},
\end{multline*}
where $\zeta_i=\lan \bv_i,\U\ran $.
Computing the common denominator of the latter, we obtain
$$F_\Delta(\U)=\frac  {(-1)^d d! \Vol (\Delta)}   {\prod_{i=0}^{d}(1-\zeta_i)} 
\frac{\sum\limits_{i=0}^d\left[\prod\limits_{k>l, k\ne i\ne l}(\zeta_k-\zeta_l)\prod\limits_{j\ne i}(1-\zeta_j)\right](-1)^i \zeta_i^d}{\prod_{s>t}(\zeta_s-\zeta_t)}.$$
It is convenient to introduce one more linear form $\zeta_{d+1}:=1$, so that the last expression reads as
\begin{equation}\label{eq:fdelta}
F_\Delta(\U)=(-1)^d d! \Vol (\Delta) 
\frac{\sum\limits_{i=0}^d\left[\prod\limits_{\substack{d+1\ge k>l\ge 0,\\ k\ne i\ne l}}(\zeta_k-\zeta_l)\right](-1)^i \zeta_i^d}{\prod\limits_{d+1\ge s>t\ge 0}(\zeta_s-\zeta_t)}.
\end{equation}
To complete the proof, we notice that 
\begin{multline*}
0 = \det \begin{pmatrix}
    1 & 1 &\ldots & 1 & 1 \\
    \zeta_0 & \zeta_1 & \ldots & \zeta_d & 1 \\
    \zeta_0^2 & \zeta_1^2 & \ldots & \zeta_d^2 & 1 \\
    \ldots \\
    \zeta_0^d & \zeta_1^d & \ldots & \zeta_d^d & 1 \\
    \zeta_0^d & \zeta_1^d & \ldots & \zeta_d^d & 1 \\
\end{pmatrix}=1\cdot \det \begin{pmatrix}
    1 & 1 &\ldots & 1  \\
    \zeta_0 & \zeta_1 & \ldots & \zeta_d \\
    \zeta_0^2 & \zeta_1^2 & \ldots & \zeta_d^2 \\
    \ldots \\
    \zeta_0^d & \zeta_1^d & \ldots & \zeta_d^d \\
\end{pmatrix}+\\+ (-1)^{d+1}\sum_{i=0}^{d}\zeta_i^d(-1)^i\cdot\det \begin{pmatrix}
    1 & \ldots & 1 & 1 & \ldots & 1 \\
    \zeta_0 & \ldots & \zeta_{i-1} & \zeta_{i+1} & \ldots & \zeta_{d+1} \\
    \zeta_0^2 & \ldots & \zeta_{i-1}^2 & \zeta_{i+1}^2 & \ldots & \zeta_{d+1}^2 \\
    \ldots \\
    \zeta_0^d & \ldots & \zeta_{i-1}^d & \zeta_{i+1}^d & \ldots & \zeta_{d+1}^d \\
\end{pmatrix}=\\
=\prod_{d\ge k>l\ge 0}(\zeta_k-\zeta_l)+(-1)^{d+1}\sum_{i=0}^{d}\zeta_i^d(-1)^i\cdot\left[\prod\limits_{\substack{d+1\ge k>l\ge 0,\\ k\ne i\ne l}}(\zeta_k-\zeta_l)\right].
\end{multline*}
Indeed, the first matrix has two identical rows and thus vanishing determinant, 
which we expand with respect to the last row.
The last equality is the standard formula for the Vandermonde determinant. Thus we have 
$$
\sum_{i=0}^{d}\zeta_i^d(-1)^i\cdot\left[\prod\limits_{\substack{d+1\ge k>l\ge 0,\\ k\ne i\ne l}}(\zeta_k-\zeta_l)\right]
= 
(-1)^d\prod_{d\ge k>l\ge 0}(\zeta_k-\zeta_l).
$$
Now we plug this formula into \eqref{eq:fdelta} and get 
\[
F_\Delta(\U)= d!\Vol(\Delta) \frac{1}{\prod\limits_{t=0}^{d}(1-\zeta_t)}= \frac{d!\Vol(\Delta)}{\prod\limits_{\bv\in\Verti}(1-\lan \bv,\U\ran)}. \qedhere
\]
\end{proof}

\begin{lemma}\label{lem:diff_h}
Let $\bu=(u_1,\dots,u_d)$ and $\x=(x_1,\dots,x_d)$ be formal variables, and $\ell\in\bR$ .
Then
\begin{equation}\label{eq:diff_h}
\left(\sum_k u_k\frac{\partial}{\partial u_k}+\ell\right)\circ (1-\lan \x,\U\ran)^{-\ell}=
\ell (1-\lan \x,\U\ran)^{-\ell-1}. 
\end{equation}
\end{lemma}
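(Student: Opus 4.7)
The plan is to verify the identity by direct differentiation, using the chain rule on the composite function $(1-\lan\x,\U\ran)^{-\ell}$ and then collecting terms.

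First I would compute a single partial derivative. Since $\frac{\partial}{\partial u_k}\lan\x,\U\ran = x_k$, the chain rule gives
$$\frac{\partial}{\partial u_k}(1-\lan\x,\U\ran)^{-\ell} = \ell\, x_k\,(1-\lan\x,\U\ran)^{-\ell-1}.$$
Multiplying by $u_k$ and summing over $k$ turns the $x_k$'s into $\lan\x,\U\ran$, producing
$$\sum_k u_k\frac{\partial}{\partial u_k}(1-\lan\x,\U\ran)^{-\ell} = \ell\,\lan\x,\U\ran\,(1-\lan\x,\U\ran)^{-\ell-1}.$$

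Next I would use the elementary algebraic identity $\lan\x,\U\ran = 1 - (1-\lan\x,\U\ran)$ to split the right-hand side as
$$\ell\,(1-\lan\x,\U\ran)^{-\ell-1} - \ell\,(1-\lan\x,\U\ran)^{-\ell}.$$
Adding the contribution $\ell\,(1-\lan\x,\U\ran)^{-\ell}$ coming from the $+\ell$ term in the operator cancels the second summand and leaves exactly $\ell\,(1-\lan\x,\U\ran)^{-\ell-1}$, as required.

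There is no real obstacle here; the statement is essentially the Euler-type relation $(E+\ell)f = \ell\,(1-\lan\x,\U\ran)^{-1}f$ for the homogeneous-like function $f=(1-\lan\x,\U\ran)^{-\ell}$, where $E=\sum_k u_k\partial_{u_k}$. The only point to mention is that the computation is purely formal in $\bu$, so it is valid whether one interprets $(1-\lan\x,\U\ran)^{-\ell}$ as a formal power series in $\bu$ with coefficients depending polynomially on $\x$ and $\ell$, or as an honest analytic function in a neighbourhood of $\bu=0$; this is the form in which the lemma will be applied in the proof of Theorem~\ref{th:weight}, via the integral representation \eqref{eq:Mainint}.
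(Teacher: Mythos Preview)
Your proof is correct and follows essentially the same approach as the paper's: compute $u_k\partial_{u_k}(1-\lan\x,\U\ran)^{-\ell}=\ell\,x_k u_k(1-\lan\x,\U\ran)^{-\ell-1}$, sum over $k$, and combine with the $+\ell$ term. The only difference is that you spell out the cancellation via $\lan\x,\U\ran=1-(1-\lan\x,\U\ran)$, whereas the paper leaves that one-line simplification implicit.
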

\begin{proof}
Note that $u_k\frac{\partial}{\partial u_k}\circ (1-\lan \x,\U\ran)^{-\ell}=
x_k u_k \ell  (1-\lan \x,\U\ran)^{-\ell-1}$. Thus 
\begin{multline*}
\left(\sum_k u_k\frac{\partial}{\partial u_k}+\ell\right)\circ (1-\lan \x,\U\ran)^{-\ell}=
\ell  (1-\lan \x,\U\ran)^{-\ell-1} \lan \x,\U\ran + \ell  (1-\lan \x,\U\ran)^{-\ell}\\
=\ell (1-\lan \x,\U\ran)^{-\ell-1}.\qedhere
\end{multline*}
\end{proof} 

\begin{proof}[Proof of \eqref{eq:Mainint}]
For a $d$-variate polynomial $g(\z)$, we denote by $g\left(\bu\frac{\partial}{\partial \bu}\right)$ the differential operator 
$g\left(u_1\frac{\partial}{\partial u_1},\dots,u_d\frac{\partial}{\partial u_d}\right)$. We use the  identity
\begin{equation}\label{eq:iddif}
g\left(\bu\frac{\partial}{\partial \bu}\right)
\circ \sum_{I} a_{_I}\x^I\U^I= \sum_{I\geq 0} g(I) a_{_I} \x^I\U^I,
\end{equation}
which holds for any formal $d$-variate power series $\sum_I a_I \x^I\U^I$ and any $d$-variate polynomial $g(\z)$. (It can be easily verified for monomial $g(\z)$ and then extended by linearity.)  
Setting 
$h(\z):=\prod_{\ell=1}^d\left(\sum_{k=1}^d z_k +\ell\right),$  notice that $h(I)=(|I|+1)(|I|+2)\cdots (|I|+d)$. 
Now using \eqref{eq:iddif} together with  the obvious identity:   
$$(1-\lan\x,\U\ran)^{-1}=\sum_{I\ge 0}\binom{|I|}{I} \x^I\U^I,$$  
 one obtains
\begin{multline*}
F_{\mu}(\U):=\sum_{I\ge 0}\binom{|I|+d}{I} m_{I}(\mu) \U^I =\sum_{I\ge 0}h(I)\binom{|I|}{I} m_{I}(\mu) \U^I \\ 
=\int_{\bR^d} \sum_{I\ge 0}h(I)\binom{|I|}{I} \x^I\U^I  d\mu(\bx)=
\int\limits_{\bR^d} h\left(\U\frac{\partial}{\partial\U}\right)\circ \sum_{I\ge 0}\binom{|I|}{I} \x^I\U^I  d\mu(\x)\\
=\int\limits_{\bR^d} h\left(\U\frac{\partial}{\partial\U}\right)\circ 
\frac{ d\mu(\x)}{1-\lan\x,\U\ran}
=\int\limits_{\bR^d} \frac{d!\  d\mu(\x)}{(1-\lan\x,\U\ran)^{d+1}},
\end{multline*}
where in the final derivation we repeatedly made use of \eqref{eq:diff_h}, for $1\leq \ell\leq d.$ 
\end{proof}

\begin{rema+}\label{rem:Fanta}
Another point of view on \eqref{eq:Mainint} is that it is the result of the
application of the differential operator 
$g\left(\U\frac{\partial}{\partial\U}\right)$ to the integral 
transformation 
$\int_{\bR^d}\frac{d\mu(\x)}{1-\lan \x,\U\ran}$ of the measure $\mu$ (also known as the Fantappi\`{e}  transform of $\mu$); see e.g. \cite{MR2274973}.

In \cite{PS12} a similar idea was applied to the harmonic polygonal measures in the plane.
\end{rema+}

\begin{proof}[Proof of Theorem~\ref{th:weight}]
Assume first that $\rho(x_1,...,x_d)={\bf x}^{K}=x_1^{k_1}\cdots x_d^{k_d}$ is a monomial and consider
$$\rho\left (\frac{\partial}{\partial {\bu}} \right)\circ F_{\mu}({\bu})=\frac{\partial^{|{K}|}}{\partial u_1^{k_1}\cdots \partial u_d^{k_d}}\circ \sum_{I=(i_1,\dots,i_d)\geq 0}
\frac{(|I|+d)!}{i_1!\cdots i_d!} m_I(\mu) \bu^I.$$
One gets
\begin{multline*}
\frac{\partial^{|K|}}{\partial {\bu}^{K}}\circ F_{\mu}({\bu})=\sum_{I=(i_1,...,i_d)\geq 0} 
\left(\frac{(|I|+|{ K}|+d)!}{\prod\limits_{j=1}^d (i_j+k_j)!}\prod_{j=1}^d\frac{(i_j+k_j)!}{i_j!}\right)m_{I+K}(\mu)\bu^I\\
=\sum_{I=(i_1,\dots,i_d)\ge 0}\frac{(|I|+d+|{K}|)!}{\prod_{j=1}^d i_j!}m_{I}(\bx^K\mu)\bu^I.
\end{multline*}
Observe that the normalizing coefficients of $m_I(\bx^K\mu)$ in the latter 
expression depend only on $I$ and $|K|$ but not on particular entries of  $K$. 
Therefore for an arbitrary homogeneous $\rho$ of degree $\delta$,   one gets  by additivity
\[ \rho\left (\frac{\partial}{\partial {\bu}} \right)\circ F_{\mu}({\bu})=
\sum_{I=(i_1,\dots,i_d)\ge 0}\frac{(|I|+d+\delta)!}{\prod\limits_{j=1}^d i_j!}m_{I}(\rho\mu). \]
This shows \eqref{eq:gen_ii}.
Repeated application of \eqref{eq:diff_h}, for $d+1\leq \ell\leq d+\delta$, 
 to the integral representation \eqref{eq:Mainint}, respectively, to the representation 
\eqref{eq:nmomgf}, of
$F_{\rho\mu}(\bu)$ implies \eqref{eq:gen_iii}, respectively, \eqref{eq:gen_i}.
\end{proof}

\section{Inverse moment problem for strongly non-degenerate $S$}\label{s3}

\begin{proof} [Proof of Lemma~\ref{lm:inv}]

We prove first that the tangent cone at any non-vertex allows a decomposition into  convex polytopal cones each having a  
translation-invariant direction.

Let $\bv$ be a point in $\cP$ which is  not a vertex.
Then there is a dissection $\cT$ of $\cP$ such that $\bv$ is not a vertex of any simplex of $\cT$. 
Let $U$ be the set of simplices $S_u$ of $\cT$ with closures containing $\bv$.
Take the dissection of the tangent cone $T_v(\cP)$ into the tangent cones
to simplices from $U$,  $T_\bv(\cP)=\cup_{u\in U} T_\bv(S_u)$. 
Clearly, every subcone $T_v(S_u)$ contains a translation-invariant direction 
(any direction parallel to the minimal face containing $\bv$). %Claim is proved.

Vice versa, to prove the converse implication,  let us take a dissection of the tangent cone 
$T_\bv(\cP)$ into a disjoint union of  convex polytopal cones $Q_1,\dots,Q_k$. By definition of the tangent cone and 
since $\cP$ can be represented as a finite union of simplices, we obtain that any sufficiently small 
neighborhood of $\bv$ in the tangent cone $T_\bv(\cP)$ is a neighborhood  of $\bv$ in the entire $\cP$.
Consider the parallelepiped $\Boxe$ centered at $\bv$ that is the $\veps$-ball centered at $\bv$, in the $L_1$-norm. 
Note that each convex polytopal set $Q_i\cap \Boxe$ can be decomposed into a union of simplices that do not contain $\bv$ as a vertex.

Further notice that the set $\cP\setminus\Boxe$ can be represented as a finite disjoint union of simplices, 
since $\Boxe$ is the intersection of a finite number of half-spaces and $\cP$ is a disjoint union of simplices.
Clearly, every simplex in this union should not have $\bv$ as a vertex. 
Now combining the dissections of each $Q_i\cap \Boxe$ and $\cP\setminus\Boxe$ we obtain the required dissection of $\cP$.
\end{proof}

%If $\bv$ is not a vertex of a generalized polytope $\cP$ then the tangent cone $T_p(\cP)$ to $\cP$ at $\bv$ allows such a decomposition. Indeed, let $\cT$ be a dissection of $\cP$ such that $\bv$ is not a vertex of $\cT$. The closure of the union of the tangent cones  to all simplices of $\cT$ whose closure contains $\bv$ coincides with $T_p(\cP)$. But the tangent cone to every simplex whose closure contains $\bv$ clearly  has a translation-invariant direction (parallel to minimal face of the closure containing $\bv$).

%Vice versa, let we have such decomposition. According to the definition of tangent cone and since $\cP$ is a finite union of simplices a small neighborhood of tangent cone at $\bv$ lies entirely in $\cP$.

%If $Q$ is a subcone of this decomposition and $s$ is the translation-invariant direction of $\dim k<d$ ($s$ necessarily contains $\bv$) take a small simplex $\delta$ in $s$. Take a polyhedral link $L$ of $s$ in $Q$ of $\dim d-k-1$. 
%Each simplex in $L$ and $\delta$ generates a $d$-dimensional simplex in $Q$. The union of all these simplices form a triangulated full-dimensional neighborhood $N_Q $ of $\bv$ in $Q$ where $\bv$ is not a vertex. The union of all $N_Q$ makes a neighborhood $N$ of $\bv$ in $P$. To finish a proof it is enough to extend this dissection of $N$ to the dissection of the whole $\cP$ in arbitrary way.

\begin{proof}[Proof of  Proposition~\ref{prop:gener}] 
We begin by considering the case $\rho\equiv 1$.
Let $\cT$ be a dissection of $\cP$ with vertices $\Verti(\cT)$. 
Corollary~\ref{cor:simplex} implies
that $F_\cP(\bu)$ has a  denominator
dividing $g_{\cT}(\bu)=\prod_{\bv\in \Verti(\cT)}(1-\lan \bv,\bu\ran)$.
Take $\bv_1\in \Verti(\cT)\setminus\Verti(\cP)$. Then there exists another
dissection $\cT'$ such that
$\bv_1\not\in\Verti(\cT')$.
Expressing  $F_\cP(\bu)$  as ratios of polynomials, we have
$$F_\cP(\bu)=\frac{f_{\cT}(\bu)}{h_{\cT}(\bu)(1-\lan \bv_1,\bu\ran)}=\frac{f_{\cT'}(\bu)}{g_{\cT'}(\bu)},\quad\text{where }
g_{\cT}(\bu)={h_{\cT}(\bu)(1-\lan \bv_1,\bu\ran)}.$$
Here $g_{\cT'}$ is not divisible by $1-\lan \bv_1,\bu\ran$, by the choice
of $\cT'$. Thus
$f_{\cT}$ is divisible by $1-\lan \bv_1,\bu\ran$, and can be canceled out
in the expression for
$F_\cP(\bu)$.

The case of arbitrary homogeneous $\rho$ follows immediately by applying Theorem~\ref{th:weight} to the 
already covered case $\rho\equiv 1$. 
\end{proof}

\begin{proof}[Proof of Proposition~\ref{pr:generic}] First we show that for an arbitrary  finite spanning set $S\subset \bR^d$, the space $\M^\De(S)$ has a basis 
of $d$-dimensional simplices containing a fixed vertex $\bv\in S$.  In particular, the set of all $d$-dimensional simplices containing  $\bv$ spans $\M^\Delta(S)$ but is not necessarily a basis.  Consequently,  their Fantappi\`{e} transformations spans $\F^\Delta(S)$. %Indeed, fixing any vertex $\bv$ take any simplex $\Delta=span(\bv_{i_1},...,\bv_{i_{d+1}})$ with vertices in $S$. Without loss of generality we may assume  that $\bv=\bv_N$.  Assume that $\Delta$ does not contain $\bv$. 
 The following result is formulated as Theorem 4.2 of \cite{Al1} and in a different form  in \cite{AlGelZel}. (We omit the proof of this statement here.)
 
Given two points $p$ and $q$ and a  set $M$ in $\bR^d$, we say that $q$ is {\it visible} from $p$ with respect to $M$ if the line segment $pq$ is disjoint from $M$. 

\begin{lemma}\label{lm:simplex}
Given a $d$-dimensional simplex $\sigma\subset \bR^d$, denote by $\Verti(\sigma)$ the set of vertices of $\sigma$. 
Let $\sigma^0$ be the interior of $\sigma$. Let $p$ be any point in $\bR^d$ and let $Q^+$ (resp. $Q^-$) be the set  
of all $(d-1)$-dimensional faces of $\sigma$ which are  visible (resp. not visible) from $p$ with respect to $\sigma^0$.  
Then the standard measures of all $d$-dimensional simplices with vertices in 
$\Verti(\sigma)\cup\{ p\}$ satisfy
$$\mu_{\sigma}=\sum_{\sigma_i\in Q^+}\mu_{\sigma_i,p}-\sum_{\sigma_i\in Q^-}\mu_{\sigma_i,p},$$
where $\mu_{\sigma_i,p}$  is the standard measure of the $d$-dimensional simplex spanned by the vertices of $\sigma_i$ and the point $p$. 
\end{lemma}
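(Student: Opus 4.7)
The plan is to work on the Fantappi\`e transform side of the isomorphism~\eqref{eq:isom} and deduce the lemma from a single linear identity among rational functions. Set $w_0:=p$ and $w_j:=v_{j-1}$ for $1\le j\le d+1$, and let $\sigma_j$ denote the $d$-simplex on $\{w_0,\ldots,w_{d+1}\}\setminus\{w_j\}$, so that $\sigma_0=\sigma$ and $\sigma_j=\sigma_{j-1,p}$ for $j\ge 1$. Let $\epsilon_j\in\{\pm1\}$ be the orientation sign of the ordered vertex tuple of $\sigma_j$ and put $D_j:=\epsilon_j\cdot d!\cdot\Vol(\sigma_j)$ for the corresponding signed volume.

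My first step would be to establish the signed-measure identity
\begin{equation*}
\sum_{j=0}^{d+1}(-1)^j\epsilon_j\,\mu_{\sigma_j}=0.
\end{equation*}
By Corollary~\ref{cor:simplex} together with injectivity of the Fantappi\`e transform~\eqref{eq:isom}, this is equivalent, after multiplying by $\prod_{k=0}^{d+1}(1-\lan w_k,\bu\ran)$, to the polynomial identity
\begin{equation*}
\sum_{j=0}^{d+1}(-1)^j D_j\bigl(1-\lan w_j,\bu\ran\bigr)=0.
\end{equation*}
This splits into $\sum_j(-1)^j D_j=0$ and $\sum_j(-1)^j D_j\, w_j=0$, and both are instances of Cramer's rule applied to the $(d+1)\times(d+1)$ matrix whose columns are the homogeneous coordinates $(1,w_k)^\top$; together they express the unique (up to scale) affine dependence among $d+2$ points in general position in $\bR^d$.

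Next I would isolate the $j=0$ term in the identity, obtaining $\mu_\sigma=\sum_{i=0}^d\varepsilon_i\,\mu_{\sigma_{i,p}}$ with signs $\varepsilon_i\in\{\pm1\}$; degenerate terms, occurring when $p\in\mathrm{aff}(\tau_i)$ for $\tau_i:=\conv(\{v_0,\ldots,v_d\}\setminus\{v_i\})$ the face of $\sigma$ opposite $v_i$, carry $\mu_{\sigma_{i,p}}=0$ and can be dropped.

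The final and most delicate step is to identify $\varepsilon_i$ with the visibility status of $\tau_i$, and this I expect to be the main obstacle. The orientation sign of $\sigma_{i,p}$ relative to $\epsilon_0$ flips precisely when $p$ and $v_i$ lie on opposite sides of the supporting hyperplane $\mathrm{aff}(\tau_i)$. On the other hand, the paper's definition of visibility with respect to $\sigma^0$ is the condition that the open segment from $p$ to the relative interior of $\tau_i$ avoids $\sigma^0$, and a short case check shows this is equivalent to $p$ lying on the opposite side of $\mathrm{aff}(\tau_i)$ from $\sigma^0$, equivalently from $v_i$. Combining these two side-conditions with the global $(-1)^j$ factor from the alternating identity pins down $\varepsilon_i=+1$ exactly when $\tau_i\in Q^+$ and $\varepsilon_i=-1$ when $\tau_i\in Q^-$, yielding the claimed formula.
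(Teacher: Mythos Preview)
The paper does not prove this lemma; it explicitly writes ``(We omit the proof of this statement here.)'' and refers to \cite{Al1} and \cite{AlGelZel}. So there is no in-paper argument to compare against.

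Your overall strategy is correct and fits the paper's framework well: the Cramer identity $\sum_{j=0}^{d+1}(-1)^jD_j(1-\lan w_j,\bu\ran)=0$ is precisely the (unique up to scale) affine dependence among the $d+2$ homogeneous vectors $(1,w_k)^\top$, and Corollary~\ref{cor:simplex} together with the injectivity in~\eqref{eq:isom} transfers it to the asserted relation among the standard measures $\mu_{\sigma_j}$. Degenerate simplices drop out on both sides, as you note.

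The genuine gap is in your last paragraph: the sign bookkeeping is only sketched, and carrying it through yields the \emph{opposite} of the printed formula. Concretely, write $p=\sum_{j\ge 1}\alpha_j w_j$ in barycentric coordinates with respect to $\sigma$; a direct column-replacement computation gives $D_j=(-1)^{j-1}\alpha_j D_0$ for $j\ge 1$, and then the coefficient of $\mu_{\sigma_{i,p}}=\mu_{\sigma_{i+1}}$ in your expansion of $\mu_\sigma$ is $(-1)^{i+2}\epsilon_{i+1}/\epsilon_0=\mathrm{sign}(\alpha_{i+1})$. This equals $+1$ exactly when $p$ lies on the \emph{same} side of $\mathrm{aff}(\tau_i)$ as $v_i$, i.e.\ precisely when $\tau_i$ is \emph{not} visible under the paper's definition. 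A one-line sanity check confirms this: for $p\in\sigma^0$ no facet is visible and the simplices $\conv(\tau_i\cup\{p\})$ partition $\sigma$, so one must have $\mu_\sigma=+\sum_i\mu_{\sigma_i,p}$, whereas the formula as printed would give $\mu_\sigma=-\mu_\sigma$. Thus the lemma itself carries a sign slip (harmless for the paper, which only uses that the $\mu_{\sigma_i,p}$ span), and your ``short case check'' should have produced the swapped version rather than the stated one.
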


\begin{rema+}
If $\sigma_{i,p}$ is a degenerate simplex, i.e., $p$ lies in the hyperplane spanned by  $\sigma_i$, we simply exclude the corresponding 
term $\mu_{\sigma_{i,p}}$ from the above formula.
\end{rema+}

To prove Proposition~\ref{pr:generic}, we need to show that $\F^\De(S)$ coincides with $\mathfrak{Rat}(S)$ if and only if $S$ is strongly non-degenerate. Indeed,  $\F^\De(S)\subseteq \mathfrak{Rat}(S)$ for an arbitrary spanning $S$, by Proposition~\ref{prop:gener}. The Fantappi\`{e}  transform $F_\mu: \M^\De(S)\to \F^\De(S)$ is a linear isomorphism which implies that $\dim \M^\De(S)=\dim \F^\De(S)$.  By Lemma~\ref{lm:simplex}, the space $\M^\De(S)$ is spanned by the standard measures $\mu_\De$ of the set $\B_{i}$ of all $d$-dimensional simplices containing the fixed vertex $\bv_{i}$. Let us fix the vertex $\bv_{_N}$ and consider the set $\B_{_N}$. For $S$ strongly non-degenerate, the cardinality of $\B_{_N}$ equals $\binom {N-1}{d}$. 

Now we show that $\F^\De(S)=\mathfrak{Rat}(S)$, where $\mathfrak{Rat}(S)$ has the dimension 
$\binom {N-1}{d}$, as it is isomorphic to the space $Pol(N-d-1, d)$ of all  $d$-variate polynomials of degree at most $N-d-1$.
% or equivalently to the space of homogeneous polynomials of degree $N-d-1$ in $d+1$ variables. 
This would immediately imply that the standard measures of simplices in $\B_{_N}$ are linearly independent. 

\begin{lemma}
\label{lm:product}
If $S$ is strongly non-degenerate, then $\F^\De(S)=\mathfrak{Rat}(S)$.
\end{lemma}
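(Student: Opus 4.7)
The plan is to establish $\F^\De(S) = \Rat(S)$ by matching dimensions against the containment $\F^\De(S) \subseteq \Rat(S)$ already proved in Proposition~\ref{prop:gener}. By Lemma~\ref{lm:simplex}, $\F^\De(S)$ is spanned by the $\binom{N-1}{d}$ Fantappi\`e transforms of simplices in $\B_N$, and strong non-degeneracy guarantees that every $d$-subset of $\{\bv_1,\dots,\bv_{N-1}\}$ together with $\bv_N$ genuinely spans a $d$-simplex, so this count is exact. Putting each such Fantappi\`e transform over the common denominator $\Phi_S(\bu)$ via the isomorphism $Pol(N-d-1,d) \to \Rat(S)$, $P \mapsto P/\Phi_S$, the lemma reduces to showing linear independence in $Pol(N-d-1,d)$ of the $\binom{N-1}{d}$ polynomials $P_T := \prod_{j \in T} l_j$ indexed by the subsets $T \subseteq \{1,\dots,N-1\}$ with $|T| = N-d-1$.

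To do this I would homogenize: set $\hat l_j(u_0,\bu) := u_0 - \lan \bv_j,\bu\ran$ and $\hat P_T := \prod_{j \in T} \hat l_j$ inside the degree $(N-d-1)$ part of $\bR[u_0,u_1,\dots,u_d]$. Strong non-degeneracy of $S$ is equivalent to the assertion that any $d+1$ of $\hat l_1,\dots,\hat l_{N-1}$ are linearly independent in the $(d+1)$-dimensional space $(\bR^{d+1})^*$, and any $d$ of them are then automatically independent as well, since a nontrivial relation on $d$ forms extends by appending a zero coefficient to a nontrivial relation on $d+1$. Consequently, for every $d$-subset $K \subseteq \{1,\dots,N-1\}$ the common zero locus of $\{\hat l_k\}_{k \in K}$ in $\bR^{d+1}$ is one-dimensional, and I pick any nonzero $p_K$ in it.

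The heart of the argument is that $\hat P_T(p_K) \neq 0$ if and only if $T$ equals $K^c := \{1,\dots,N-1\}\setminus K$. Indeed, if $T\cap K \neq \emptyset$ then $\hat P_T$ contains a factor $\hat l_k$ with $k \in K$ which vanishes at $p_K$; if instead $T \cap K = \emptyset$ (forcing $T = K^c$), then for each $j \in T$ the $(d+1)$-set $K \cup \{j\}$ is linearly independent, so its joint zero locus is $\{0\}$, and $p_K \neq 0$ forces $\hat l_j(p_K) \neq 0$, whence $\hat P_{K^c}(p_K) \neq 0$. Thus the matrix $\bigl(\hat P_T(p_K)\bigr)$ becomes, after reindexing columns by $K \mapsto K^c$, diagonal with nonzero diagonal entries; evaluating any relation $\sum_T c_T \hat P_T = 0$ at each $p_K$ then forces $c_{K^c} = 0$ for every $K$, yielding linear independence of the $\hat P_T$ and hence of the $P_T$.

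The main subtlety is precisely this passage to the homogeneous setting. In the affine picture one would try to choose a test point $\bu_K \in \bR^d$ with $l_k(\bu_K) = 0$ for all $k \in K$, but such a point need not exist: it requires $\{\bv_k\}_{k \in K}$ to be linearly (not merely affinely) independent in $\bR^d$, which is genuinely not forced by strong non-degeneracy, as the example $\bv_1 = (1,0)$, $\bv_2 = (2,0)$ already shows. Homogenization converts ``linear'' into ``projectively linear'' and uniformly removes this obstruction, which is why I expect this reformulation to be the main (and essentially only) non-routine step.
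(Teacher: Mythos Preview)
Your proof is correct, but it takes a genuinely different route from the paper. Both arguments homogenize to $\hat l_j(u_0,\bu)=u_0-\lan\bv_j,\bu\ran$ and both reduce the claim to the assertion that the products $\prod_{j\in T}\hat l_j$, for $T$ running over the $(N-d-1)$-subsets of $\{1,\dots,N-1\}$, span (equivalently, form a basis of) the degree-$(N-d-1)$ homogeneous polynomials. The paper establishes \emph{spanning} directly by an inductive peeling argument: since any $d+1$ of the $\hat l_j$ are independent, each variable $u_k$ is a combination of them, so one may successively replace one factor $\hat l_j$ at a time by a variable until every monomial is reached. You instead establish \emph{linear independence} via evaluation at the dual points $p_K$ killing exactly the $d$ forms indexed by $K$, obtaining a diagonal evaluation matrix. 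Your approach is arguably crisper and anticipates the paper's later Lemma~\ref{lm:mat}, where essentially the same evaluation (substituting $\bL_{\bi[d]}(\bu)$ into the products) is used to write down $Mat_S^{-1}$ explicitly; the paper's approach, on the other hand, generalizes more readily to the weakly non-degenerate case treated in Lemma~\ref{lem:generic}, where test points $p_K$ need no longer separate the products.
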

\begin{proof}
We recall that $\F^\De(S)$ comprises all linear combinations of the 
rational functions $$\frac{d!\Vol(\bv_{i_1},\dots,\bv_{i_d},\bv_{_N})}{(1-\lan\bv_{i_1},\bu\ran)\cdot\dots\cdot(1-\lan\bv_{i_{d}},\bu\ran)\cdot(1-\lan\bv_{_N},\bu\ran)}.$$ 
For each term $1-\lan\bv_{i},\bu\ran$, we consider a (homogeneous) linear form $l_i(u_0,\bu)=u_0-\lan\bv_{i},\bu\ran$ in $d+1$ variables $u_0,\dots,u_d$.
Set $n=N-1$ where $n\ge d+1$. For the $n$-tuple $\LL=\{l_1,l_2,....,l_{n}\}$ of linear $(d+1)$-variate forms, 
let $V_\LL$ be the linear span of all possible products of the form $l_{i_1} l_{i_2}  \dots  l_{i_{n-d}},\; 1\le i_1<i_2<...<i_{n-d}\le n$.
Observe that $V_\LL$ is the space of all numerators that one can obtain in $\F^\De(S)$. We need to show that $V_\LL$ contains $HPoly(n-d,d+1)$, 
the space of all  $(d+1)$-variate homogeneous polynomials of degree $n-d$.
Recall that  any $d+1$-tuple of linear forms $l_{i_1},\dots,l_{i_{d+1}}$ is linearly independent  due to the strong degeneracy assumption. Thus 
we can express each single variable $u_0,\dots, u_d$ as a linear combination of these forms. Since $V_\LL$
contains all  products $l_{i_1} l_{i_2}  \dots  l_{i_{n-d-1}} l_j$, where $j\in \{1,\dots,n\}\setminus\{i_1,\dots,i_{n-d-1}\}$,
we conclude that $V_\LL$ contains all homogeneous polynomials of the form 
\[
l_{i_1} l_{i_2}  \dots  l_{i_{n-d-1}} u_k,
\quad\text{ for  $0\le k\le d$.}
\]
From that we deduce that $V_\LL$ contains all 
homogeneous polynomials of the form $l_{i_1} l_{i_2}  \dots  l_{i_{n-d-2}} u_k u_j$, 
where $j,k \in [n]\setminus\{i_1,\dots,i_{n-d-1}\}$. Continuing along the same lines, we derive by 
induction that $V_\LL$  contains $HPoly(n-d,d+1)$.
\end{proof}

For an arbitrary spanning $S$, the  cardinality of $\B_{_N}$ is at most $\binom {N-1}{d}=\dim \mathfrak{Rat}(S)$.  Furthermore,  if $S$ is not strongly 
non-degenerate the cardinality of $\B_{_N}$ is strictly smaller than $\binom {N-1}{d}$, as there will be linear dependencies
among the standard measures on the simplices in $B_{_N}$. Therefore, $\dim \F^\De(S)<\dim \mathfrak{Rat}(S)$. 
\end{proof}

We define the square matrix $Mat_{\LL}$ of size $\binom{n}{d}$ with entries being coefficients of the above products of linear forms w.r.t. the standard monomial basis in $HPol(n-d,d+1)$.  %We need the next intriguing  statement.

\begin{lemma}\label{rm:product}
 The determinant of $Mat_{\LL}$ is proportional to the product of the determinants of all $(d+1)$-tuples $(l_{i_1},l_{i_2},\ldots, l_{i_{d+1}}),\; i_1<i_2<\ldots <i_{d+1}$. (By the determinant of a $(d+1)$-tuple of vectors in $\bR^{d+1}$ with a fixed basis we mean the  determinant of the matrix formed by the coordinates of these vectors in a chosen basis.) 
\end{lemma}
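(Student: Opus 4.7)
The plan is to view both sides as polynomials in the $n(d+1)$ variables $a_{k,j}$ giving the coefficients of the forms $l_k(u_0,\bu)=\sum_{j=0}^d a_{k,j}u_j$, and to show that they agree up to a nonzero scalar by a divisibility argument together with a degree count.

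First I would note that $\det Mat_{\LL}$ is a polynomial in the $a_{k,j}$, and that for each $(d+1)$-subset $J=\{j_1,\dots,j_{d+1}\}\subset[n]$ the factor $\De_J:=\det(l_{j_1},\dots,l_{j_{d+1}})$ is an irreducible polynomial in the coefficient variables $\{a_{j_k,m}\}$ (this is the classical irreducibility of the generic $(d+1)\times(d+1)$ determinant). Next I would argue that $\De_J\mid \det Mat_{\LL}$. The point is that if $\De_J=0$, i.e.\ $\sum_{k}c_k l_{j_k}=0$ nontrivially, then setting $I=[n]\setminus J$ (of cardinality $n-d-1$) and multiplying through by $\prod_{i\in I}l_i$ gives
\[
\sum_{k=1}^{d+1} c_k \prod_{i\in I\cup\{j_k\}}l_i \;=\;0,
\]
which is a nontrivial linear relation among the $d+1$ columns of $Mat_{\LL}$ indexed by the $(n-d)$-subsets $I\cup\{j_k\}$. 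Hence $\det Mat_{\LL}$ vanishes on the irreducible hypersurface $\{\De_J=0\}$, and by unique factorization in the polynomial ring $\De_J\mid \det Mat_{\LL}$.

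Since distinct $J,J'$ have some $j\in J\setminus J'$, the irreducibles $\De_J$ and $\De_{J'}$ involve different coefficient variables and are therefore non-associate, hence pairwise coprime. I would conclude that
\[
\prod_{1\le j_1<\dots<j_{d+1}\le n}\De_{\{j_1,\dots,j_{d+1}\}} \;\;\Big|\;\; \det Mat_{\LL}.
\]
The final step is a degree count showing that the two polynomials have the same multidegree. For each fixed $k$, the variables $a_{k,0},\dots,a_{k,d}$ occur in exactly those columns of $Mat_{\LL}$ indexed by $(n-d)$-subsets containing $k$, with total degree $1$ in each such column; thus $\det Mat_{\LL}$ has degree $\binom{n-1}{d}$ in $\{a_{k,*}\}$. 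On the other hand, $l_k$ occurs in precisely $\binom{n-1}{d}$ of the $(d+1)$-tuples $J$, contributing degree $1$ to each $\De_J$, so the product has the same multidegree $\binom{n-1}{d}$ in $\{a_{k,*}\}$. The ratio is therefore a constant, and it is nonzero because by Lemma~\ref{lm:product} the matrix $Mat_{\LL}$ is invertible whenever $S$ is strongly non-degenerate (so both sides are simultaneously nonzero for some specialization).

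The step I expect to require the most care is the multidegree bookkeeping, and the clean verification that the generic determinant $\De_J$ is irreducible and that the products $\prod_{i\in I\cup\{j_k\}}l_i$ are genuinely columns of $Mat_{\LL}$ (i.e.\ that $I\cup\{j_k\}$ is indeed an $(n-d)$-subset), which is where the identity $|I|+1=n-d$ is used.
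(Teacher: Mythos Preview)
Your argument is correct and follows essentially the same route as the paper: both proofs show that each $(d+1)\times(d+1)$ minor $\De_J$ divides $\det Mat_{\LL}$ by exhibiting a linear dependence among the columns indexed by $(n-d)$-subsets $I\cup\{j_k\}$ whenever $l_{j_1},\dots,l_{j_{d+1}}$ are linearly dependent, and then finish with a degree count. Your version is slightly more careful (you track multidegrees rather than total degree, explicitly invoke irreducibility of the generic determinant, and justify non-vanishing of the constant via Lemma~\ref{lm:product}), but the strategy is the same.
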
 

\begin{proof}  Indeed, $\det(Mat_{\LL})$ is a form of degree $(d+1)\binom{n}{d}$ in the coefficients of the linear forms $l_1,\ldots, l_n$. Thus the product  $\prod_{i_1,\ldots, {i_{d+1}}}\det(l_{i_1},l_{i_2},\ldots, l_{i_{d+1}})$ has the same degree as  $\det(Mat_{\LL})$. Therefore it suffices to show that $\det(Mat_{\LL})$ vanishes as soon as some of $\det(l_{i_1},l_{i_2},\ldots, l_{i_{d+1}})$ vanishes. (Observe that all polynomials $\det(Mat_{\LL})$ are coprime.) 
Without loss of generality,  assume that $l_1$ is a linear combination of $l_2,\ldots,l_{d+1}$. But then the column of $Mat_{\LL}$ corresponding to the $(n-d)$-tuple $(1,d+2,d+3,\ldots, n)$ will be a linear combination of those corresponding to $(2,d+2,d+3,\ldots, n),$ \ldots $(d+1,d+2,d+3,\ldots, n)$. 
\end{proof}

\begin{proof}[Proof of Corollary~\ref{cor:generic}] 
As we mentioned above, $\M(S)$ is isomorphic to $\F(S)$ and, analogously, 
$\M^\Delta(S)$ is isomorphic to $\F^\Delta(S)$. Thus, if we prove the equality $ \F(S)=\F^\Delta(S)$, then we get $\M(S)=\M^\Delta(S)$. 
By Lemma~\ref{lm:product}, the space $\F^\Delta(S)$ coincides with the linear space of all rational functions with
the numerator an arbitrary polynomial of degree at most $N-d-1$ and the denominator $\Phi_S(\bu)$ equal to the product of all linear forms dual to all vertices in $S$.  
By Proposition~\ref{prop:gener} an arbitrary function in  $\F(S)$ is a rational function with denominator of desired form  and numerator of degree at most $N-d-1$, for obvious reasons---take an arbitrary dissection and sum over its simplices. Since all such functions are already in $\F^\Delta(S)$ we are done.  \end{proof} 
 
\begin{proof}[Proof of Theorem~\ref{th:inverse}]  Given a strongly non-degenerate set $S=\{\bv_1,\ldots , \bv_{N-1}, \bv_{N}\}$ and the Fantappi\`{e} transform $R(\bu)=P(\bu)/\Phi_S(\bu)$, 
where $\Phi_S(\bu)=\prod_{j=1}^Nl_{j}(\bu)$,  we want to solve the inverse moment problem. (It is easy to obtain $P(\bu)$ from information on the moments of order at most $N-d-1$ using Lemma~\ref{lm:McL}.) %Recall that  $N=n+1$ in notation of Introduction.)  

To solve the latter inverse problem  using Corollary~\ref{cor:simplex}, we need to find an appropriate set of weights $\bw=\{w_{i_1,\ldots, i_d}\},\; i_1<i_1<\ldots <i_d$, where $w_{i_1,\ldots, i_d}$ is the weight (recall
Definition~\ref{eq:weight}) of the $d$-dimensional simplex $\conv(\bv_{i_1},\bv_{i_2},\ldots , \bv_{i_d}, \bv_{N})$ so that 
$$\sum_{i_1<i_2<\ldots <i_d}\frac{w_{i_1,\ldots, i_d}}{l_{i_1}(\bu) l_{i_2}(\bu) \dots  l_{i_d}(\bu) l_{_N}(\bu)}=\frac{P(\bu)}{l_{1}(\bu) l_{2}(\bu) \dots  l_{_N}(\bu)}.$$
%(Recall that $w_\De=d!\int_{\De}d\mu$.) 
Clearing the denominators, we get the equation 
$$\sum_{i_1<i_2<\ldots <i_d}{w_{i_1,\ldots, i_d}}{l_{j_1}(\bu) l_{j_2}(\bu) \dots  l_{j_{N-d-1}}(\bu) l_{N}(\bu)}={P(\bu)},$$
where  $\{j_1,\ldots j_{N-d-1}\}=\{1,2,\ldots, N-1\}\setminus \{i_1,\ldots , i_d\}$. The latter equation is obviously equivalent to the system of linear equations
$$Mat_{S}\cdot \bw=(p_{I_1},\dots,p_{I_t}), \quad\text{where\ } P(\bu)=\sum_I p_I \bu^I,$$
and $\bw=\{w_{i_1,\ldots, i_d}\}$ is the vector  consisting of the weights of all simplices containing $\bv_{N}$. 
\end{proof}

Theorem~\ref{th:inverse} solves the inverse moment problem for strongly non-degenerate spanning set $S$. We can make this solution more explicit   by giving a closed formula for  the inverse matrix $Mat_{S}^{-1}$.  To do this, we 
introduce an extra variable $u_0\in\bR$ and identify the space $Pol(N-d-1,d)$ with 
the space $HPol(N-d-1, d+1)$ of homogeneous forms of degree  $N-d-1$ in $d+1$ variables $(u_0,u_1,\dots, u_d)$. 
We homogenize each linear form $l_i(\bu)$ in $\LL$ as $l_i(\bu,u_0)=u_0-\lan\bv_i,\bu\ran$. (The matrix $Mat_{S}$ remains unchanged.) 

We also need the following $(d+1)\times (N-1)$ matrix $\bL$ 
\begin{equation}
\label{eq:Lmatrix}
\bL = \bordermatrix{~ & l_1 & l_2 & \dots & l_{N-1} \cr
                  u_0 & 1 & 1 & \dots & 1 \cr
                  u_1 & -\lan\bv_1, e_1 \ran & -\lan\bv_2, e_1 \ran & \dots & -\lan\bv_{N-1}, e_1 \ran \cr
                  ~\vdots & \vdots & \vdots & & \vdots \cr
                  u_d & -\lan\bv_1, e_d \ran & -\lan\bv_2, e_d \ran & \dots & -\lan\bv_{N-1}, e_d \ran \cr
                  }
\end{equation}
associated with $Mat_{S}$. 
For all possible subsets  $\bi[d]=\{i_1,\dots, i_d \},$  $i_1<i_2<\dots<i_d$ of $d$ distinct columns of $\bL$,  consider 
the linear in $\bu$ function $\bL_{\bi[d]}(\bu)$  given by:
\begin{equation}
\label{eq:minor}
\bL_{\bi[d]}(\bu)=\text{det}
      \begin{bmatrix}
      u_0    & 1                       & \dots  &  1                      \\
      u_1    & -\lan\bv_{i_1}, e_1\ran & \dots  & -\lan\bv_{i_d}, e_1\ran \\
      \vdots &                         & \dots  &                         \\
      u_d    & -\lan\bv_{i_1}, e_d\ran & \dots  & -\lan\bv_{i_d}, e_d\ran
      \end{bmatrix}.
\end{equation}
Denote by  $\bL_{\bi[d],j}$ the coefficient of $u_j$ in the linear form 
$\bL_{\bi[d]}(\bu)$. For each $1\le j\le N-1$,  define $\bL(j,\bi[d])$ as
\begin{equation}
\label{eq:fullminor}
\bL(j,\bi[d])=\text{det}
      \begin{bmatrix}
      1                      & 1                       & \dots  &  1                      \\
      -\lan\bv_{j}, e_1\ran  & -\lan\bv_{i_1}, e_1\ran & \dots  & -\lan\bv_{i_d}, e_1\ran \\
      \vdots                 &                         & \dots  &                         \\
      -\lan\bv_{j}, e_d\ran  & -\lan\bv_{i_1}, e_d\ran & \dots  & -\lan\bv_{i_d}, e_d\ran
      \end{bmatrix}.
\end{equation}
Note that if $j\notin \bi[d]$ then $\bL(j,\bi[d])\neq 0$, as by the assumption  of  strong non-degeneracy of $S$  the corresponding $d+1$ linear forms are linearly independent. On the other hand, if $j\in\bi[d]$, then we have $\bL(j,\bi[d])= 0.$

The matrix $Mat_{S}^{-1}$ has the following explicit description. 

\begin{lemma}\label{lm:mat} For each $(N-d-1)$-tuple of  forms $\{l_{j_1},l_{j_2},\dots,l_{j_{N-d-1}}\}$, set 
$\bi[d]=\{i_1,\dots,i_d\}=[N-1]\setminus\{j_1,\dots,j_{N-d-1}\},$ where $i_1<i_2\dots<i_d.$ Then, 
\begin{equation}
\label{eq:inverse}
Mat_{S}^{-1}= \bordermatrix{
                  ~                        &    \dots &  u_0^{n_0}u_1^{n_1}\dots u_d^{n_d}                 & \dots        \cr
                  \quad\vdots              &          &  \quad\vdots                                       &  \cr
                  l_{j_1}\dots l_{j_{N-d-1}} &    \dots &  \frac{\prod\limits_{j=0}^{d}\bL_{\bi[d],j}^{n_j}}
                                                         {\prod\limits_{k=1}^{N-d-1} \bL(j_k,\bi[d])}       & \dots \cr
                  \quad\vdots              &          &  \quad\vdots                                       &  \cr
                }.
\end{equation}
\end{lemma}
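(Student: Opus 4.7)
The plan is to extract each column of $Mat_{S}^{-1}$ by evaluating the identity
\[
P(\bu) = \sum_J c_J \prod_{k\in J} l_k(\bu),\qquad J\subset[N-1],\ |J|=N-d-1,
\]
at a carefully chosen vector that kills all but one term on the right-hand side. Fix $\bi[d]=\{i_1,\dots,i_d\}\subset[N-1]$ and let $J=[N-1]\setminus\bi[d]=\{j_1,\dots,j_{N-d-1}\}$. Define the vector
\[
\bp_{\bi[d]}=\bigl(\bL_{\bi[d],0},\bL_{\bi[d],1},\dots,\bL_{\bi[d],d}\bigr)\in\bR^{d+1}.
\]
By cofactor expansion of \eqref{eq:minor} along its first column, $\bL_{\bi[d]}(\bu)=\sum_j \bL_{\bi[d],j}\,u_j$; in particular, substituting the coefficient vector of $l_{i_r}$ for $(u_0,\dots,u_d)$ produces a determinant with two identical columns, so $l_{i_r}(\bp_{\bi[d]})=0$ for every $i_r\in\bi[d]$. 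Strong non-degeneracy of $S$ ensures that the $d$ columns indexed by $\bi[d]$ in \eqref{eq:minor} are linearly independent, so $\bp_{\bi[d]}\neq 0$.

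Next I compute $l_j(\bp_{\bi[d]})$ for arbitrary $j\in[N-1]$. The coefficient vector of $l_j$ in variables $(u_0,\bu)$ is exactly the first column of the matrix in \eqref{eq:fullminor}, so expanding that determinant along its first column yields
\[
\bL(j,\bi[d])=\sum_{k=0}^d (\text{coeff of }u_k\text{ in }l_j)\cdot \bL_{\bi[d],k}=l_j(\bp_{\bi[d]}).
\]
In particular $\bL(j,\bi[d])=0$ whenever $j\in\bi[d]$, and $\bL(j,\bi[d])\neq 0$ for $j\notin\bi[d]$ by strong non-degeneracy (the $d+1$ forms $l_j,l_{i_1},\dots,l_{i_d}$ are then linearly independent).

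Now substitute $\bu=\bp_{\bi[d]}$ into the expansion of $P$. For any $J'\neq J$ of size $N-d-1$ we have $J'\cap\bi[d]\neq\emptyset$, so $\prod_{k\in J'}l_k(\bp_{\bi[d]})=0$, while $\prod_{k\in J}l_k(\bp_{\bi[d]})=\prod_{k=1}^{N-d-1}\bL(j_k,\bi[d])\neq 0$. Only the term indexed by $J$ survives and one reads off
\[
c_J=\frac{P(\bp_{\bi[d]})}{\prod_{k=1}^{N-d-1}\bL(j_k,\bi[d])}.
\]
The entries of $Mat_{S}^{-1}$ in the row indexed by $l_{j_1}\cdots l_{j_{N-d-1}}$ are obtained by letting $P$ run over the monomial basis of $HPol(N-d-1,d+1)$. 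For $P(\bu)=u_0^{n_0}u_1^{n_1}\cdots u_d^{n_d}$ one has $P(\bp_{\bi[d]})=\prod_{j=0}^d\bL_{\bi[d],j}^{n_j}$, which is exactly the numerator in \eqref{eq:inverse}.

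The main obstacle is purely conceptual: recognizing $\bp_{\bi[d]}$ as the natural ``dual'' point at which the forms indexed by $\bi[d]$ all vanish, after which the argument is a one-point evaluation. The needed non-vanishing statements (that $\bp_{\bi[d]}\neq 0$ and $\bL(j_k,\bi[d])\neq 0$ for $j_k\in J$) are immediate consequences of the hypothesis of strong non-degeneracy, and the invertibility of $Mat_{S}$ has already been established in the proof of Theorem~\ref{th:inverse} via Lemma~\ref{lm:product}, so no separate consistency check is required.
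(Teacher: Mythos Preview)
Your argument is correct and is essentially the same as the paper's: both rest on the identity $l_j(\bp_{\bi[d]})=\bL(j,\bi[d])$, so that the product $l_{j'_1}\cdots l_{j'_{N-d-1}}$ evaluated at $\bp_{\bi[d]}$ vanishes unless $\{j'_1,\dots,j'_{N-d-1}\}=J$. The paper packages this as a direct check that $Mat_S^{-1}\cdot Mat_S=I$ (computing $\bfe^T(Mat_S\bfe')$ row-by-column), whereas you phrase it as a one-point evaluation extracting each coefficient $c_J$; the underlying computation is identical.
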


\begin{proof}[Proof of Lemma~\ref{lm:mat}]

In order to show that $Mat_{S}^{-1}$ defined by \eqref{eq:inverse} is indeed the inverse of $Mat_{S}$ we  need to verify 
that $Mat_{S}^{-1}\cdot Mat_{S}$ is the identity  operator on $V_\LL$. 

Let $\bfe'$ be the standard basis vector of $V_\LL$ 
corresponding to the product of linear forms $l_{j'_1}\dots l_{j'_{N-d-1}}$. 
Then $Mat_{S}\cdot\bfe'$ is the vector consisting  of the monomial coefficients of the homogeneous form 
$l_{j'_1}\dots l_{j'_{N-d-1}}$ in the variables $u_0,\dots,u_d$.
Let $\bfe^{T}$ be the row vector of $Mat_{S}^{-1}$ corresponding to the product 
$l_{j_1}\dots l_{j_{N-d-1}}$. We note that in $\bfe^{T}\cdot \left(Mat_{S}\cdot\bfe'\right)$ one can
factor out the common denominator $\prod\limits_{k=1}^{N-d-1} \bL(j_k,\bi[d])$ 
of all fractions in $\bfe^T$; the remaining factor  is of the form 
\begin{equation}
\label{eq:Scalar_Product}
\sum\limits_{\substack{I=(n_0,\dots,n_d) \\ |I|= N-d-1}} Mat_{S}[\bu^{I},\bfe']\prod\limits_{j=0}^{d}\bL_{\bi[d],j}^{n_j}
=l_{j'_1}(\bL_{\bi[d]}(\bu))\dots l_{j'_{N-d-1}}(\bL_{\bi[d]}(\bu)).
\end{equation}
Note that $l_{j'_k}(\bL_{\bi[d]}(\bu))=\bL(j'_k,\bi[d])$ for $1\leq k\leq N-d-1$, 
i.e., this holds for all terms in the product on the right-hand side                                      
of \eqref{eq:Scalar_Product}.  
Hence, if $\bfe\neq\bfe'$ then 
$\bfe^{T}\cdot \left(Mat_{S}\cdot\bfe'\right)=0$, as among $j'_1,\dots ,j'_{N-d-1}$ one can find $j'\in\bi[d]$ 
with $\bL(j',\bi[d])=0$. On the other hand, if $\bfe$ and $\bfe'$ coincide, then the right-hand side of
\eqref{eq:Scalar_Product} is
equal to $\prod\limits_{k=1}^{N-d-1} \bL(j_k,\bi[d])$. Dividing by the common denominator of the fractions in 
$\bfe$, we obtain $\bfe^{T}\cdot \left(Mat_{S}\cdot\bfe\right)=1.$
\end{proof}

\section{Inverse moment problem for  weakly non-degenerate $S$}\label{s4}
Given an arbitrary spanning set $S=\{\bv_1,\bv_2,\ldots, \bv_N\}$, consider the linear space $\Theta(S)\subseteq Pol(N-d-1,d)$  spanned by all products $l_{j_1}l_{j_2}\ldots l_{j_{N-d-1}},\; j_1<j_2<\ldots <j_{N-d-1}$. %where $Pol(N-d-1,d)$ is the space of all non-homogeneous polynomials of degree at most $N-d-1$.  
The next statement explains why we can extend our solution of the inverse moment problem from the case of strongly  non-degenerate $S$ to the case of weakly non-degenerate $S$. 
\begin{lemma} 
\label{lem:generic} $\Theta(S)=Pol(N-d-1, d)$ if and only if 
$S$ is  weakly non-degenerate, i.e.,  each $(d+2)$-tuple of  points of $S$ is spanning. 
\end{lemma}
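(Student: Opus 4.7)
The plan is to work in the homogenized setting. Writing $\tilde l_i(u_0,\bu)=u_0-\lan \bv_i,\bu\ran$ and identifying $Pol(N-d-1,d)$ with the space $HPol(N-d-1,d+1)$ of degree-$(N-d-1)$ forms in $u_0,\ldots,u_d$, the $\tilde l_i$ become vectors in the $(d+1)$-dimensional space $L$ of linear forms in $u_0,\ldots,u_d$, and any subset $\{\bv_{i_1},\ldots,\bv_{i_k}\}$ affinely spans $\bR^d$ if and only if $\{\tilde l_{i_1},\ldots,\tilde l_{i_k}\}$ spans $L$. Thus weak non-degeneracy of $S$ is equivalent to the statement that every $(d+2)$-subset of $\{\tilde l_1,\ldots,\tilde l_N\}$ spans $L$.

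For the ``only if'' direction I would argue via a common zero. If (after relabeling) $\{\bv_1,\ldots,\bv_{d+2}\}$ fails to be affinely spanning, then $\tilde l_1,\ldots,\tilde l_{d+2}$ lie in a proper subspace of $L$ and share a common zero $p\in\bC^{d+1}\setminus\{0\}$. Any product $\tilde l_{j_1}\cdots\tilde l_{j_{N-d-1}}$ uses $N-d-1$ distinct indices and therefore excludes only $d+1$ of the $N$ indices; since $d+2>d+1$, at least one factor must come from the bad set $\{\tilde l_1,\ldots,\tilde l_{d+2}\}$, and the product vanishes at $p$. Since evaluation at $p$ is a non-zero linear functional on $HPol(N-d-1,d+1)$ that annihilates the homogenization of $\Theta(S)$, strict inclusion follows.

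For the ``if'' direction I would adapt the induction from Lemma~\ref{lm:product}, but starting from $d+2$ rather than $d+1$ available forms. For $0\leq j\leq N-d-1$ let $W_j\subseteq HPol(N-d-1,d+1)$ be spanned by products of the form $\prod_{i\in I}\tilde l_i\cdot g$ with $I\subseteq\{1,\ldots,N\}$, $|I|=N-d-1-j$, and $g\in HPol(j,d+1)$, so that $W_0$ is precisely the homogenization of $\Theta(S)$ while $W_{N-d-1}=HPol(N-d-1,d+1)$. I would then show $W_{j+1}\subseteq W_j$ for each $j$: writing a typical generator of $W_{j+1}$ as $\prod_{i\in I'}\tilde l_i\cdot u_k\cdot u^\beta$ with $|I'|=N-d-2-j$ and $|\beta|=j$, the complement $\{1,\ldots,N\}\setminus I'$ has cardinality $d+2+j\geq d+2$, so weak non-degeneracy lets me select $d+2$ indices there whose forms span $L$, express $u_k$ as a linear combination of them, and absorb one $\tilde l$-factor into the product to land in $W_j$. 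Chaining these inclusions yields $HPol(N-d-1,d+1)=W_{N-d-1}\subseteq W_0$, hence equality.

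The only point needing care is ensuring $|\{1,\ldots,N\}\setminus I'|\geq d+2$ at every step; this holds automatically because that cardinality equals $d+2+j\geq d+2$, so there is no genuine obstacle. Conceptually, weak non-degeneracy is the right hypothesis precisely because all $N$ forms are available throughout the induction, leaving at least $d+2$ forms to span $L$ at each stage; in Lemma~\ref{lm:product}, by contrast, one vertex was singled out and only $N-1$ forms were used, leaving only $d+1$ available at the base of the induction and so demanding strong non-degeneracy.
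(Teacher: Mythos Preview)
Your proof is correct and follows essentially the same approach as the paper's: the ``only if'' direction via a common zero of a non-spanning $(d+2)$-tuple is identical, and the ``if'' direction rests on the same replacement step (weak non-degeneracy guarantees at least $d+2$ unused forms, so a monomial factor $u_k$ can be traded for a new linear form not yet in the product). The only cosmetic difference is that the paper organizes the ``if'' direction in two stages---first showing that products with repetitions span, then eliminating repetitions by an extremal argument---whereas you package the same mechanism as a single descending induction along the filtration $W_j$.
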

\begin{proof}
%%%%%%%%%%%%%%%%%%%%%%%%%%%%%%%%%%%%%%%%%%%%%%%%%%%%%%%%%%%%%%%%%%%%%%%%%%%%%%%%%%%%%%%%%%%%%%%%
We have $N$ (non-homogeneous) linear forms $l_1\dots,l_N$ in  variables $\bu=(u_1,\dots, u_d)$ and the  linear 
space $V_\LL$ spanned by  all possible products of $(N-d-1)$-tuples of  distinct forms. 
We need to investigate whether  $V_\LL$ coincides with $Pol(N-d-1, d)$. 
Homogenizing, we consider the same question for the linear  homogeneous forms  and the homogeneous 
polynomials of degree  $N-d-1$ in  variables $(u_0,u_1,\dots, u_d)$. 

First  assume that there are $d+2$ linear forms $l_1,\dots,l_{d+2}$ which are not spanning. 
Then one can find a non-zero vector $\z_0\in\RR^{d+1}$, such that $l_1(z)=\dots=l_{d+2}(z)=0$. Note that
each product of $N-d-1$ different forms chosen from  $l_1,\dots, l_N$ contains at least one form among $\{l_1,\dots,l_{d+2}\}$.
Therefore any linear combination of products of $N-d-1$ forms vanishes at $\z_0$. Thus $V_\LL$ cannot coincide with  
$HPol(N-d-1,d+1)$.

Conversely, assume that every $(d+2)$-tuple of  distinct forms among $l_1,\dots, l_N$ is spanning.
First, we notice that $HPol(N-d-1,d+1)$   can be spanned 
by the all possible products of $N-d-1$ linear forms (not necessarily pairwise distinct). Indeed, since 
first $d+2$ forms span the dual space of  $\RR^{d+1}$, we can express each variable $x_i$ as a linear combination 
of these forms. Therefore every monomial of degree $N-d-1$ can be expressed as a linear combination
of products of $N-d-1$ forms. 

Now we show that each product of $N-d-1$, not necessarily distinct, forms can be expressed as 
a linear combination of the products of distinct ones. Assume the contrary and consider 
monomials $l_1^{i_1}\dots l_N^{i_N}$ of degree $N-d-1$ which cannot be expressed as a linear combination
of products with all distinct forms. Among those monomials we take a monomial $\bm= l_1^{k_1}\dots l_N^{k_N}$ having the 
maximal number of distinct forms in the product. Since $\bm$ is not a product of all distinct forms, it should contain a
form $l_i$ in some power $k_i\ge 2$. Given that $k_i\ge 2$ and the degree of $\bm$ is $N-d-1$, one can find $d+2$ distinct forms $l_{i_1},\dots,l_{i_{d+2}}$ that do not appear in $\bm$. Since any $d+2$ of our forms span the dual space of $\RR^{d+1}$, we can express 
$l_i$ as a linear combination of $l_{i_1},\dots,l_{i_{d+2}}$. Now rewrite $\bm$ as 
$\left(\alpha_1\cdot l_{i_1}+\dots+\alpha_{d+2}\cdot l_{i_{d+2}}\right) l_1^{k_1}\dots l_i^{k_i-1}\dots l_N^{k_N}$, where 
$\alpha_1\cdot l_{i_1}+\dots+\alpha_{d+2}\cdot l_{i_{d+2}}=l_i.$ Thus we get an expression of $\bm$
as a linear combination of monomials $\alpha_j\cdot l_{i_{j}}l_1^{k_1}\dots l_i^{k_i-1}\dots l_N^{k_N}$, where each
such monomial has more distinct forms than $\bm$. Each of such monomials can be expressed as a linear combination of 
products of all distinct forms, since $\bm$ was chosen as a monomial with the maximal possible number of distinct 
forms, which cannot be expressed in such a way. This is a contradiction. Therefore $\bm$ can  also be expressed 
as a linear combination of products of all distinct forms.
\end{proof}

Below we consider the inverse problem for a weakly non-degenerate $S$, using 
notation from \eqref{eq:Lmatrix} and \eqref{eq:fullminor}. Here we no longer  have 
a natural basis of all simplices sharing a  common vertex $\bv_N$.  Because of that we need to consider all $N$ points and 
include  one more linear form $l_N$ into the corresponding matrix $\bL$. Slightly abusing our notation,
we denote by $\bL$  the same matrix as before, although it contains one more (last) column corresponding to $\bv_N$.  Similarly to  notation \eqref{eq:fullminor}, for a given set $J$ of $d+1$ linear 
forms, we denote by  $\bL(J)$  the determinant of the corresponding $(d+1)\times(d+1)$-minor of $\bL$. 

We introduce the extended $\binom{N-1}{d}\times\binom{N}{d+1}$-matrix $\widetilde{Mat}_{S}$ with columns consisting of the coefficients  of 
the homogeneous polynomial $l_{i_1}(\bu)\dots l_{i_{N-d-1}}(\bu)$ with respect to the monomial basis in the variables $(u_0,u_1,\dots,u_d).$  
By  Lemma~\ref{lem:generic},  $\widetilde{Mat}_{S}$ has full rank,  
since it determines a surjective linear map  {\em onto} $HPol(N-d-1,d+1)$. 
Thus $\widetilde{Mat}_{S}$ has a maximal minor with a non-vanishing determinant. 
Formula \eqref{eq:determinant} holds for the  determinant of any  maximal minor of  $\widetilde{Mat}_{S}$.

\begin{lemma}
\label{re:determinants}
Let $\cS$ be any set of $\binom{N-1}{d}$ columns of $\widetilde{Mat}_{S}$. We label 
each column $T\in\cS$ by the corresponding  subset  of the linear forms $l_1,\dots , l_N$ of cardinality $N-d-1$.  
Then the determinant of the maximal minor $\widetilde{Mat}_{S}(\cS)$ 
formed by the columns of $\cS$ is given by:  
\begin{equation}
\label{eq:determinant}
\det\left[\widetilde{Mat}_{S}(\cS)\right] = k(\cS)\cdot \prod_{\substack{J\in {N \brack d+1}:\\\forall T\in\cS~T\cap J\neq\emptyset}} \bL(J),
\end{equation}
where $k(\cS)$ is a constant (possibly equal to zero) depending only on the combinatorial structure of the $(N-d-1)$-tuples  
in the set $\cS$.
\end{lemma}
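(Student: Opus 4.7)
The plan is to view the coefficients of $l_1,\dots,l_N$ as independent indeterminates, regard $\det[\widetilde{Mat}_S(\cS)]$ as a polynomial in them, and establish the factorisation \eqref{eq:determinant} by a divisibility argument combined with a degree comparison.

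First I would prove that, for every $(d+1)$-subset $J\subseteq[N]$ satisfying $J\cap T\neq\emptyset$ for all $T\in\cS$, the polynomial $\bL(J)$ divides $\det[\widetilde{Mat}_S(\cS)]$. The idea is to specialise the coefficients so that $\bL(J)=0$; then the $d+1$ linear forms $\{l_j:j\in J\}$ are linearly dependent and share a common nonzero zero $\z_0\in\bR^{d+1}$. Because every $T\in\cS$ meets $J$, each product $\prod_{j\in T}l_j$ vanishes at $\z_0$, so all columns of $\widetilde{Mat}_S(\cS)$, viewed as coefficient vectors in $HPol(N-d-1,d+1)$ in the monomial basis, lie in the evaluation-at-$\z_0$ hyperplane, forcing the determinant to vanish. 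Since $\bL(J)$ is irreducible (the classical irreducibility of the determinant of a generic square matrix, here with one row of $1$'s), this geometric vanishing upgrades to divisibility; and distinct $\bL(J)$'s are pairwise coprime because they are distinct irreducibles distinguishable by specialisations that kill one but not the other. Hence $\prod_J\bL(J)$ divides $\det[\widetilde{Mat}_S(\cS)]$.

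Next I would match total degrees in the coefficients. Each entry of $\widetilde{Mat}_S$ is built from a product of $N-d-1$ linear forms, so $\det[\widetilde{Mat}_S(\cS)]$ has degree at most $(N-d-1)\binom{N-1}{d}$. Since $|J|+|T|=N$, the condition $J\cap T\neq\emptyset$ amounts to $[N]\setminus J\neq T$, so the $J$'s appearing in the product are precisely those whose complement (of size $N-d-1$) does not lie in $\cS$; their count is $\binom{N}{d+1}-|\cS|=\binom{N-1}{d+1}$. Each $\bL(J)$ has degree $d+1$, and the elementary identity $(d+1)\binom{N-1}{d+1}=(N-d-1)\binom{N-1}{d}$ shows that the two sides of \eqref{eq:determinant} have equal degree. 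Consequently the quotient is a scalar, and by its construction through the Leibniz expansion of the determinant it depends only on the combinatorial data of which $(N-d-1)$-subsets belong to $\cS$; this scalar may vanish precisely when the chosen columns are generically linearly dependent.

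The main obstacle I expect is the irreducibility-and-coprimality step that converts the geometric vanishing of $\det[\widetilde{Mat}_S(\cS)]$ on each locus $\{\bL(J)=0\}$ into an algebraic divisibility conclusion and then into divisibility by the full product. Once those polynomial-algebra facts (irreducibility of the generic determinant under the all-ones-row constraint, and coprimality across different $J$) are secured, the rest is a careful degree count and the combinatorial bookkeeping for $k(\cS)$.
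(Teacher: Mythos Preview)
Your proposal is correct and follows essentially the same route as the paper: treat the entries of $\bL$ as indeterminates, show each $\bL(J)$ divides the minor by a vanishing argument at a common zero $\bu_0$ of the forms in $J$, invoke irreducibility and pairwise coprimality of the $\bL(J)$'s, and finish with the degree count $(d+1)\binom{N-1}{d+1}=(N-d-1)\binom{N-1}{d}$. The only noteworthy difference is that the paper takes all $N(d+1)$ entries of $\bL$ (including the first row) as free variables, so that $\bL(J)$ is literally a generic $(d+1)\times(d+1)$ determinant and its irreducibility is a direct citation; you instead keep the first row equal to $1$'s, which is also fine but requires the (true, and easy) observation that the determinant with one row of $1$'s is still irreducible---e.g.\ by subtracting the first column from the others to reduce to a generic $d\times d$ determinant in algebraically independent variables.
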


\begin{proof}
Fix the set $\cS$ as above. In what follows, we treat both sides of  \eqref{eq:determinant} as complex-valued polynomials  in 
$N\cdot (d+1)$ variables, these variables being the entries of  matrix $\bL$. 

We first show that every determinant $\bL(J)$ divides $\det\left[\widetilde{Mat}_{S}(\cS)\right]$. 
Indeed, let $J=\{j_1,\dots,j_{d+1}\}$ be a set of $(d+1)$ forms which has  a nonempty intersection with any 
$(N-d-1)$-tuple of  forms in $\cS$. Let $\z=(z_1,\dots,z_{N\cdot (d+1)})$ be a zero of the polynomial $\bL(J)$, which means
that forms $l_{j_1},\dots,l_{j_{d+1}}$ comprised of the corresponding coordinates of $\z$ 
are linearly dependent. 
Therefore, there is a non-zero vector $\bu_0\in\RR^{d+1}$, such that $l_{j_1}(\bu_0)=\dots=l_{j_{d+1}}(\bu_0)=0$. Consider the row vector $(\bu_0^{I})$ consisting of $\binom{N-1}{d}$ homogeneous monomials of degree $N-d-1$ evaluated at $\bu_0.$
We notice that $(\bu_0^{I})$ is in the kernel of $\widetilde{Mat}_{S}(\cS)$, as the product of $(\bu_0^{I})$ with each column 
vector $T\in\cS$ of $\widetilde{Mat}_{S}(\cS)$ is equal to $\prod_{j\in T}l_j(\bu_0)$; and every set $T\in\cS$ contains at least 
one of the forms $l_{j_1},\dots,l_{j_{d+1}}$ in such a product. Thus $\det\left[\widetilde{Mat}_{S}(\cS)\right]$  also vanishes at  
such $\z$. 

We recall a well-known fact (cf. e.g. \cite{Boch}*{Theorem~61.1}) that 
\[
\label{eq:detLJ}
\bL(J)=\det \bordermatrix{~ & l_{j_1} & l_{j_2} & \dots & l_{j_{d+1}}  \cr
u_0    & z_{j_1}   & z_{j_2}   & \cdots & z_{j_{d+1}}           \cr
u_1    & z_{j_1+N} & z_{j_2+N} & \cdots & z_{j_{d+1}+N}         \cr
\vdots & \vdots    & \vdots    & \ddots & \vdots                \cr
u_{d}  & z_{j_1+dN}& z_{j_2+dN}& \cdots & z_{j_{d+1}+dN}        \cr
},
\]
is an irreducible complex-valued polynomial in  variables $z_{j_1},z_{j_2}\dots,z_{j_{d+1}+Nd}$. 
Now if every zero of an irreducible polynomial $p(z_1,\dots,z_{N(d+1)})$ annihilates another polynomial 
$q(z_1,\dots,z_{N(d+1)})$,  then $p$ divides $q$. We conclude that $\bL(J)$
 divides $\det\left[\widetilde{Mat}_{S}(\cS)\right]$.

%
% A proof of the statement: p,q\in \C[x1,...xk] and if p(z)=0 => q(z)=0 and p(z)=0 is irreducible, then p|q.
% By Nullstellensatz we get that q^r = p * \C[x1,...xk] for some r. We also know that \C[x1,...,xk] is 
% a unique factorization domain. Thus we may conclude that p|q, because p is irreducible.  
%

Using the fact that each $\bL(J)$ is an irreducible polynomial and all $\bL(J)$'s are pairwise distinct (i.e., 
have distinct sets of projective zeros) 
we conclude that the product of $\bL(J)$'s in the right-hand side of \eqref{eq:determinant}
divides $\det\left[\widetilde{Mat}_{S}(\cS)\right].$

Finally, the product of $\bL(J)$'s has the degree 
\begin{equation}
\label{eq:degree_products}
(d+1)\left|\left.\left\{J\in {N \brack d+1} \right| \forall T\in\cS\quad T\cap J\neq\emptyset\right\}\right|.
\end{equation}
We observe that for each $T\in\cS$, the complementary set of $d+1$ forms cannot be taken as a feasible $J$.
We  notice further that these complements are the only exceptions for the choice of   $J$. Therefore  
as a feasible $J$ we can pick any of $\binom{N}{d+1}$ $(d+1)$-tuples except those $\binom{N-1}{d}$ 
complements of a $T\in\cS$. Therefore, \eqref{eq:degree_products} equals
\[
(d+1)\left(\binom{N}{d+1}-\binom{N-1}{d}\right)=(d+1)\binom{N-1}{d+1}=(N-d-1)\binom{N-1}{d}.
\]
The latter expression  coincides with  the degree of the polynomial $\det\left[\widetilde{Mat}_{S}(\cS)\right]$
(assuming that it is not a zero), as $\widetilde{Mat}_{S}(\cS)$ has $\binom{N-1}{d}$ columns and each entry is a homogeneous polynomial of degree $N-d-1$.

Hence $\widetilde{Mat}_{S}(\cS)$ coincides with  the product of $\bL(J)$'s up to a constant factor which might vanish. This constant does not depend on the entries of  matrix $\bL$
and hence it is completely determined by the set $\cS$, regardless of the location of points of $S$ in $\RR^{d}$.
% and corresponding coefficients of the linear forms $l_{j_k}$'s. 
\end{proof}

 Lemma~\ref{lem:generic} allows us to solve the inverse moment  problem  for a given weakly non-degenerate $S=\{\bv_1,\dots, \bv_N\}$ in  a certain linear space  $\widetilde{\M}(S)\supseteq\M^\De(S)$ of measures supported on $\conv(S)$. Namely, $\widetilde{\M}(S)$ is spanned by measures $\mu\in \widetilde{\M}(S)$ whose   normalized moment generating functions $F_\mu(\bu)$ belong to  $\Rat(S)$, 
i.e. $F_\mu(\bu)=P(\bu)/\Phi_S(\bu)$, where $\Phi_S(\bu)=\prod_{j=1}^N l_j(\bu)$ and $P(\bu)$ is a polynomial of degree at most $N-d-1$.  Indeed, by Lemma~\ref{lem:generic} any $R(\bu)\in \Rat(S)$ can be represented in the form
\begin{equation}\label{eq:R(u)}
R(\bu)=\sum_{i_1<i_2<\dots < i_{d+1}\le N}\frac{K_{i_1i_2\dots i_{d+1}}}{l_{i_1}l_{i_2}\dots l_{i_{d+1}}},
\end{equation}
with some real constants $K_{i_1i_2\dots i_{d+1}}$. If $\bv_{i_1}, \bv_{i_2},\dots , \bv_{i_{d+1}}$ span $\bR^d$ then the term 
$\frac{K_{i_1i_2\dots i_{d+1}}}{l_{i_1}l_{i_2}\dots l_{i_{d+1}}}$ can be interpreted as the normalized moment generating function of an appropriately scaled standard measure of the $d$-dimensional simplex spanned by these vertices. 

If $\bv_{i_1}, \bv_{i_2},\dots , \bv_{i_{d+1}}$ only span a hyperplane $H$ in $\bR^d$ then \eqref{eq:R(u)} 
corresponds to a singular (w.r.t. to the Lebesgue measure on $\bR^d$) measure $\mu_\delta$ supported on $\delta=\conv(\bv_{i_1},\dots, \bv_{i_{d+1}})$. 
One way to define it as the {weak limit} of a sequence of 
(absolutely continuous with respect to  the Lebesgue measure on $\bR^d$) measures---the  
appropriately scaled standard measures $\mu_{\delta_t}$ of family of $d$-dimensional simplices $\delta_t$ which 
degenerate into   
$\delta$ when $t=0$. There is no loss in generality in assuming
$K_{i_1i_2\dots i_{d+1}}=1$, i.e., to deal with probability measures.

\begin{proposition}\label{pr:degsimplex} Let $\bW=\{\bw_1,\dots, \bw_{d}, \bw_{d+1}\}$ be a $(d+1)$-tuple of points in $\bR^d$ such that $\bW$ spans a hyperplane $H\subset \bR^d$. Denote by $l_{\bw_1}=1-\lan \bw_1,\bu\ran,\dots, l_{\bw_{d+1}}=1-\lan \bw_{d+1},\bu\ran$  the associated linear forms. There exists a unique measure $\mu_{\bW}$ supported on $\delta=\conv({\bW})$ with the normalized moment generating function $F_{\mu_{\bW}}(\bu)$ given by   
\begin{equation}\label{eq:temp}
F_{\mu_{\bW}}(\bu)=\frac{1}{l_{\bw_1}l_{\bw_2}\dots l_{\bw_{d+1}}}.
\end{equation} 
\end{proposition}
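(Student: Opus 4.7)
The plan is to obtain uniqueness from moment-determinacy of compactly supported measures, produce $\mu_\bW$ as a pushforward of Lebesgue measure on a reference simplex, and verify \eqref{eq:temp} by perturbing $\bw_{d+1}$ out of the hyperplane $H$ and invoking Corollary~\ref{cor:simplex} before passing to the limit.

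Uniqueness is immediate: the Taylor coefficients of $F_{\mu_\bW}(\bu)$ at the origin recover, up to known combinatorial factors, every moment of $\mu_\bW$, and any Borel measure supported on the compact set $\delta$ is determined by its moments. For existence, let
\[
\Delta^{\mathrm{std}} := \{(\lambda_1,\dots,\lambda_d) \in \bR^d : \lambda_i \ge 0,\ \textstyle\sum_i \lambda_i \le 1\},
\]
and let $\phi_\bW : \Delta^{\mathrm{std}} \to \delta$ be the affine map $\phi_\bW(\lambda) := \sum_{i=1}^{d} \lambda_i \bw_i + (1 - \sum_{i=1}^d \lambda_i) \bw_{d+1}$. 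I would then set $\mu_\bW := (\phi_\bW)_*\bigl(\mathrm{Leb}|_{\Delta^{\mathrm{std}}}\bigr)$, a compactly supported Borel measure on $\delta \subset H$ of total mass $1/d!$.

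To identify $F_{\mu_\bW}$, fix any $\bfe \in \bR^d \setminus H$ and, for $t \in \bR$, set $\bw_{d+1}^t := \bw_{d+1} + t\bfe$, $\bW_t := \{\bw_1,\dots,\bw_d,\bw_{d+1}^t\}$, and $\delta_t := \conv(\bW_t)$. For $t \neq 0$ the set $\bW_t$ is affinely independent, so $\delta_t$ is a non-degenerate $d$-simplex. A direct change-of-variables computation shows that $(\phi_{\bW_t})_*(\mathrm{Leb}|_{\Delta^{\mathrm{std}}}) = \mu_{\delta_t}/(d!\,\Vol(\delta_t))$, hence Corollary~\ref{cor:simplex} together with the linearity of $F_{(\cdot)}$ in the measure yields
\[
F_{(\phi_{\bW_t})_*(\mathrm{Leb}|_{\Delta^{\mathrm{std}}})}(\bu) \;=\; \frac{1}{l_{\bw_1}(\bu) \cdots l_{\bw_d}(\bu)\,l_{\bw_{d+1}^t}(\bu)}.
\]

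As $t \to 0$, the affine maps $\phi_{\bW_t}$ converge uniformly on $\Delta^{\mathrm{std}}$ to $\phi_\bW$ and their images lie in a common compact set, so dominated convergence gives $m_I\bigl((\phi_{\bW_t})_*(\mathrm{Leb}|_{\Delta^{\mathrm{std}}})\bigr) \to m_I(\mu_\bW)$ for every multiindex $I$. Hence the left-hand side of the previous display converges to $F_{\mu_\bW}(\bu)$ coefficient-wise in its Taylor expansion, and therefore pointwise on a neighbourhood of the origin, while the right-hand side converges to $1/\prod_{i=1}^{d+1} l_{\bw_i}(\bu)$. This proves \eqref{eq:temp}. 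The principal subtlety is that $\mu_\bW$ is singular with respect to $d$-dimensional Lebesgue measure on $\bR^d$ (it lives on the lower-dimensional set $\delta$), so Corollary~\ref{cor:simplex} cannot be invoked directly; this is precisely what forces the pushforward description of $\mu_\bW$ and the perturbative limit argument.
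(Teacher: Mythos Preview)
Your argument is essentially the paper's own: perturb one vertex off $H$, apply Corollary~\ref{cor:simplex} to the resulting non-degenerate simplices, and pass to the limit; uniqueness via moment-determinacy of compactly supported measures is likewise what the paper invokes. Your version is in fact more explicit---you identify the limit measure concretely as the pushforward $(\phi_\bW)_*(\mathrm{Leb}|_{\Delta^{\mathrm{std}}})$ and check convergence of each moment by dominated convergence, whereas the paper just appeals to weak convergence of the family $\mu_{\delta_t}$.

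Two small points to tighten. First, the assertion ``for $t\neq 0$ the set $\bW_t$ is affinely independent'' needs $\{\bw_1,\dots,\bw_d\}$ to already span $H$; since $\bW$ spans a hyperplane, some $d$-subset does, but not necessarily the first $d$, so a WLOG relabelling (which the paper makes explicitly) is required before you choose which vertex to perturb. Second, the condition ``$\bfe\in\bR^d\setminus H$'' is not quite right when $H$ does not pass through the origin: what you need is that $\bfe$ is not parallel to $H$, i.e.\ $\bfe\notin H-\bw_{d+1}$; the paper simply takes $\bfe$ to be a unit normal to $H$. With these cosmetic fixes your proof is complete.
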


\begin{proof}
Without loss of generality assume that $\bW=\{\bw_1,\dots,\bw_{d},  \bw_{d+1}\}$  is
ordered in such a way that $\{\bw_1,\dots, \bw_{d}\}$ span $H$. 
 Then, $\delta_t$ is defined as $\delta_t=\conv(\delta, \bw_{i_{d+1}}+t\z)$, with $\z$ a unit 
normal to $H$, and $\mu_{\delta_t}$ as the uniform density probability measure supported on $\delta_t$.
Then $\lim\limits_{t\to 0}\mu_{\delta_t}=\mu_\delta$, where $\lim$ is understood in sense
of \emph{weak convergence of distributions (measures)}, i.e., that $\lim\limits_{t\to 0}\int f d\mu_{\delta_t}=\int f d\mu_{\delta}$ for any
bounded, continuous real function on $\bR^d$, cf. e.g. \cite{MR1700749}.
Then, this measure has compact support, and thus is determined by its moments, cf. e.g. 
\cite{MR2244695}*{Proposition~3.2}. 
\end{proof}

\begin{rema+} One can prove that the integration of a smooth compactly supported function $\phi$ with respect to  the limiting measure $\mu_{\bW}$ is given by the integration of $\phi$ over $\delta$ with a continuous piecewise linear weight function uniquely determined by $\delta$.  Similar limits appear frequently in the theory of splines. Since we only need the existence of $\mu_{\bW}$ we do not pursue this topic here. 
\end{rema+} 

%Proposition~\ref{pr:degsimplex} clarifies the structure of  the space $\widetilde{\M}(S)$. We will comment on bases of this space later. Otherwise, 
Our solution of the inverse moment problem for the linear space $\widetilde{\M}(S)$ closely follows the pattern presented in Example~\ref{ex:two}. In other words, given a weakly non-degenerate $S$ and  the set of moments up to order $N-d-1$ we 
\begin{enumerate}[(i)]
\item produce  the rational function $R(\bu)\in \Rat(S)$ with 
Taylor coefficients coinciding with the normalized moments; 
\item represent $R(\bu)$ in the form \eqref{eq:R(u)}; 
\item for each term as in \eqref{eq:temp}, determine the underlying measure supported on the (probably degenerate) convex hull of the vertices $\bv_{i_1},\bv_{i_2},\dots,\bv_{i_{d+1}}$. 
\end{enumerate}
We can now prove our central result claiming that   $\M^\De(S)=\M(S)$ for a weakly non-degenerate $S$. 
\begin{proof}[Proof of Theorem~\ref{th:fund}] 
Theorem~\ref{th:fund} is already settled in Corollary~\ref{cor:generic}
for the case of strongly non-degenerate $S$.
It remains to consider the case of weakly
non-degenerate $S$.  The denominator of the moment generating function
$F_\cP(\bu)$ for an arbitrary generalized polytope $\cP$ with the vertex set
$S$ is of the form $\Pi_{i=1}^Nl_i$ by Proposition~\ref{prop:gener}, and its
numerator belongs to $Pol(N-d-1,d)$. As $S$ is weakly non-degenerate,
$F_\cP(\bu)$ can be written as a linear combination of the fractions as in \eqref{eq:R(u)},  where $(i_1,i_2,\ldots
i_{d+1})$ runs over the set of $(d+1)$-tuples of indices. If a $(d+1)$-tuple
$l_{i_1}, l_{i_2}, \ldots l_{i_{d+1}}$ is spanning then $\frac{K}
{l_{i_1}l_{i_2} \ldots l_{i_{d+1}}}$ is the moment generating function of the
measure supported on the simplex $\Delta$, determined by its denominator, with the 
uniform density
$K/d!\text{Vol}(\Delta)$. By Proposition~\ref{pr:degsimplex}, if a
$(d+1)$-tuple $l_{i_1}, l_{i_2}, \ldots l_{i_{d+1}}$ is not spanning then 
$\frac{K} {l_{i_1}l_{i_2} \ldots l_{i_{d+1}}}$ is the moment
generating function of a singular measure supported on a degenerate simplex.
As $\cP$ is a generalized polytope, its standard measure has no singular
components. Therefore, no degenerate simplices can appear in its decomposition.
\end{proof}
\begin{rema+} The latter proof demonstrates that if one starts from the set of moments of the 
standard measure $\mu$ of a polytope with the vertex set $S$ then %its $F_\mu(\bu)$ must lie in $\Rat(S)$ and 
we never obtain degenerate simplices while solving the inverse moment problem.
This is why $\M^\De(S)=\M(S)$. 
However, an explicit description of $\F^\De(S)$ for a general weakly non-degenerate $S$ is missing at
present.  For concrete  Examples~\ref{ex:three} and \ref{ex:four}, we give these descriptions below. 
\end{rema+}

Our final result computes $\dim \M^\De(S)$ and describes a procedure to construct 
a basis for  $\M^\De(S)$.

\begin{proposition}\label{pr:basic} 
Let $S=\{\bv_1,\ldots, \bv_N\}\subset \bR^d$ be an arbitrary weakly non-degene\-rate spanning set. Then 
\begin{enumerate}[(i)]
\item\label{it:i}
$\dim \M^\De(S) =\binom {N-1}{d}-\sharp_{deg}$ where $\sharp_{deg}$ is the number of degenerate simplices, i.e., the number of non-spanning $(d+1)$-tuples of points of $S$.
\item\label{it:relsi}
If $\delta$ is a degenerate $d$-dimensional simplex with vertices in $S\setminus\{\bv_i\}$ 
then there is exactly one linear dependence among the standard measures of all $d$-dimensional simplices 
on $\bv_i$ and $d$ vertices of $\delta$. 
\item\label{it:relsii}The standard measure of any $d$-dimensional simplex on $\bv_i$
is contained in at most one dependence as in (\ref{it:relsi}).
\item\label{it:ii}
For any vertex $\bv_i$, one can construct a (in general, non-unique)  basis $\B_i$ of $\M^\De(S)$ 
consisting of standard measures of $d$-dimensional simplices on $\bv_i$, as follows. 
\begin{enumerate}[(a)]
\item\label{it:a}
Start from the set $\B_i$ of the $d$-dimensional simplices on $\bv_i$.
\item \label{it:b}
For each degenerate simplex $\delta$ not containing $\bv_i$,
remove from $\B_i$ the standard measure of an arbitrary simplex on $\bv_i$ from the corresponding to $\delta$
linear dependence, cf. (\ref{it:i}).
\end{enumerate}
Thus we obtain $\binom {N-1}{d}-\sharp_{deg}$ standard measures of 
$d$-dimensional simplices, forming a basis of $\M^\De(S)$.
\end{enumerate}
\end{proposition}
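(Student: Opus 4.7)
Our strategy is to establish parts~(ii) and~(iii) by explicit construction via the Fantappi\`e transform, derive part~(i) from matching dimension bounds, and then read off part~(iv).

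For part~(ii), given a degenerate $(d+1)$-tuple $\delta=\{\bv_{j_1},\ldots,\bv_{j_{d+1}}\}$ with $\bv_i\notin\delta$, form the $d+1$ candidate simplices $\sigma_k=\conv(\bv_i,\delta\setminus\{\bv_{j_k}\})$. By Corollary~\ref{cor:simplex}, a linear relation $\sum_k c_k\mu_{\sigma_k}=0$ is equivalent, after taking Fantappi\`e transforms and clearing the common denominator $(1-\lan\bv_i,\bu\ran)\prod_r(1-\lan\bv_{j_r},\bu\ran)$, to the polynomial identity
\[
\sum_{k=1}^{d+1}a_k\,(1-\lan\bv_{j_k},\bu\ran)=0,\qquad\text{where }a_k:=c_k\,d!\Vol(\sigma_k),
\]
which splits into $\sum_k a_k=0$ and $\sum_k a_k\bv_{j_k}=0$ --- exactly an affine dependence among the $\bv_{j_r}$'s. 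The degeneracy of $\delta$ produces such a dependence; weak non-degeneracy forces $\delta$ to affinely span the full hyperplane it lies in (else a $(d+2)$-subset would lie in a proper subspace), so the dependence is $1$-dimensional, and $a_k$ vanishes precisely when $\sigma_k$ itself is degenerate. For part~(iii), if a non-degenerate $\sigma=\conv(\bv_i,\bv_{k_1},\ldots,\bv_{k_d})$ appeared in two distinct dependencies of the above type, both corresponding tuples would be of the form $\{\bv_{k_1},\ldots,\bv_{k_d}\}\cup\{\bv_j\}$ with $\bv_j$ in the hyperplane $H'$ affinely spanned by $\bv_{k_1},\ldots,\bv_{k_d}$, and two such $j\neq j'$ would put a $(d+2)$-subset in $H'$, contradicting weak non-degeneracy.

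Part~(i) is then obtained by a dimension sandwich. The upper bound comes from iterating Lemma~\ref{lm:simplex} (in the spirit of the proof of Proposition~\ref{prop:gener}) to present $\M^\De(S)$ as the span of the $\binom{N-1}{d}-\sharp_{deg,i}$ non-degenerate $d$-simplices on $\bv_i$ (with $\sharp_{deg,i}$ denoting the number of degenerate $(d+1)$-tuples through $\bv_i$), modulo the $\sharp_{deg}-\sharp_{deg,i}$ relations furnished by~(ii), which by~(iii) have pairwise disjoint supports and are therefore linearly independent; this gives $\dim\M^\De(S)\le\binom{N-1}{d}-\sharp_{deg}$. For the matching lower bound, Lemma~\ref{lem:generic} together with Proposition~\ref{pr:degsimplex} upgrades the Fantappi\`e isomorphism~\eqref{eq:isom} to an isomorphism $\widetilde{\M}(S)\cong\Rat(S)$ of dimension $\binom{N-1}{d}$, while the quotient $\widetilde{\M}(S)/\M^\De(S)$ is spanned by the $\sharp_{deg}$ singular measures $\mu_\delta$ of Proposition~\ref{pr:degsimplex}. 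Part~(iv) is then immediate: the disjointness of supports in~(iii) lets us independently remove one simplex from each relation of~(ii), each removal killing exactly that relation, leaving $\binom{N-1}{d}-\sharp_{deg}$ simplices which by the sharp count of~(i) form a basis of $\M^\De(S)$.

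The main obstacle I anticipate lies in the lower bound of part~(i): verifying that the $\sharp_{deg}$ singular measures $\mu_\delta$ project to \emph{linearly independent} classes in $\widetilde{\M}(S)/\M^\De(S)$, so that this quotient really attains dimension $\sharp_{deg}$. Because distinct degenerate $(d+1)$-tuples may share a common affine hyperplane (or even overlap substantially), this is not a pure support-based argument. The cleanest route appears to be through the Fantappi\`e picture, showing that the rational functions $1/\prod_r(1-\lan\bv_{j_r},\bu\ran)$ indexed by distinct degenerate $(d+1)$-tuples remain linearly independent modulo $\F^\De(S)$, consistently with the spanning description of $\Rat(S)$ coming from Lemma~\ref{lem:generic}.
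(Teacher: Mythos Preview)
Your argument is correct and already closes; the obstacle you anticipate is illusory. In your dimension sandwich for~(i), the lower bound only requires $\dim\bigl(\widetilde{\M}(S)/\M^\De(S)\bigr)\le\sharp_{deg}$, which you already have from the quotient being \emph{spanned} by the $\sharp_{deg}$ classes of the singular measures. Combined with your upper bound this forces equality throughout, so the linear independence of those classes drops out a posteriori rather than being a hypothesis you must verify. And even if you wanted to prove that independence directly, it is immediate from the very observation you exploit in~(iii): two distinct degenerate $(d+1)$-tuples of a weakly non-degenerate $S$ cannot lie in a common hyperplane, since their union would be a non-spanning $(d+2)$-subset. Hence the measures $\mu_\delta$ are supported in pairwise distinct hyperplanes and are linearly independent already in $\widetilde{\M}(S)$. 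This is precisely the paper's route: it writes $\widetilde{\M}(S)=\M(S)\oplus\M_{deg}(S)$ as the direct sum of the absolutely continuous and singular parts, notes $\dim\M_{deg}(S)=\sharp_{deg}$ by the distinct-hyperplanes observation, and reads off $\dim\M^\De(S)$.

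Your treatment of~(ii) differs from the paper's and is arguably cleaner. You pass through the Fantappi\`e transform and reduce the sought relation to the unique (up to scaling) affine dependence among the vertices of $\delta$, with the degeneracy of individual $\sigma_k$'s matching the vanishing of the corresponding coefficient. The paper instead regards $\delta$ as a weakly non-degenerate set inside its supporting hyperplane $H\cong\bR^{d-1}$ and applies part~(i) recursively in dimension $d-1$ to count relations among the $(d-1)$-simplices on $\delta$. Your approach is self-contained and yields the relation explicitly; the paper's makes the inductive structure in $d$ visible but relies on having~(i) in hand first. Parts~(iii) and~(iv) are handled the same way in both.
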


\begin{proof} To prove (\ref{it:i}), notice that $\dim \widetilde{\M}(S)=\dim\Rat(S)=\binom{N-1}{d}$. 
As well, $\widetilde{\M}(S)=\M(S)\oplus \M_{deg}(S)$, 
where $\M_{deg}(S)$ is the linear span of the measures $\mu_\delta^{(1)}$ with $\delta$ 
running over the set of all degenerate simplices spanned by  $(d+1)$-tuples of dependent vertices in $S$, 
cf. Proposition~\ref{pr:degsimplex}. 
Observe that these measures $\mu_\delta^{(1)}$  are linearly independent,
as each degenerate simplex defines a singular measure supported in a proper hyperplane, and these
hyperplanes differ for different degenerate simplices.
We are done with (\ref{it:i}).

Let $\Sigma_0$ be a dependent $d+1$-subset of $S$, and 
$\delta=\conv(\Sigma_0)$ be as in  (\ref{it:relsi}). Then $\delta$ spans a hyperplane $H_0$.
As each $d$-dimensional simplex on $\sigma_0:=\bv_i$ and $d$ vertices from $\Sigma_0$ is uniquely
defined by the latter, it suffices to analyze dependencies between the standard measures of
$d-1$-simplices with vertices in $\Sigma_0$.
 
We can view $\Sigma_0$ as a weakly non-degenerate subset in $\bR^{d-1}\cong
H_0$.  By (\ref{it:i}), we have
$\dim\M^\De(\Sigma_0)=\binom{d}{d-1}-\sharp_{deg}(\Sigma_0)$.  If $\Sigma_0$ is
strongly non-degenerate as a subset of $H_0\cong\bR^{d-1}$, i.e.
$\sharp_{deg}(\Sigma_0)=0$, then $\dim\M^\De(\Sigma_0)=d$, i.e., there is
exactly one linear dependence between the standard measures of $d-1$-simplices
with vertices in $\Sigma_0$, and we are done.  Otherwise,
$\Sigma_0=\{\sigma_1\}\cup\Sigma_1$, with $\Sigma_1$ spanning a hyperplane
$H_1$ in $H_0$.  Moreover, this can only happen if $d\geq 3$. Now, we can
repeat the whole argument with $\sigma_1$ in place of $\sigma_0$, $\Sigma_1$ in place of
$\Sigma_0$, and $H_1$ in place of $H_0$.
Again, we either have $\Sigma_1$ strongly degenerate, and we are done, or we 
repeat this argument, etc., until
we hit a strongly non-degenerate $\Sigma_k$, which is bound to happen, as the
dimension goes down each iteration. This completes the proof of
(\ref{it:relsi}).

Then, (\ref{it:relsii}) stems from the fact that the vertices of
$d$-dimensional simplex on $\bv_i$ distinct from $\bv_i$ span a hyperplane, and the only possibility for
a degenerate simplex $\delta$ as in (\ref{it:relsi}) is to lie in this hyperplane.
 
Finally, to prove  (\ref{it:ii}), 
observe that the set $\B_i'$ of the standard measures of 
$d$-dimensional simplices containing a given vertex $\bv_i$ always spans $\M^\De(S)$, see Lemma~\ref{lm:simplex}.  
Now for each degenerate $d$-simplex $\delta$,   
we prune  $\B_i'$ by removing the standard measure of a simplex in the linear dependence corresponding to 
$\delta$. In view of (\ref{it:relsi}) and (\ref{it:relsii}) 
this process is well-defined and unambiguous. In the end we obtain
$\binom {N-1}{d}-\sharp_{deg}$ standard measures of 
$d$-dimensional simplices. In view of (\ref{it:i}) they form a basis of $\M^\De(S)$, as claimed. 
\end{proof}

\begin{remark}\label{rem:non-vanishing-minor} 
The above discussions show that the columns of $\widetilde{Mat}_S$ corresponding to degenerate simplices must necessarily be included in any non-vanishing maximal minor $\widetilde{Mat}_S(\cS)$. 
 \end{remark}

\medskip
We conclude our discussion of the weakly non-degenerate case with two examples. 

\begin{figure}[ht]
\begin{center}
\begin{picture}(360,150)(0,0)
\put(60,20){\line(1,0){100}}
%\put(220,20){\line(1,1){100}}
\put(60,20){\line(0,1){100}}
\put(60,20){\line(1,1){100}}
\put(160,20){\line(-1,1){100}}
%\put(120,20){\line(2,1){200}}
%\put(120,20){\line(1,2){100}}
%\put(120,120){\line(1,-1){100}}
\put(60,120){\line(5,0){100}}
\put(160,20){\line(0,5){100}}

\put(36,9){$\bv_5=(0,0)$}
\put(145,9){$\bv_2=(2,0)$}
\put(105,80){$\bv_1=(1,1)$}
\put(145,125){$\bv_3=(2,2)$}
\put(36,125){$\bv_4=(0,2)$}

\put(60,20){\circle*{3}}
\put(110,70){\circle*{3}}
\put(160,20){\circle*{3}}
\put(160,120){\circle*{3}}
%\put(220,220){\circle*{3}}
\put(60,120){\circle*{3}}

\put(260,20){\line(1,0){100}}
%\put(220,20){\line(1,1){100}}
\put(260,20){\line(0,1){100}}
%\put(120,120){\line(1,1){100}}
%\put(220,220){\line(1,-1){100}}
%\put(120,20){\line(2,1){200}}
%\put(120,20){\line(1,2){100}}
%\put(120,120){\line(1,-1){100}}
\put(260,120){\line(1,-1){100}}
%\put(360,20){\line(0,5){100}}

\put(236,9){$\bv_1=\bv_5=(0,0)$}
\put(345,9){$\bv_2=(2,0)$}
%\put(325,120){$\bv_2=(2,1)$}
\put(315,70){$\bv_3=(1,1)$}
\put(236,125){$\bv_4=(0,2)$}

\put(260,20){\circle*{3}}
\put(260,20){\circle{8}}
\put(360,20){\circle*{3}}
\put(310,70){\circle*{3}}
%\put(220,220){\circle*{3}}
\put(260,120){\circle*{3}}
\put(260,20){\line(1,1){50}}

\end{picture}
\end{center}
\caption{Vertices for Examples \ref{ex:three} and \ref{ex:four}.  \label{fg:T2}}
\end{figure}
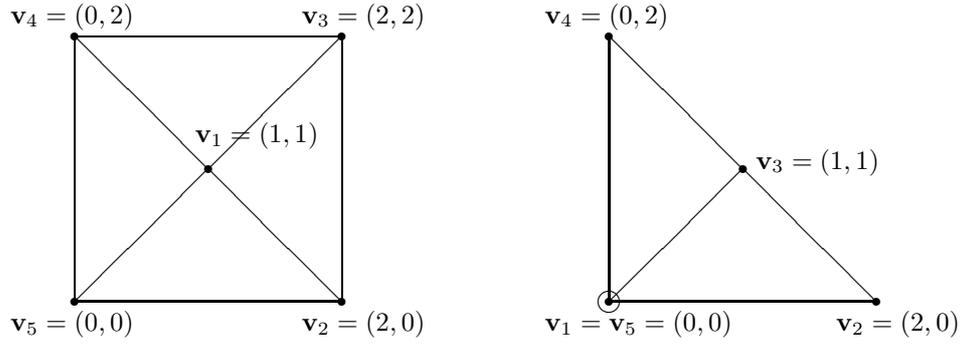

\begin{example}\label{ex:three}
Let $S=\{\bv_1,\bv_2, \bv_3,\bv_4,\bv_5\}$ where $\bv_1=(1,1), \bv_2=(2,0), \bv_3=(2,2), \bv_4=(0,2), \bv_5=(0,0)$. Then $l_1=1-u_1-u_2, l_2=1-2u_1, l_3=1-2u_1-2u_2, l_4=1-2u_2, l_5=1$. Calculating the products $l_il_j,\, i<j$ and taking their coefficients in the standard monomial basis of $Pol(2,2)$ we obtain the following $6\times 10$-matrix $\widetilde{Mat}_S$
$$\widetilde{Mat}_S=\bordermatrix{~ & l_1l_2 & l_1l_3 & l_1l_4 & l_1l_5&l_2l_3&l_2l_4&l_2l_5&l_3l_4&l_3l_5&l_4l_5 \cr 
    1        &1&1&1&1&1&1&1&1&1&1\cr
u_1       &-3&-3&-1&-1&-4&-2&-2&-2&-2&0&\cr
u_2       &-1 &-3&-3&-1&-2&-2&0&-4&-2&-2\cr
u_1^2   &2&2&0&0&4&0&0&0&0&0\cr
u_1u_2&2&4&2&0&4&4&0&4&0&0\cr
u_2^2&0&2&2&0&0&0&0&4&0&0}. $$
Its rank equals $6$ and one of  non-vanishing maximal minors consists of the columns with numbers $\cS=\{5,6,7,8,9,10\}$. (Recall that any non-vanishing maximal minor must include columns 6 and 9 corresponding to degenerate triples $(\bv_1,\bv_3,\bv_5)$ and $(\bv_1,\bv_2,\bv_4)$ resp.)  The corresponding submatrix $\widetilde{Mat}_S(\cS)$ equals 
$$\widetilde{Mat}_S(\cS)=\bordermatrix{~ & l_2l_3  & l_2l_4 & l_2l_5&l_3l_4&l_3l_5&l_4l_5\cr 
1&1&1&1&1&1&1\cr
u_1&-4&-2&-2&-2&-2&0\cr
u_2&-2&-2&0&-4&-2&-2&\cr
u_1^2&4&0&0&0&0&0\cr
u_1u_2&4&4&0&4&0&0\cr
u_2^2&0&0&0&4&0&0}. $$
Further, 
$$4\widetilde{Mat}_S^{-1}(\cS)=\bordermatrix{~ & 1  & u_1 & u_2&u_1^2&u_1u_2&u_2^2\cr 
l_2l_3&0&0&4&0&-4&4\cr
l_2l_4&0&0&0&0&-2&2\cr
l_2l_5&0&0&2&0&-2&0&\cr
l_3l_4&1&-1&0&0&-1&1\cr
l_3l_5&0&1&0&0&-1&0&\cr
l_4l_5&0&-1&1&1&-1&0}. $$
Thus, given an arbitrary  rational function $R(u_1,u_2)=P(u_1,u_2)/\Phi_S(u_1,u_2)$, where  
$P(u_1,u_2)=a_{00}+a_{1,0}u_1+a_{0,1}u_2+a_{2,0}u_1^2+a_{11}u_1u_2+a_{02}u_2^2$ is any 
polynomial  of degree at most $2$ and $\Phi_S(u_1,u_2)=l_1l_2l_3l_4l_5$, we obtain 
$$\begin{cases}
w_{145}=-a_{10}-a_{20}-a_{11}\\
w_{135}=-\frac{1}{2}(a_{11}-a_{02})\\
w_{134}=\frac{1}{2}(a_{01}-a_{11})\\
w_{125}=\frac{1}{4}(a_{00}-a_{10}-a_{11}+a_{02})\\
w_{124}=\frac{1}{4}(a_{10}-a_{11})\\
w_{123}=-\frac{1}{4}(a_{10}-a_{01}-a_{20}+a_{11}).\\
\end{cases}
$$
Triangles $\De_{135}$ and $ \De_{124}$ are degenerate which implies that  
if the original measure we are recovering is polygonal then $w_{135}=w_{124}=0$. 
Therefore the linear space  of numerators $P(u_1,u_2)$ for the space 
$\F^\De(S)$ in this example is given by the relation $$a_{01}=a_{11}=a_{02}.$$ 
\end{example}

Our last example is  more degenerate than the previous one, although still  weakly non-degenerate. In fact, in this example $S$ is a multiset since 
$\bv_1=\bv_5$. It shows that our technique can be generalized to a certain class of multisets as well.  
\begin{example}\label{ex:four}
Set $S=\{\bv_1,\bv_2, \bv_3,\bv_4,\bv_5\}$, where $\bv_1=\bv_5=(0,0), \bv_2=(2,0), \bv_3=(1,1), \bv_4=(0,2)$. Then $l_1=l_5=1, l_2=1-2u_1, l_3=1-u_1-u_2, l_4=1-2u_2$. Calculating all products $l_il_j,\, i<j$ and taking their coefficients in the standard monomial basis of $Pol(2,2)$, we obtain the following $6\times 10$-matrix $\widetilde{Mat}_S$:
$$\widetilde{Mat}_S=\bordermatrix{~ & l_1l_2 & l_1l_3 & l_1l_4 & l_1l_5&l_2l_3&l_2l_4&l_2l_5&l_3l_4&l_3l_5&l_4l_5 \cr 
1&1&1&1&1&1&1&1&1&1&1\cr
u_1&-2&-1&0&0&-3&-2&0&-1&-1&0\cr
u_2&0&-1&-2&0&-1&-2&-2&-3&-1&-2\cr
u_1^2&0&0&0&0&2&0&0&0&0&0\cr
u_1u_2&0&0&0&0&2&4&0&2&0&0\cr
u_2^2&0&0&0&0&0&0&0&2&0&0}.$$
Its rank equals $6$ and a non-vanishing maximal minor consists of the columns with numbers $\cS=\{1,3,4,5,6,8\}$. The corresponding submatrix $\widetilde{Mat}_S(\cS)$ equals 
$$\widetilde{Mat}_S(\cS)=\bordermatrix{~ & l_1l_2  & l_1l_4 & l_1l_5&l_2l_3&l_2l_4&l_3l_4\cr 
1&1&1&1&1&1&1\cr
u_1&-2&0&0&-3&-2&-1\cr
u_2&0&-2&0&-1&-2&-3&\cr
u_1^2&0&0&0&2&0&0\cr
u_1u_2&0&0&0&2&4&2\cr
u_2^2&0&0&0&0&0&2}. $$
Further, 
$$4\widetilde{Mat}_S^{-1}(\cS)=\bordermatrix{~ & 1  & u_1 & u_2&u_1^2&u_1u_2&u_2^2\cr 
l_1l_2&0&-2&0&-2&-1&0\cr
l_1l_4&0&0&-2&0&-1&-2\cr
l_1l_5&4&2&2&1&1&1&\cr
l_2l_3&0&0&0&2&0&0\cr
l_2l_4&0&0&0&-1&1&-1&\cr
l_3l_4&0&0&0&0&0&2}.$$
Thus, given an arbitrary  rational function $R(u_1,u_2)=P(u_1,u_2)/\Phi_S(u_1,u_2)$, where  
$P(u_1,u_2)=a_{00}+a_{1,0}u_1+a_{0,1}u_2+a_{2,0}u_1^2+a_{11}u_1u_2+a_{02}u_2^2$ is any 
polynomial  of degree at most $2$ and $\Phi_S(u_1,u_2)=l_1l_2l_3l_4l_5$, we obtain 
$$\begin{cases}
w_{345}=-\frac{1}{4}(2a_{10}+2a_{20}+a_{11})\\
w_{235}=-\frac{1}{4}((2a_{01}+a_{11}+2a_{02})\\
w_{234}=4a_{00}+2a_{01}+2a_{10}+a_{20}+a_{11}+a_{02}\\
w_{145}=2a_{20}\\
w_{135}=-a_{10}+a_{11}-a_{20}\\
w_{125}=2a_{02}.\\
\end{cases}
$$
Notice that  triangles $\De_{125}, \De_{135}, \De_{145}, \De_{234}$ are degenerate. If we know that the original measure we are  recovering is polygonal then one should get $w_{125}=w_{135}=w_{145}=w_{234}=0$.   Therefore, the linear space of numerators $P(u_1,u_2)$ for the space 
$\F^\De(S)$ in this example is given by the system of equations:  
$$\begin{cases}
a_{20}=0\\
a_{10}=a_{11}\\
a_{02}=0\\
4a_{00}+2a_{01}+3a_{10}=0.\\
\end{cases}
$$
\end{example}
%Observe that in both Examples \ref{ex:three} and \ref{ex:four}  the rows corresponding to the degenerate triangles must necessarily be included in any basis of the column space. 

\section{Remarks and open problems}\label{s5}
%{\bf 1.}
\begin{rema+}
A weaker form of Corollary~\ref{cor:arbit} (i.e., the rationality of
$F_{\cP}^\rho(\U)$, but without the claim on the particular
shape of the denominator) can be derived directly from \eqref{eq:Mainint}
by using Stokes formula, along the lines of \cite{Bar2}*{Lemma~1}.
\end{rema+}

\begin{problem}
%{\bf 2.}  {\bf Problem 2.}  
Find an appropriate version of Theorem~\ref{th:weight1},
applicable to non-simple and/or non-convex polytopes. 
\end{problem}

%{\bf 3.} 
\begin{rema+}
%Let us introduce a standard $\bZ$-lattice in $\M^\Delta(S)$.
Choose an arbitrary basis $\{\Delta_j\}$
of $\M^\Delta(S)$ consisting of the standard measures of simplices.  
The set $\{\Delta_j\}$
spans an integer lattice in $\M^\Delta(S)$. 
(One can easily see that this lattice is invariantly defined independently of the choice of a 
basis of standard measures of simplices.) Denote by $\M_\bZ^\Delta(S)$ the space $\M^\Delta(S)$ with the latter lattice. We can prove the following. 
\begin{proposition}\label{prop:rat}
Any generalized polytope $\cP\in \cP(S)$ with  standard measure $\mu_\cP$
corresponds to a rational point in  $\M^\Delta_\bZ(S)$.
\end{proposition}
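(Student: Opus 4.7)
The plan is to enlarge $S$ to an auxiliary finite set $S'$ in which $\mu_\cP$ is manifestly an integer combination of standard measures of simplices, and then to use an elementary lattice argument to descend the rationality back to $S$.

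First I would pick any triangulation $T=\{\tau_1,\ldots,\tau_M\}$ of $\cP$ into closed $d$-dimensional simplices, which exists by the remark that any generalized polytope is the underlying space of a pure simplicial complex. Let $S'$ be the union of $S$ with the set of all vertices of the $\tau_i$; this is a finite spanning set with $S\subseteq S'$. Since $\mu_\cP=\sum_i \mu_{\tau_i}$ and each $\tau_i$ is a $d$-simplex with vertices in $S'$, we obtain immediately that $\mu_\cP\in\M^\Delta_\bZ(S')$. Here I would rely on the invariance statement from the remark preceding the proposition which, combined with Lemma~\ref{lm:simplex}, guarantees that $\M^\Delta_\bZ(S')$ coincides with the $\bZ$-span of \emph{all} standard measures of $d$-simplices with vertices in $S'$ and not merely those of a fixed basis.

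Next, Conjecture~\ref{conj:main} applied to $S$ gives $\mu_\cP\in\M^\Delta(S)$, so setting $\Lambda:=\M^\Delta(S)\cap\M^\Delta_\bZ(S')$ we have $\mu_\cP\in\Lambda$. Because $S\subseteq S'$, the space $\M^\Delta(S)$ embeds as an $\bR$-subspace of $\M^\Delta(S')$, and $\M^\Delta_\bZ(S')$ is a lattice in $\M^\Delta(S')$. Consequently $\Lambda$ is the intersection of a subspace with a lattice, hence a discrete subgroup of $\M^\Delta(S)$, i.e. itself a lattice. Moreover $\M^\Delta_\bZ(S)\subseteq\Lambda$, since any simplex with vertices in $S$ is also one with vertices in $S'$, so both $\M^\Delta_\bZ(S)$ and $\Lambda$ have rank equal to $\dim\M^\Delta(S)$.

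The final step is the elementary observation that any two full-rank sublattices $\Lambda_1\subseteq\Lambda_2$ of a finite-dimensional real vector space share the same rational span: the quotient $\Lambda_2/\Lambda_1$ is a finite abelian group of some order $n$, so $n\Lambda_2\subseteq\Lambda_1$, and therefore $\Lambda_1\otimes\mathbb{Q}=\Lambda_2\otimes\mathbb{Q}$. Applied to $\M^\Delta_\bZ(S)\subseteq\Lambda$, this yields $\mu_\cP\in\Lambda\subseteq\M^\Delta_\bZ(S)\otimes\mathbb{Q}$, which is precisely the required statement that $\mu_\cP$ is a rational point of $\M^\Delta_\bZ(S)$. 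The main potential obstacle I foresee is the invariance of $\M^\Delta_\bZ(S)$ and $\M^\Delta_\bZ(S')$ under change of simplex basis, but this is exactly what the parenthetical in the remark preceding the proposition asserts and it follows from Lemma~\ref{lm:simplex}; no input beyond Conjecture~\ref{conj:main} and this invariance is needed.
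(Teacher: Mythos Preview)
Your argument is correct and follows essentially the same strategy as the paper's proof: enlarge $S$ to a finite set in which $\mu_\cP$ is visibly an integer lattice point, then use the elementary fact that a full-rank sublattice has the same rational span as the ambient lattice to descend to $\M^\Delta_\bZ(S)$. The only difference is cosmetic: the paper chooses the canonical enlargement $\tilde S$ consisting of $S$ together with all vertices of the hyperplane arrangement $H(S)$ (arguing that $\cP$ is a union of chambers of $H(S)$, each of which can be triangulated on $\tilde S$), whereas you take the more ad hoc $S'$ arising from an arbitrary triangulation of $\cP$; your choice is arguably more direct since it avoids the chamber decomposition step.
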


\begin{proof} 
As  Conjecture~\ref{conj:main}  follows from \cite{ABR17}*{Theorem~1},
one has $\mu_\cP\in\M^\Delta(S)$.
One can easily show that $\cP$ can be 
represented as the union of the closures of connected components of 
$\bR^d\setminus H(S)$, where $H(S)$ is the hyperplane arrangement consisting of all hyperplanes spanned by 
$d$-tuples of points in $S$. (The converse is obviously not true.) 
Let $\tilde S\supseteq S$ be the {\it extended set of vertices} obtained by adding to $S$ 
all vertices of the hyperplane arrangement $H(S)$. Since each connected component in 
$\bR^d\setminus H(S)$ is convex, it can be triangulated on $\tilde S$. 
Consider the space $\M^\Delta_\bZ(\tilde S)$. Obviously, 
$\mu_\cP$ is an integer point in   $\M^\Delta_\bZ(\tilde S)$.  
Also, $\M^\Delta_\bZ(S)$ is contained in $\M^\Delta_\bZ(\tilde S)$ as a sublattice.  
Thus, if $\mu_\cP$ belongs to $\M^\Delta_\bZ(S)$ it is a rational point there. 
\end{proof}
\end{rema+}

\begin{problem}
One can also define a rational convex cone $\mathfrak{Pos}(S)\subset \M_\bZ(S)$ by taking non-negative linear combinations of all $\mu_\cP$, where $\cP$ runs over the set of all generalized polytopes in $\cP(S)$.  
 \begin{conjecture}\label{conj:cone} The rational cone $\mathfrak{Pos}(S)$ is uniquely determined by the oriented matroid associated to $S$. 
 \end{conjecture}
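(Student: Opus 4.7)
The approach is to reduce the entire proof to the combinatorics of the hyperplane arrangement $H(S)$ spanned by all $d$-subsets of $S$. This arrangement, its set of bounded $d$-dimensional chambers $\mathcal{C}(S)\subset\conv(S)$, and all sign data between its hyperplanes and the points of $S$ are fully encoded by the oriented matroid of $S$, so any quantity I express purely in terms of chamber combinatorics will automatically be an oriented-matroid invariant.

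First, I would introduce the \emph{density map} $\Psi:\M(S)\to\bR^{\mathcal{C}(S)}$ sending each measure to the tuple of its constant-density values on the chambers of $H(S)$. Since every $\mu\in\M(S)$ is a linear combination of characteristic functions of unions of chambers (because each $\cP\in\cP(S)$ has boundary contained in $H(S)$: its faces lie in hyperplanes spanned by subsets of its vertices in $S$), the map $\Psi$ is well-defined, linear, and injective. Under $\Psi$, a simplex measure $\mu_\Delta$ goes to the $0/1$ incidence vector $v_\Delta=([c\subseteq\Delta])_c$, and a generalized-polytope measure $\mu_\cP$ to $v_\cP=([c\subseteq\cP])_c$.

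Second, I would observe that each entry $[c\subseteq\Delta]$ is the sign pattern of the barycentric coordinates of any interior point of $c$ with respect to the vertices of $\Delta$, i.e.\ a chirotope value; hence the entire sublattice $\Psi(\M_\bZ(S))$ spanned by the $v_\Delta$ is combinatorially specified. Third, I would verify that the set of generators $\{v_\cP:\cP\in\cP(S)\}$ is itself combinatorial: a subset $I\subseteq\mathcal{C}(S)$ yields $\cP=\overline{\bigcup_{c\in I}c}\in\cP(S)$ precisely when $\Verti(\cP)\subseteq S$, and by Lemma~\ref{lm:inv} being a vertex of $\cP$ at a point $\bv$ amounts to the tangent cone $T_\bv(\cP)$ not admitting a decomposition into convex polytopal subcones with translation-invariant directions. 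Near $\bv$ this tangent cone depends only on the local arrangement of hyperplanes of $H(S)$ through $\bv$ together with the chambers of $I$ adjacent to $\bv$, so the condition can be read off the oriented matroid. Combining the three ingredients, $\Psi(\mathfrak{Pos}(S))$ is the rational cone in $\Psi(\M_\bZ(S))$ generated by combinatorially determined integer vectors, and pulling back by $\Psi$ yields the conjecture.

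The main obstacle is the local-to-global step in the third ingredient. Rigorously showing that the existence of a \emph{translation-invariant subcone decomposition} of $T_\bv(\cP)$ depends only on the sign-vector data of the local arrangement at $\bv$ requires building a local oriented matroid at every candidate vertex $\bv$, including spurious candidates lying at intersection points of $H(S)$ outside $S$ (since their being true vertices would disqualify $\cP$ from $\cP(S)$). One then needs an invariance theorem for tangent-cone dissections at the combinatorial level of these local oriented matroids. This combinatorial invariance theorem for tangent-cone dissections is where the real substance of the proof will lie.
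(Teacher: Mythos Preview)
The statement you are attempting to prove is labeled \emph{Conjecture}~\ref{conj:cone} in the paper and appears in the final section on open problems. The paper offers \emph{no} proof of it; it is posed as an open question. There is therefore nothing in the paper to compare your argument against.

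That said, your outline has a genuine gap at the very first step, where you assert that the hyperplane arrangement $H(S)$ and its chamber complex $\mathcal{C}(S)$ are ``fully encoded by the oriented matroid of $S$.'' This is false in general. The oriented matroid of $S$ records only the sign of each $(d+1)\times(d+1)$ minor of the homogenized coordinates of $S$, i.e.\ the relative position of each point of $S$ with respect to each hyperplane of $H(S)$. It does \emph{not} record the incidence pattern among the hyperplanes of $H(S)$ themselves, and hence not the face lattice of the arrangement. A concrete counterexample: take $S$ to be the six vertices of a convex hexagon in $\bR^2$. All such configurations share the same oriented matroid (the cyclic one on six elements of rank~$3$). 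Yet the three long diagonals are concurrent for a centrally symmetric hexagon and form a small triangle otherwise, so the chamber complex inside $\conv(S)$ differs between these two realizations. Your map $\Psi:\M(S)\to\bR^{\mathcal{C}(S)}$ thus lands in target spaces of different dimensions for oriented-matroid-equivalent $S$, and the entire reduction collapses.

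This does not disprove the conjecture---in the hexagon example one can check directly that the extra chamber created by non-concurrent diagonals carries no new information about $\mathfrak{Pos}(S)$, since no generalized polytope on $S$ can have a vertex at the spurious triple point---but it does mean your density-map framework cannot serve as the ambient combinatorial invariant. Any correct approach would have to work either intrinsically in $\M^\Delta(S)$ (whose simplex basis and the incidence relations among simplex supports \emph{are} oriented-matroid data) or pass to a coarser quotient of the chamber space that is provably invariant. You correctly flag the tangent-cone step as difficult, but the obstruction appears earlier than you locate it.
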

\end{problem}

We conclude this section with the following question. 
\begin{problem}\label{prob:ext} Is it possible to describe the extremal rays of $\mathfrak{Pos}(S)$? 
\end{problem}
One can easily show  that a simplex from $\cP(S)$ spans an extremal ray of $\mathfrak{Pos}(S)$ if and only if it does not contain any points of $S$ distinct from its vertices.  
Problem~\ref{prob:ext} is apparently closely related to the problem of classification of combinatorial types of point arrangements, see e.g. \cite{FuMiMo} and references therein.  

\subsection{Hyperplane arrangements and Laplace transform}\label{subs:arr}
After the first version \cite{GPSS12}
of this text was released, it was pointed out to us
that Laplace transform technique developed for studying hyperplane
arrangements in \cite{BrVe99} (see also \cite{DeCoPro11}*{Sect.~II.8}) and
the corresponding knowledge accummulated simplify and strengthen a number of
our results. Here we sketch the key ideas, leaving full details for another
publication.

Let $\cP$ be a generalized polytope.
It is natural to homogenize its normalized moment
generating function---the rational function $F^\rho_\cP(\bu)$
from Proposition~\ref{prop:gener}---so that its numerator and
denominator become homogeneous, using an extra variable $\bu_0$. E.g. this allows to account for the
origin appearing as a vertex of $\cP$. It also has a natural interpretation
in terms of measures. Namely, embed $\cP$ in the hyperplane $\{\bu \mid \bu_0=1\}$, and consider the cone spanned by $\cP$;
equip this cone with exponentially decaying in the direction $\bu_0$ measure.
Then the Laplace transform of this measure 
is the homogenization $\tilde{F}_\cP(\tilde{\bu})$ of $F^\rho_\cP(\bu)$,
where we denoted $\tilde{\bu}:=(\bu_0,\bu_1,\dots,\bu_d)$.

Note that the denominator of $\tilde{F}_\cP(\tilde{\bu})$ is the product
of powers of linear forms $\ell_\bv(\tilde{\bu})$, with $\bv\in\Verti(\cP)$.
The paper \cite{BrVe99} associates to the hyperplane arrangement
specified by the corresponding hyperplanes the algebra of rational functions
generated by the reciprocials of the $\ell_\bv$, endowed
with the natural action of the polynomial differential operators.
Then it proceeds to show that
$\tilde{F}_\cP(\tilde{\bu})$ admits a decomposition into a sum of
$\tilde{F}_\Delta(\tilde{\bu})$, with $\Delta$ ranging through $d$-simplices
with vertices in $\Verti(\cP)$,
whenever $\tilde{F}(\tilde{\bu})$ corresponds to a non-singular
polynomial density measure.
This in particular implies
Conjecture~\ref{conj:main}, and much more.

\bibliography{poly_gfunc,poly}
\end{document}